\documentclass[letterpaper,12pt]{article}

\usepackage[english,activeacute]{babel}
\usepackage{amsmath,amsthm}
\usepackage{amscd}
\usepackage{mathtools}
\usepackage{amssymb,latexsym}
\usepackage{enumerate, appendix}
\usepackage{url}
\usepackage[all]{xy}
\usepackage[mathscr]{euscript}
\usepackage{hyperref}
\usepackage{dcolumn}
\usepackage{amssymb}
\usepackage[utf8]{inputenc}
\usepackage{graphicx}
\usepackage{color}
\usepackage[all]{xy}
\usepackage{tikz}
\usepackage{authblk}
\usepackage{blindtext}
\usepackage{lmodern}
\usepackage{anyfontsize}

\setlength{\topmargin}{-20mm}
\setlength{\oddsidemargin}{5mm}
\setlength{\evensidemargin}{-5mm}
\setlength{\textheight}{230mm}
\setlength{\textwidth}{170mm}

\makeatletter
\newcommand{\subjclass}[2][2010]{%
  \let\@oldtitle\@title%
  \gdef\@title{\@oldtitle\footnotetext{#1 \emph{Mathematics subject classification:} #2}}%
}
\newcommand{\keywords}[1]{%
  \let\@@oldtitle\@title%
  \gdef\@title{\@@oldtitle\footnotetext{\emph{Key words and phrases:} #1.}}%
}
\newcommand{\emails}[1]{%
  \let\@@@oldtitle\@title%
  \gdef\@title{\@@@oldtitle\footnotetext{\emph{E-mail addresses:} #1.}}%
}
\makeatother


\newcommand*\circled[1]{\tikz[baseline=(char.base)]{
            \node[shape=circle,draw,inner sep=0.4pt] (char) {#1};}}

\newcommand{\Ss}{{\ensuremath{\mathbb{S}^{1}}}}

\newcommand{\PP}{{\ensuremath{\mathbb{P}}}}
\newcommand{\BB}{{\ensuremath{\mathbb{B}}}}

\newcommand{\F}{\ensuremath{\mathbb{F}}}
\newcommand{\HH}{\ensuremath{\mathbb{H}}}

\newcommand{\Z}{\ensuremath{\mathbb{Z}}}
\newcommand{\Fp}{\ensuremath{\mathbb{F}_{p}}}
\newcommand{\FF}{\ensuremath{\mathbb{F}_{2}}}

\newcounter{numero}

\newcounter{letra}

\newcounter{romnumero}

\newcounter{bibnumero}

\theoremstyle{plain}
   \newtheorem*{teo*}{Theorem}
   \newtheorem*{pro*}{Proposition}
   \newtheorem{pro}{Proposition}[section]
   \newtheorem{teo}[pro]{Theorem}
   \newtheorem{lem}[pro]{Lemma}
   \newtheorem{cor}[pro]{Corollary}
 \theoremstyle{definition}
    \newtheorem{defi}{Definition}[section]
    \newtheorem{exa}{Example}[section]
\theoremstyle{remark}
\newtheorem{remk}{Remark}


\title{\bf A BV-algebra Structure on Hochschild Cohomology of the Group Ring of Finitely Generated Abelian Groups} 

\author[1]{Andrés Angel\footnote{A. Angel is supported in part by the FAPA funds from Vicerrectoría de Investigaciones de la Universidad de los Andes}}
\author[1]{Diego Duarte\footnote{D. Duarte is supported by Fondo de Investigaciones de la Facultad de Ciencias de la Universidad de los Andes. Convocatoria 2017-I para la financiación de proyectos de investigación categoría estudiantes de doctorado candidatos.}}
\affil[1]{Department of Mathematics, Universidad de los Andes, Bogotá, Colombia\\

Carrera 1 No 18A - 12}
\emails{ja.angel908@uniandes.edu.co (A. Angel), dd.duarte22@uniandes.edu.co (D. Duarte)}
\subjclass{16E40, 16S34, 55U25.}
\keywords{Hochschild cohomology, group ring, finitely generated abelian group, Batalin-Vilkovisky structure}
\date{\small April 9, 2017}

\begin{document}

\maketitle

\begin{abstract}
We study a Batalin-Vilkovisky algebra structure on the Hochschild cohomology of the group ring of finitely generated abelian groups. The Batalin-Vilkovisky algebra structure for finite abelian groups comes from the fact that the group ring of finite groups is a symmetric algebra, and the Batalin-Vilkovisky algebra structure for free abelian groups of finite rank comes from the fact that its group ring is a Calabi-Yau algebra.  
\end{abstract}

\section{Introduction}

The Hochschild (co)homology of associative algebras has been extensively studied since its first appearance in 1945 with the paper {\it On The Cohomology Groups of an Associative Algebra} by Gerard  Hochschild \cite{Hoch}. There is a rich algebraic structure on the Hochschild cohomology of an associative algebra. It is a graded algebra given by the cup product. In \cite{Ger}, Gerstenhaber proves that the cup product is commutative, and even more that exist a Lie bracket that endows $HH^{*}(A,A)$ with a structure of Lie algebra. These two structures satisfy some compatibility conditions that are now known to define a \textit{Gerstenhaber algebra}. 

In \cite{Tradler}, Tradler proves that if $A$ is a symmetric algebra up to homotopy then $HH^{*}(A,A)$ is a \textit{Batalin-Vilkovisky algebra}. In \cite{M2}, Menichi  presents another proof for Tradler's result for symmetric differential graded algebras. These structures play an important role due to its connection with string topology as can be found in \cite{Bur}, \cite{cohenjonesyan}, \cite{cohenjones}, \cite{FelMenTh}, \cite{FelTh}, \cite{M3}, \cite{Wes1} and \cite{Wes2}.

Given a symmetric algebra, such as a group ring of a finite group, the Batalin-Vilkovisky structure depends on the duality isomorphism, by using different symmetric forms we get different Batalin-Vilkovisky structures with the same underlying Gesternhaber algebra. The Batalin-Vilkovisky algebra structure on the Hochschild cohomology of cyclic groups of prime order over $\F_p$ was calculated by Yang \cite{Yang} using the isomorphism between the group ring and the truncated polynomial ring. However, the symmetric form used on those calculations do not correspond to the canonical form over group rings.  For cyclic groups using the canonical symmetric form, we get  

\begin{teo*}
Let $R$ be an integral domain with $char(R)\nmid n$ and $A=R[\Z/n\Z]$. Then as a BV-algebra 
\begin{align}
	HH^{*}(A;A)&=R [x,z]/(x^{n}-1,nz) \notag \\
		\Delta(a)&=0 \quad \forall a\in HH^{*}(A;A) \notag
\end{align}	
where $|x|=0$ and $|z|=2$. 
\end{teo*}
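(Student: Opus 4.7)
The plan is to identify $HH^*(A, A)$ explicitly via the standard decomposition for group rings of abelian groups, and then conclude $\Delta = 0$ by a degree argument. Since $G = \Z/n\Z$ is a finite abelian group, every conjugacy class is a singleton and $C_G(g) = G$ for all $g$, so the classical decomposition of Hochschild cohomology of a group algebra gives a graded $R$-algebra isomorphism
\[ HH^*(A, A) \cong RG \otimes_R H^*(G, R), \]
where $H^*(G, R)$ denotes group cohomology with trivial coefficients.

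Next I would compute $H^*(G, R)$ using the standard periodic resolution
\[ \cdots \xrightarrow{N} RG \xrightarrow{g-1} RG \xrightarrow{N} RG \xrightarrow{g-1} RG \to R \to 0, \]
with $g$ a generator and $N = 1 + g + \cdots + g^{n-1}$. Applying $\mathrm{Hom}_{RG}(-, R)$ turns $g-1$ into $0$ and $N$ into multiplication by $n$. Because $\mathrm{char}(R) \nmid n$ and $R$ is an integral domain, $n$ is not a zero-divisor in $R$; thus $H^0(G, R) = R$, $H^{2k}(G, R) = R/nR$ for $k \geq 1$, and $H^{\mathrm{odd}}(G, R) = 0$, with ring structure $R[z]/(nz)$, where $z$ is the periodicity class of degree $2$. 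Tensoring with $RG \cong R[x]/(x^n - 1)$ yields
\[ HH^*(A, A) \cong R[x, z]/(x^n - 1, nz), \]
with $|x| = 0$ and $|z| = 2$, matching the stated ring.

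For the BV operator, $A = RG$ is a symmetric $R$-algebra with respect to the canonical trace $\epsilon(\sum a_g g) = a_1$; by Tradler's theorem this endows $HH^*(A, A)$ with a BV-algebra structure whose operator $\Delta$ is of degree $-1$. The preceding computation shows $HH^*(A, A)$ is concentrated in even degrees, so for every $\alpha \in HH^{2k}(A, A)$ one has $\Delta(\alpha) \in HH^{2k-1}(A, A) = 0$; hence $\Delta \equiv 0$ on the nose.

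The main step requiring care is the identification of the ring structure: establishing the abelian-group decomposition as a genuine ring isomorphism (not only as graded $R$-modules) and verifying that the periodicity class $z$ generates $HH^{2k}(A, A)$ as an $(A/nA)$-module for every $k \geq 1$. Once these are in place, the BV-triviality is immediate from the parity of the grading, and no chain-level formula for $\Delta$ needs to be unpacked.
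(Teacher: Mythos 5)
Your argument is correct, and it reaches the crucial point --- that $HH^{*}(A;A)$ is concentrated in even degrees, so the degree $-1$ operator $\Delta$ vanishes identically --- which is exactly the first line of the paper's own proof of Theorem \ref{BVZn} (via Corollary \ref{integraldomain}). Where you differ is in how the ring is identified. The paper works entirely with a $2$-periodic $A^{e}$-projective resolution of $A$ and explicit comparison maps $\psi_{*},\varphi_{*}$ to the bar resolution, computing the cup product by hand (Lemmas \ref{cupeven} and \ref{cupodd}, Theorem \ref{ringeven}); you instead invoke the classical identification $HH^{*}(RG;RG)\cong H^{*}(G;RG^{\mathrm{ad}})\cong RG\otimes H^{*}(G;R)$ for $G$ abelian and compute group cohomology of $\Z/n\Z$ from the standard periodic resolution of $R$ over $RG$. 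Both routes are essentially the induced-up version of the same periodic resolution, but yours is cleaner for this particular theorem, at the cost of the one step you flag yourself: you must justify that the decomposition is an isomorphism of graded \emph{rings} and that the degree-$2$ periodicity class generates the even part multiplicatively. The paper's heavier chain-level machinery is not wasted, however: the explicit cochain representatives of $x$ and $z$ and the chain-level formula for $\Delta$ are reused in the subsequent positive-characteristic and tensor-product computations (Theorems \ref{BVZncharp} and the $\Z[\Z/n\Z]\otimes\Z[\Z/m\Z]$ case), where odd cohomology does not vanish and a pure degree argument is unavailable; the paper even remarks that $\Delta=0$ ``can be proved directly from the definition'' and carries out that verification as a consistency check. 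So your proof is a valid and more economical derivation of this statement, but it would not generalize to the later results without reintroducing the explicit resolutions.
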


\begin{teo*}
Let $R$ be a commutative ring with $char(R)=p>0$ and $A=R[\Z/n\Z]$ with $n=mp$. If $p\neq 2$, or $p=2$ and $m$ is even. Then as a BV-algebra 
\begin{align}
HH^{*}(A;A)&=R [x,y,z]/(x^{n}-1,y^{2}) \notag \\
\Delta (z^{k}y^{r}x^{l}) &= r(l-1)z^{k}x^{l-1} \notag 
\end{align}
If $p=2$ and $m$ is odd. Then as a BV-algebra
\begin{align}
HH^{*}(A;A)&=R [x,y,z]/(x^{n}-1,y^{2}-x^{n-2}z) \notag \\
\Delta (z^{k}y^{r}x^{l}) &= r(l-1)z^{k}x^{l-1} \notag 
\end{align}
where $|x|=0$, $|y|=1$ and $|z|=2$. 
\end{teo*}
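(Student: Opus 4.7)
\medskip

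\textbf{Proof proposal.} The plan is to build on the periodic $A^e$--free resolution of $A=R[\Z/n\Z]\cong R[x]/(x^n-1)$, compute $HH^*(A;A)$ as a module, identify the cup product, and then use the canonical symmetric form on the group ring to import the BV operator from Connes' $B$--operator on Hochschild homology through Tradler's duality.

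First I would use the standard periodic resolution
\[
\cdots \longrightarrow A^e \xrightarrow{\,v\,} A^e \xrightarrow{\,u\,} A^e \xrightarrow{\,v\,} A^e \xrightarrow{\,u\,} A^e \xrightarrow{\,\mu\,} A \longrightarrow 0,
\]
with $u=x\otimes 1-1\otimes x$ and $v=\sum_{i=0}^{n-1} x^i\otimes x^{n-1-i}$. Applying $\mathrm{Hom}_{A^e}(-,A)$ and using that $A$ is commutative gives $u^*=0$, while $v^*(a)=n\,x^{n-1}a=0$ because $p\mid n$. Hence $HH^k(A;A)\cong A$ for every $k\ge 0$, and I would let $x\in HH^0$, $y\in HH^1$, $z\in HH^2$ correspond to $1\in A$ in the respective degrees. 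This gives the underlying graded $R$--module of the stated presentation; the remaining task is to pin down $y^2$ and the BV operator.

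Next I would compute the cup product by constructing an explicit diagonal chain map $\Delta_P\colon P_*\to P_*\otimes_A P_*$ lifting the identity of $A$, where $P_*$ is the periodic resolution above. A direct computation shows $x\cdot y=yx$, $x\cdot z=zx$ and $y\cdot z=zy$, and $z$ behaves as the periodicity element, so products of the form $z^k y^r x^l$ span $HH^*$. The only delicate point is $y\cdot y$: writing $y$ via the odd-degree copy of $A$ in the resolution and using $\Delta_P$ gives a coefficient involving the combinatorial sum $\sum_{i+j=n-2}x^{i+j}=(n-1)x^{n-2}$, whose reduction modulo $p$ is either $0$ (the cases $p\neq 2$, or $p=2$ with $m$ even, since then $n-1\equiv 0\bmod 2$ is not the relevant condition — one actually tracks a sign and a $2$-divisibility) or $-x^{n-2}$ (the case $p=2$, $m$ odd). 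I expect this case split to be the most delicate step at the level of the cup product, and I would verify it through the explicit formula for $\Delta_P$ on the degree-$1$ pieces.

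For the BV part I would invoke Tradler's theorem applied to the canonical symmetric form $\langle a,b\rangle=\varepsilon(ab)$ on $A=R[\Z/n\Z]$, where $\varepsilon$ picks off the coefficient of $1$. This produces an $A$-bimodule isomorphism $A\cong A^\vee$, hence Poincaré duality $HH^*(A;A)\cong HH_*(A;A)$, and identifies $\Delta$ with Connes' operator $B$ transported through this duality. Computing $B$ on the chain-level representatives of $y$ and $yx^l$ in the cyclic bar complex (using the formula $B(a_0[a_1])=1\otimes[a_0a_1]-1\otimes[a_1a_0]+\ldots$ specialized to group-algebra generators) yields $\Delta(y x^l)=(l-1)x^{l-1}$, in agreement with the stated formula for $r=1$.

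Finally, I would extend $\Delta$ to arbitrary elements using the BV seven-term identity and the derivation property of $z\cdot(-)$: since $\Delta$ vanishes on $HH^0$ and on $z$, and $z$ is central and multiplication by $z$ commutes with $\Delta$ (which I would verify by checking $\{z,-\}=0$ on generators), the formula $\Delta(z^k y^r x^l)=r(l-1)z^k x^{l-1}$ follows directly from the $r=1$ case, together with $y^2=0$ (or $y^2=x^{n-2}z$ when $p=2$, $m$ odd, where one must check consistency of $\Delta(y^2)$ with the formula, i.e.\ $\Delta(x^{n-2}z)=0=2(n-3)\cdot x^{n-3}\pmod 2$). The main obstacle I anticipate is the explicit chain-level computation of $B$ on $y$: one must carefully identify the cocycle representing $y\in HH^1$ under the quasi-isomorphism between the periodic resolution and the bar resolution, and it is at this step that the symmetric form enters in a non-trivial way.
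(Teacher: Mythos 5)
Your overall strategy coincides with the paper's: the $2$-periodic $A^e$-resolution, an explicit comparison with the bar resolution to compute the cup product, Tradler's theorem with the canonical symmetric form $\langle a,b\rangle=\varepsilon(ab)$ to realize $\Delta$ as the dual of Connes' $B$-operator, a chain-level evaluation of $\Delta$ on low-degree classes, and propagation to all of $HH^*(A;A)$ via the seven-term BV identity. However, the one step you yourself single out as delicate --- the computation of $y\smile y$ --- is carried out with the wrong combinatorial coefficient, and as written it gives the wrong ring structure in almost every case. The odd-degree comparison map carries a weight $i_1$ coming from $\varphi_1(1\otimes\sigma^{i_1}\otimes 1)=-\sum_{j=0}^{i_1-1}\sigma^{j}\otimes\sigma^{i_1-j-1}$, so the sum produced when $y\smile y$ is pushed back through $\psi^*$ is $-\sum_{i_1=1}^{n-1}i_1=-\tfrac{n(n-1)}{2}$, not the unweighted count $\sum_{i+j=n-2}1=n-1$ that you wrote. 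The difference is decisive: $n-1\equiv -1\pmod p$ never vanishes, so your sum would force $y^2\neq 0$ for every $p$, contradicting $y^2=0$ for $p\neq 2$ and for $p=2$ with $m$ even; whereas $\tfrac{n(n-1)}{2}=m(n-1)$ is divisible by $p$ when $p$ is odd and is congruent to $m$ mod $2$ when $p=2$, which is exactly what produces the stated dichotomy. Your parenthetical remark that ``one actually tracks a sign and a $2$-divisibility'' concedes the point without resolving it, but that divisibility \emph{is} the content of the case split, so it must be nailed down.

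A second, smaller issue: to extend $\Delta$ over powers of $z$ you propose to check $\{z,-\}=0$ on generators, but the seven-term identity expresses $\{z,y\}$ in terms of $\Delta(zy)$, so this is only non-circular if you compute $\{z,y\}$ independently via the Gerstenhaber circle product on cochains. That is feasible but is an extra chain-level computation of the same order as what the paper does instead, namely evaluating $\Delta(zy)$ directly on the representative $\bar\varphi_3^*(1)$ and finding $\Delta(zy)=-zx^{n-1}$ (where the coefficient $2n+1\equiv 1$ uses $p\mid n$); one such degree-$3$ computation is unavoidable before the induction can start. With the corrected $y\smile y$ coefficient and this one additional evaluation supplied, your outline matches the paper's proof.
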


The aim of this paper is to present a Batalin-Vilkovisky algebra structure on the Hochschild cohomology of the group ring of finitely generated abelian groups. In order to achieve this goal, we study the behavior of the Batalin-Vilkovisky structure for tensor products. Over fields in \cite{Tensor}, Le and Zhou prove that the Künneth formula for Hochschild cohomology is an isomorphism of Gerstenhaber algebras if at least one of the algebras is finite dimensional, and if the algebras are symmetric is an isomorphism of Batalin-Vilkovisky algebras. In section 3, we extend their result for a general class of rings. As a particular case over the integers, we get the following new result 

\begin{teo*}
Let $A=\Z[\Z/n\Z]$ and $B=\Z[\Z/m\Z]$ with $n=km$. Then, as a BV-algebra	
\begin{align}
	HH^{*}(A\otimes B;A\otimes B)&= \frac{\Z[x,t,a,b,c]}{(x^n-1, t^m-1, na, mb, mc, c^2)}  \notag \\
\Delta(x^{i}t^{j}a^{l}b^{r}c^{s}) &= sx^{i-1}t^{j}a^{l}b^{r}((i-1)b-jka)  \notag 
\end{align}
in all cases except when $m$ is even and $k$ is odd, in which case we get
\begin{align}
	HH^{*}(A\otimes B;A\otimes B)&= \frac{\Z[x,t,a,b,c]}{(x^n-1, t^m-1, na, mb, mc, c^2-\frac{m}{2}x^{n-2}ab(b+ka))}  \notag \\
\Delta(x^{i}t^{j}a^{l}b^{r}c^{s}) &= sx^{i-1}t^{j}a^{l}b^{r}((i-1)b-jka)  \notag 
\end{align}
where $|x|=|t|=0$, $|a|=|b|=2$ and $|c|=3$.
\end{teo*}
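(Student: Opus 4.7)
My plan is to combine the calculation of $HH^*(A;A)$ and $HH^*(B;B)$ from the first theorem with the Künneth-type formula for Hochschild cohomology developed in Section~3. Since $\mathrm{char}(\Z) = 0$ divides neither $n$ nor $m$, the first theorem yields
\begin{align*}
HH^{*}(A;A) &= \Z[x,a]/(x^{n}-1, na), \\
HH^{*}(B;B) &= \Z[t,b]/(t^{m}-1, mb),
\end{align*}
both with trivial $\Delta$ and $|a|=|b|=2$. First I would extract the additive structure from the cohomological Künneth short exact sequence
\[
0 \to \bigoplus_{p+q=n} HH^{p}(A) \otimes HH^{q}(B) \to HH^{n}(A \otimes B; A \otimes B) \to \bigoplus_{p+q=n+1} \mathrm{Tor}(HH^{p}(A), HH^{q}(B)) \to 0,
\]
which applies because the Hochschild cochain complexes of $A$ and $B$ consist of free abelian groups. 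The tensor summand contributes the subalgebra generated by $x, t, a, b$. A Tor term is nonzero only when both Hochschild degrees are positive and even, and using $\gcd(n,m)=m$, the first such, $\mathrm{Tor}(HH^{2}(A), HH^{2}(B)) \cong (\Z/m)[x,t]/(x^{n}-1, t^{m}-1)$, lands in degree $3$ and produces a new generator $c$ with $mc=0$. All higher Tor summands then appear as products of $c$ with $x, t, a, b$.

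Next, I would pin down the multiplicative and BV structure. The tensor subalgebra inherits its BV structure from the BV enhancement of the Künneth isomorphism in Section~3, giving the relations $x^{n}=1$, $t^{m}=1$, $na=0$, $mb=0$ and $\Delta \equiv 0$ on monomials in $x, t, a, b$. The relation $mc=0$ follows immediately. To determine $c^{2}$ and $\Delta(c)$, I would work at the cochain level with the resolution $P \otimes Q$, where $P$ and $Q$ are the standard period-two resolutions of $A$ and $B$ over their enveloping algebras, with boundaries $x \otimes 1 - 1 \otimes x$ and $\sum_{i+j=n-1} x^{i}\otimes x^{j}$ (and analogously for $B$). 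Applying $\mathrm{Hom}_{(A \otimes B)^{e}}(-, A \otimes B)$ produces a concrete double complex whose total cohomology is $HH^{*}(A \otimes B)$, and the Künneth connecting map places the distinguished cocycle representative of $c$ on the $(2,1)$--$(1,2)$ diagonal, so that one can compute cup products and the Connes operator directly on representatives.

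The main obstacle is the computation of $c^{2}$. Degree $6$ admits no Tor contribution (odd Hochschild cohomology vanishes, so $p+q = 7$ gives zero), hence $c^{2}$ must lie in the tensor summand $\bigoplus_{p+q=6} HH^{p}(A) \otimes HH^{q}(B)$. Forming the cup product of the chosen representative of $c$ with itself in the double complex and rewriting the result in the $a, b$ basis forces a parity analysis: a generic sign cancellation yields $c^{2}=0$, whereas for $m$ even and $k = n/m$ odd a residual term of the form $\tfrac{m}{2} x^{n-2} a b (b+ka)$ survives. Finally, $\Delta$ is read off from the canonical symmetric form on the finite group ring $A \otimes B = \Z[\Z/n\Z \times \Z/m\Z]$. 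Because $\Delta$ already vanishes on the tensor subalgebra, it is pinned down by its action on the classes $c x^{i} t^{j}$, and I would compute these by applying Connes' $B$-operator to the explicit cocycle representative; the two resolutions contribute a shift $i-1$ in $x$ paired with $b$, and a shift $jk$ in $t$ paired with $-a$, and extending multiplicatively yields the stated formula $\Delta(x^{i} t^{j} a^{l} b^{r} c^{s}) = s x^{i-1} t^{j} a^{l} b^{r}((i-1)b - jka)$.
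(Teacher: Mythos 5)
Your overall route coincides with the paper's: the K\"unneth exact sequence together with the computation of $\mathrm{Tor}_1^{\Z}\bigl(HH^{2i}(A),HH^{2j}(B)\bigr)$ gives the additive structure and the degree-three generator $c$ with $mc=0$; an explicit cocycle representative of $c$ in the tensor product of the two-periodic resolutions is used to compute $c^2$; and the BV operator is evaluated on representatives. Your observation that degree $6$ receives no Tor contribution (since $p+q=7$ forces an odd, hence vanishing, factor), so that $c^2$ must lie in the tensor summand, is correct and is a shortcut the paper does not state explicitly. The $c^2$ step itself is only sketched in your proposal, but the method you describe is exactly the paper's: it reduces to the formula for the cup product of two odd-degree classes of a cyclic group, $a\smile b=-\tfrac{(n-1)n}{2}ab\sigma^{n-2}$, applied to the two cross terms of the representative $1\otimes1-kx^{n-1}\otimes t$, and the parity analysis then produces the exceptional case $m$ even, $k$ odd. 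For $\Delta$, the paper works not with the symmetric form on $\Z[\Z/n\Z\times\Z/m\Z]$ directly but with the cochain-level identity $\Delta^{A\otimes B}=\Delta^A\otimes \mathrm{id}+\mathrm{id}\otimes\Delta^B$ and the cochain-level formulas for $\Delta^A$, $\Delta^B$ already obtained in the cyclic case; the two computations are equivalent.

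The one genuine gap is in your final step. You assert that, since $\Delta$ vanishes on the subalgebra generated by $x,t,a,b$, it is ``pinned down by its action on the classes $cx^it^j$'' and then obtained by ``extending multiplicatively.'' But $\Delta$ is not a derivation, so it cannot be extended multiplicatively; the correct propagation mechanism is the seven-term identity (\ref{Dabc}) for $\Delta(uvw)$, which reduces $\Delta$ of a general monomial $x^it^ja^lb^rc$ to $\Delta$ on products of \emph{pairs} of generators. Knowing $\Delta(cx^it^j)$ and the vanishing of $\Delta$ on even classes does not determine $\Delta(ac)$ and $\Delta(bc)$: since $\Delta(a)=0$ one has $\Delta(ac)=a\Delta(c)-\{a,c\}$, and the bracket $\{a,c\}$ is independent data not recoverable from the classes you list. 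The paper therefore computes $\Delta(ac)=-x^{n-1}ab$ and $\Delta(bc)=-x^{n-1}b^2$ explicitly on cochain representatives, verifying in effect that $\{a,c\}=\{b,c\}=0$, before running the induction. Your argument needs these two additional cochain-level computations; once they are supplied, the rest goes through.
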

Notice that the tensor product of the corresponding Hochschild cohomology rings gives a trivial BV-structure. Nevertheless, the Hochschild cohomology of the tensor product gives a highly non-trivial BV-structure.
 
When the algebra is not symmetric but satisfies some sort of Poincaré duality. Ginzburg \cite{Ginz} and Menichi \cite{M2} prove that $HH^*(A;A)$ is also a Batalin-Vilkovisky algebra by transferring the Connes $B$-operator through the isomorphism between Hochschild homology and Hochshild cohomology.  For the tensor product of two such algebras, we prove that if the algebras satisfy some finiteness condition on their resolutions (\ref{isoResAB}), there is also an isomorphism of Batalin-Vilkovisky algebras between the Hochschild cohomology of the tensor product and the tensor product of their cohomologies. In particular, for free abelian groups of finite rank, we have 

\begin{teo*}
As BV-algebras,
\begin{align}
	HH^{*}(R[\Z^{n}]&;R[\Z^{n}])= R[x_1,x_1 ^{-1},\dots,x_n,x_n ^{-1}]\otimes \Lambda(y_1,\dots ,y_n) \notag \\
	\Delta(x_1 ^{i_1}\cdots x_n ^{i_n}y_1 ^{r_1}\cdots y_n ^{r_n}) &= \displaystyle \sum_{k=1} ^{n} (-1)^{^{r_1+\cdots +r_{k-1}}} r_k( i_k-1) x_1 ^{i_1}\cdots x_k^{i_k-1}\cdots x_n ^{i_n}y_1 ^{r_1}\cdots \widehat{y_k ^{r_k}}\cdots y_n ^{r_n}\notag
\end{align}
where $|x_i|=|x_i ^{-1}|=0$ and $|y_i|=1$ for $1\leq i\leq n$.
\end{teo*}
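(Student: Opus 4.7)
The strategy is to reduce to the one-variable case via the Künneth-type isomorphism for BV-algebras established in Section~3. Since the group ring $R[\Z^{n}]$ is canonically isomorphic to $R[\Z]^{\otimes n}$ as an $R$-algebra, once the BV-structure on $HH^{*}(R[\Z];R[\Z])$ is pinned down and the finiteness hypothesis~(\ref{isoResAB}) is verified for $R[\Z]$, the $n$-fold iteration produces the claimed formula essentially formally.

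The base case $n=1$ is a short Koszul computation. The algebra $R[\Z] \cong R[x,x^{-1}]$ admits the length-one bimodule resolution
$$0 \to R[\Z]^{e} \xrightarrow{\,\cdot(x \otimes 1 - 1 \otimes x)\,} R[\Z]^{e} \to R[\Z] \to 0,$$
where $R[\Z]^{e}$ denotes the enveloping algebra $R[\Z]\otimes R[\Z]^{op}$. Because $R[\Z]$ is commutative, the induced differential on $\mathrm{Hom}_{R[\Z]^{e}}(P_{\bullet},R[\Z])$ is identically zero, so $HH^{*}(R[\Z];R[\Z]) \cong R[x,x^{-1}]\otimes \Lambda(y)$ with $|y|=1$. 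The algebra $R[\Z]$ is Calabi-Yau of dimension $1$ with invariant volume form $x^{-1}dx$, and transporting Connes's $B$ through the duality isomorphism identifies $\Delta$ with the divergence relative to this form. A direct calculation yields $\Delta(x^{i})=0$ and $\Delta(x^{i}y)=(i-1)x^{i-1}$, which matches the statement for $n=1$.

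Next, I would apply the Section~3 Künneth theorem inductively. Because the bimodule resolution of $R[\Z]$ has length one, it and all its tensor powers $R[\Z]^{\otimes k}$ satisfy the finiteness condition~(\ref{isoResAB}), so the Künneth isomorphism is one of BV-algebras at every stage. This gives
$$HH^{*}(R[\Z^{n}];R[\Z^{n}]) \cong HH^{*}(R[\Z];R[\Z])^{\otimes n} \cong R[x_{1}^{\pm 1},\dots,x_{n}^{\pm 1}]\otimes \Lambda(y_{1},\dots,y_{n}).$$
The BV-operator on a tensor product of BV-algebras obeys the Leibniz rule $\Delta(a\otimes b)=\Delta(a)\otimes b+(-1)^{|a|}a\otimes \Delta(b)$, so iterating this rule across the $n$ tensor factors and inserting the base-case formula in each slot produces the sum indexed by $k$; the sign $(-1)^{r_{1}+\cdots+r_{k-1}}$ is precisely the Koszul sign from commuting the odd-degree operator $\Delta$ past the odd-degree factors $y_{1}^{r_{1}}\cdots y_{k-1}^{r_{k-1}}$.

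The main obstacle I anticipate is fixing the Calabi-Yau pairing on $R[\Z]$ precisely enough to recover the $(i-1)$, rather than $i$, in the base formula: this $-1$ shift is exactly what the transfer of $B$ through Poincaré duality produces for the invariant form $x^{-1}dx$, and keeping the same normalization compatible across the Künneth BV-isomorphism is what ensures the signs and multiplicities combine correctly. Once the base case is established with the correct normalization, the inductive step is formal.
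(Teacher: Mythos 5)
Your proposal follows essentially the same route as the paper: the base case is handled exactly as in Theorem \ref{BVKZ} (the length-one bimodule resolution of $R[\Z]$, the duality isomorphism of Theorem \ref{HHBVact} with the fundamental class normalized so that $\Delta(yx^{i})=(i-1)x^{i-1}$, which corresponds to the paper's choice $a=t^{-1}$), and the inductive step is the BV-K\"unneth isomorphism of Theorem \ref{BVisoCY} with $\Delta^{A\otimes B}=\Delta^{A}\otimes id + id\otimes\Delta^{B}$. Your emphasis on pinning down the normalization of the volume form to obtain the shift $(i-1)$ rather than $i$ is exactly the point the paper addresses by allowing arbitrary units $a=ut^{k}$ and then fixing $k=-1$.
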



\section{Hochschild (Co)homology}\label{Hoch(co)}

Let $A$ be a $R$-projective $R$-algebra with unit and $R$ be a commutative ring. Denote by $A^{op}$ the opposite algebra of $A$ and by $A^{e}$ the enveloping algebra $A\otimes A^{op}$. Recall that any left and right $A$-module can be considered as a left, or right, $A^{e}$-module. Let $M$ be an $A^{e}$-module.  The \textit{Hochschild homology of $A$ with coefficients in $M$} is 
$$
      HH_{*}(A;M) := Tor_{*} ^{A^{e}}(A;M) 
$$
and the \textit{Hochschild cohomology of $A$ with coefficients in $M$} is
$$      
      HH^{*}(A;M) := Ext^{*} _{A^{e}}(A;M) 
$$

Besides the additive structure, the Hochschild cohomology $HH^{*}(A;A)$ has a graded algebra structure induced from the cup product defined over cochains by 
\begin{equation}\label{usucup}
(f \smile g)(a_1, \dots ,a_{k+j}) =  f (a_1 , \dots , a_{k})g(a_{k+1}, \dots , a_{k+j})
\end{equation}
where $f\in Hom(\bar{A}^k, A)$ and $g\in Hom(\bar{A}^j, A)$.

Since Hochschild cohomology can be computed by using different resolutions. A more general notion of the cup product can be defined as follows. Let $\PP(A)\xrightarrow{\mu}A$ be an $A^{e}$-projective resolution of $A$, and let $\Delta: \PP(A) \rightarrow \PP(A) \underset{A}\otimes \PP(A)$ be a diagonal approximation map, i.e., an $A^{e}$-chain map such that $(\mu\otimes \mu)\circ \Delta=\mu$. If $M$ and $N$ are $A^e$-modules the Hochschild cup product is defined by
\begin{align}
	\smile: HH^{*}(A;M)\otimes HH^{*}(A;N) &\longrightarrow HH^{*}(A;M \underset{A}\otimes N) \notag \\
	\alpha\otimes \beta &\longmapsto (-1)^{|\alpha||\beta|}(\alpha \underset{A}\otimes \beta)\Delta
\end{align}
Notice that if $M=A$ the cup product endows $HH^{*}(A;N)$ with the structure of $HH^{*}(A;A)$-module 
$$
\xymatrix{
	HH^{*}(A;A)\otimes HH^{*}(A;N) \ar[r]^(0.58){\smile} & HH^{*}(A;A \underset{A}\otimes N) \ar[r]^(0.56){\cong} & HH^{*}(A;N)
}
$$   
and if $M=A=N$ the cup product is a product in $HH^{*}(A;A)$
$$
\xymatrix{
	HH^{*}(A;A)\otimes HH^{*}(A;A) \ar[r]^(0.58){\smile} & HH^{*}(A;A \underset{A}\otimes A) \ar[r]^(0.56){\cong} & HH^{*}(A;A)
}
$$  
that will coincide with the one defined over the bar resolution.

\begin{remk}
The diagonal approximation map that recovers the cup product defined on the bar resolution is given by
\begin{align}
 \Delta_{\BB(A)}: \BB(A) &\longrightarrow \BB(A) \underset{A}\otimes \BB(A) \notag \\
 a_{0}\otimes \cdots \otimes a_{n+1} &\longmapsto \sum_{i=0}^{n} a_{0} \otimes \cdots \otimes a_{i}\otimes 1 \underset{A}\otimes 1 \otimes a_{i+1} \otimes \cdots \otimes a_{n+1} 
\end{align}
\end{remk}

\begin{lem}
Let $A$ be a $R$-projective $R$-algebra. Then any Hochschild diagonal approximation map calculates the cup product in $HH^{*}(A;A)$.
\end{lem}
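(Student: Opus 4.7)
The plan is to reduce everything to the comparison theorem for $A^e$-projective resolutions: any two $A^e$-chain maps from a projective complex to an acyclic augmented complex that lift the same augmentation are $A^e$-chain homotopic, and two homotopic diagonal approximations induce cup products differing by a coboundary, hence equal in cohomology.

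The one nontrivial input is that for any $A^e$-projective resolution $\PP(A)\xrightarrow{\mu}A$, the tensor product $\PP(A)\underset{A}\otimes\PP(A)$, augmented by $\mu\underset{A}\otimes\mu:\PP(A)\underset{A}\otimes\PP(A)\to A\underset{A}\otimes A=A$, is itself an acyclic complex of $A^e$-modules. This is where the standing hypothesis that $A$ is $R$-projective enters: it implies that $A^e$-projective modules are $A$-projective, hence $A$-flat, on each side, so tensoring the resolution $\PP(A)\to A$ over $A$ with the flat $A$-complex $\PP(A)$ yields quasi-isomorphisms $\PP(A)\underset{A}\otimes\PP(A)\simeq A\underset{A}\otimes\PP(A)=\PP(A)\simeq A$.

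Now given two $A^e$-projective resolutions $\PP(A)$ and $\PP'(A)$ with diagonal approximations $\Delta_\PP$ and $\Delta_{\PP'}$, pick an $A^e$-chain map $\phi:\PP(A)\to\PP'(A)$ lifting $\mathrm{id}_A$; this induces the standard isomorphism $HH^\ast_\PP(A;A)\cong HH^\ast_{\PP'}(A;A)$. The two $A^e$-chain maps
\[
\Delta_{\PP'}\circ\phi,\qquad (\phi\underset{A}\otimes\phi)\circ\Delta_\PP\ :\ \PP(A)\longrightarrow\PP'(A)\underset{A}\otimes\PP'(A)
\]
both lift $\mathrm{id}_A$, so by the comparison theorem (source projective, target acyclic by the previous step) they are $A^e$-chain homotopic. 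Composing with $\alpha\underset{A}\otimes\beta$ for cocycles $\alpha,\beta\in\mathrm{Hom}_{A^e}(\PP'(A),A)$ yields $\phi^\ast(\alpha\smile_{\PP'}\beta)=(\phi^\ast\alpha)\smile_\PP(\phi^\ast\beta)$ in cohomology, i.e.\ the two cup products coincide under the comparison isomorphism. Specializing to $\PP'(A)=\BB(A)$ and $\Delta_{\PP'}=\Delta_{\BB(A)}$ identifies the cup product defined via any diagonal $\Delta_\PP$ with the standard one on the bar resolution.

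The only genuine subtlety is the flatness step: one must argue carefully that $A^e$-projectivity of $\PP(A)$ passes to $A$-flatness on both sides, which is precisely the place where the hypothesis on $A$ is essential. Everything else is a routine deployment of the comparison theorem and the elementary fact that chain-homotopic lifts induce identical maps on $\mathrm{Ext}$.
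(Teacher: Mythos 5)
Your proposal is correct and takes essentially the same route as the paper: both reduce the statement to the comparison theorem after establishing that $\PP(A)\underset{A}\otimes\PP(A)\to A$ is acyclic, using $R$-projectivity of $A$ to get $A$-projectivity (hence flatness) of the terms of $\PP(A)$ — the paper packages this last step as a degenerate K\"unneth spectral sequence, which is the same flatness argument you make directly.
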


\begin{proof}
Let $\PP(A)\xrightarrow{\mu}A$ be an $A^{e}$-projective resolution of $A$, and let $\Delta: \PP(A) \rightarrow \PP(A) \underset{A}\otimes \PP(A)$ be a diagonal approximation map. We only need to prove that $\PP(A) \underset{A}\otimes \PP(A) \xrightarrow{\mu\otimes \mu}A$ is an $A^{e}$-projective resolution. Since 
\[
	\left(\PP(A) \underset{A}\otimes \PP(A) \right)_{n}=\bigoplus_{i+j=n} P_i\underset{A}\otimes P_j 
\]
and each $P_i$ is $A^{e}$-projective $(P_i\oplus Q\cong \oplus A^{e})$, it suffices to show that $A^{e}\underset{A}\otimes A^{e}$ is $A^{e}$-projective. By hypothesis, $A$ is $R$-projective and $A^{e}\underset{A}\otimes A^{e}\cong A^{e}\otimes A$ as $A^{e}$-modules then $A^{e}\underset{A}\otimes A^{e}$ is $A^{e}$-projective.

Now, to see that the complex is acyclic, notice that each $P_i$ is $A$-projective because $A$ is $R$-projective, and $H_{*}(\PP(A))\cong A$ which is $A$-free then  
\[
	Tor^{A}_{p}(H_s(\PP(A));H_t(\PP(A)))=0 \qquad \forall p\geq 1
\]
and 
\[
	Tor^{A}_{0}(H_s(\PP(A));H_t(\PP(A)))=H_s(\PP)\underset{A}\otimes H_t(\PP)=\begin{cases}
									A\underset{A}\otimes A &\text{if $s=t=0$} \notag\\
									0 &\text{otherwise}
								\end{cases}
\]
Applying the K\"{u}nneth spectral sequence, we get 
$$
H_{*}(\PP\otimes_{A}\PP)\cong A\underset{A}\otimes A\cong A
$$
Since $\PP(A)\xrightarrow{\mu}A$ and $\PP(A) \underset{A}\otimes \PP(A)\xrightarrow{\mu\otimes \mu}A$ are both $A^{e}$-projective resolutions of $A$, by the comparison theorem, $\Delta: \PP(A) \rightarrow \PP(A) \underset{A}\otimes \PP(A)$ exists and it is unique up to homotopy. Therefore, the usual cup product given by the bar resolution (\ref{usucup}) coincides with any other cup product given by different resolutions and diagonal approximation maps.
\end{proof}

Recall that $HH^*(A;A)$ acts on $HH_*(A;A)$. For $n\geq m$, $f\in Hom(\bar{A}^m, A)$ and $a_1\otimes \cdots \otimes a_{n}\otimes a\in \bar{A}^n\otimes A$ the action is given by
$$
	(a_1\otimes \cdots \otimes a_{n}\otimes a)\cdot f= (-1)^{nm} a_{m+1}\otimes \cdots \otimes a_n \otimes a f(a_1\otimes \cdots \otimes a_{m}) 
$$
	
This action can be calculated over any resolution as follows

\begin{pro}\label{actionHH}
Let $A$ be a $R$-projective $R$-algebra and $\Delta$ be any diagonal approximation map. The action of Hochschild cohomology on Hochschild homology is given by
\begin{align}
\rho: HH_{n}(A;A)\otimes HH^m(A;A)&\longrightarrow HH_{n-m}(A;A) \notag \\
		(x \underset{A^e}\otimes a)\otimes f&\longmapsto (-1)^{nm} (f\underset{A}\otimes id)\Delta(x)\underset{A^e}\otimes a \notag 
\end{align}
\end{pro}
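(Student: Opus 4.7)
The plan is to proceed analogously to the preceding lemma: extend the chain-level formula from the bar resolution to an arbitrary $A^e$-projective resolution by invoking the comparison theorem. Concretely, the formula
\[
\rho(x\underset{A^e}\otimes a, f) = (-1)^{nm}(f\underset{A}\otimes id)\Delta(x)\underset{A^e}\otimes a
\]
defines a pairing at the chain level for any $A^e$-projective resolution $\PP(A)\xrightarrow{\mu} A$ and any diagonal approximation $\Delta$. To prove the proposition I have to verify three things: (i) this pairing is a chain map and descends to a bilinear map on (co)homology; (ii) it is independent of the choice of $\Delta$; and (iii) when $\PP(A)=\BB(A)$ and $\Delta=\Delta_{\BB(A)}$ it recovers the standard action displayed just above the statement.

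First I would check well-definedness. Using that $\Delta$ is an $A^e$-chain map together with the Leibniz rule for the differential on $\PP(A)\underset{A}\otimes \PP(A)$, a direct computation yields $\partial\rho(x\otimes a,f)=\rho(\partial(x\otimes a),f)\pm \rho(x\otimes a,\delta f)$, so cycles paired with cocycles map to cycles modulo boundaries, and $\rho$ descends to a well-defined map $HH_n(A;A)\otimes HH^m(A;A)\to HH_{n-m}(A;A)$. Independence of $\Delta$ is then an immediate consequence of the preceding lemma: it shows that $\PP(A)\underset{A}\otimes \PP(A)\xrightarrow{\mu\otimes\mu} A$ is itself an $A^e$-projective resolution, hence any two diagonal approximations lifting $id_A$ are $A^e$-chain homotopic, and such a homotopy produces a chain homotopy between the two versions of $\rho$.

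Second, I would verify the bar case by direct calculation. For a cochain $f\in Hom(\bar{A}^m,A)$, the element $(f\underset{A}\otimes id)\Delta_{\BB(A)}(a_0\otimes a_1\otimes\cdots\otimes a_n\otimes a_{n+1})$ is extracted from the explicit sum defining $\Delta_{\BB(A)}$; only the $i=m$ summand can contribute because $f$ vanishes on tensor words of length different from $m$, and after applying $f$ and contracting the middle $A$-factor one obtains, up to the sign $(-1)^{nm}$, exactly the expression in the standard action formula recalled above the proposition.

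Finally, for a general resolution $\PP(A)$, the comparison theorem produces an $A^e$-chain equivalence $\phi\colon\BB(A)\to \PP(A)$ lifting $id_A$. Both $(\phi\underset{A}\otimes\phi)\circ\Delta_{\BB(A)}$ and $\Delta\circ\phi$ are $A^e$-chain maps $\BB(A)\to \PP(A)\underset{A}\otimes \PP(A)$ lifting $id_A\colon A\to A\underset{A}\otimes A$ through the resolution of the preceding lemma, so they are $A^e$-chain homotopic. Pushing this homotopy through the functors $A\underset{A^e}\otimes(-)$ and $Hom_{A^e}(-,A)$ identifies $\rho$ computed with $(\PP(A),\Delta)$ with $\rho$ computed on the bar resolution, which is the standard action. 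The main obstacle will be the careful sign bookkeeping in the Leibniz computation and in transporting the chain homotopy through the two functors; this is the same Gerstenhaber-style sign tracking used in the preceding lemma and carries no new conceptual difficulty.
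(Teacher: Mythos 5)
Your proposal follows essentially the same route as the paper: well-definedness because $(f\underset{A}\otimes id)\Delta$ is a chain map, reduction to the bar resolution via uniqueness of diagonal approximations up to homotopy, and the explicit check that only the $i=m$ summand of $\Delta_{\BB(A)}$ survives, recovering the standard action with the sign $(-1)^{nm}$. You spell out the comparison-theorem and homotopy-transport steps in more detail than the paper does, but the content is the same.
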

 
\begin{proof}
Notice that $f$ is a cochain iff the map $f:\PP(A)\rightarrow A$ is a chain map. Then $\rho$ is well defined because $(f\underset{A}\otimes id)\Delta:\PP(A)\rightarrow \PP(A)$ is a chain map. Since any approximation map is unique up to homotopy, it is sufficient to prove that the formula coincided with the one given for the bar resolution. Let $1\otimes a_1\otimes \cdots \otimes a_n\otimes 1\underset{A^e}\otimes a\in B_{n}(A)$ and $f\in Hom_{A^e}(B_m(A),A)$
\begin{align}
& (-1)^{nm} (f\underset{A}\otimes id)\Delta(1\otimes a_1\otimes \cdots \otimes a_n\otimes 1)\underset{A^e}\otimes a \notag \\
&=(-1)^{nm} (f\underset{A}\otimes id) \left( \sum_{i=0}^{n} 1 \otimes a_1\otimes \cdots \otimes a_{i}\otimes 1 \underset{A}\otimes 1 \otimes a_{i+1} \otimes \cdots \otimes a_{n} \otimes 1 \right) \underset{A^e}\otimes a\notag \\
&=(-1)^{nm} f(1 \otimes a_1\otimes \cdots \otimes a_{m}\otimes 1 ) \underset{A}\otimes 1 \otimes a_{m+1} \otimes \cdots \otimes a_{n} \otimes 1 \underset{A^e}\otimes a \notag 
\end{align}
\end{proof} 
 
In \cite{Ger}, Gerstenhaber proves that the cup product on Hochschild cohomology is graded commutative and that there exists a Lie bracket that endows  $HH^{*}(A;A)$ with a structure of Lie algebra. The Gerstenhaber bracket on $HH^{*}(A;A)$ using the bar resolution is defined as follows
\[
	\lbrace f, g \rbrace = f \circ g - (-1)^{(|f|-1)(|g|-1)} g \circ f
\]
where $\circ$ is defined by
\begin{align}\label{Bracket}
(f \circ g)(a_1\otimes \cdots \otimes a_{k+j-1}) &= \notag \\
\sum_{i=1}^{k}(-1)^{(j-1)(i-1)} f (a_1\otimes \cdots &\otimes a_{i-1}\otimes g(a_i\otimes \cdots \otimes a_{i+j-1})\otimes a_{i+j} \otimes \dots \otimes a_{k+j-1}) \notag
\end{align}

The cup product and the bracket satisfy the following compatibility conditions.
 
\begin{defi}\label{DefGersalg}
A \textit{Gerstenhaber algebra} is a graded commutative algebra $A$ with a linear map $\left\{-,-\right\}: A_{i} \otimes A_{j}\rightarrow A_{i+j-1}$ of degree $-1$ such that
\begin{enumerate}
		\item The bracket $\left\{-,-\right\}$ endows $A$ with a structure of graded Lie algebra of degree $1$, i.e., for all $a,b$ and $c\in A$
					\begin{align}
								&\left\{a,b\right\}=-(-1)^{(\left|a\right|+1)(\left|b\right|+1)}\left\{b,a\right\} \notag \\
							&\left\{a,\left\{b,c\right\}\right\}=\left\{\left\{a,b\right\},c\right\}+(-1)^{(\left|a\right|+1)(\left|b\right|+1)}\left\{b,\left\{a,c\right\}\right\} \notag
					\end{align}
		\item The product and the Lie bracket satisfy the Poisson identity, i.e., for all $a,b$ and $c\in A$
		\[
					\left\{a,bc\right\}=\left\{a,b\right\}c+(-1)^{(\left|a\right|+1)\left|b\right|}b\left\{a,c\right\}
		\]
\end{enumerate}
\end{defi}

If there is a differential of degree $-1$ of a Gerstenhaber algebra such that the Gerstenhaber bracket is the obstruction of the operator to be a graded derivation, then the Gerstenhaber algebra is called a Batalin-Vilkovisky algebra. 

\begin{defi}\label{DefBValg}
A \textit{Batalin-Vilkovisky algebra} is a Gerstenhaber algebra $A$ with a linear map of degree $-1$, $\Delta: A_{i} \rightarrow A_{i-1}$ such that $\Delta \circ \Delta=0$ and 
\[
		\left\{a,b\right\}=-(-1)^{\left|a\right|}(\Delta(ab)-\Delta(a)b-(-1)^{\left|a\right|}a\Delta(b))
\]
for all $a$ and $b\in A$.
\end{defi}

The way to construct BV-structures on Hochschild cohomology is by dualizing or transferring the Connes $B$-operator. 

\begin{defi}\label{BConnesHH}
Let $A$ be a unital algebra. The {\it Connes $B$-operator} is a map on Hochschild homology defined on normalized chains as follows 
\begin{align}
&B_n: \bar{A}^n \otimes A \longrightarrow \bar{A}^{n+1} \otimes A \notag \\
&B_n(a_1\otimes \cdots \otimes a_{n}\otimes a)= \sum^{n}_{i=0} (-1)^{in} a_i\otimes \cdots \otimes a_{n}\otimes a\otimes a_1 \otimes \cdots \otimes a_{i-1}\otimes 1
\end{align} 
\end{defi}

The dual of this operator 
$$
B^{\vee} : Hom(\bar{A}^{*+1}\otimes A, R)\rightarrow Hom(\bar{A}^{*}\otimes A,R)
$$ 
defines by adjunction an operator on $Hom(\bar{A}^{*},A^{\vee})\cong Hom(\bar{A}^{*}\otimes A,R)$, where $A^{\vee}=Hom(A,R)$ When $A$ is a symmetric algebra the non-degenerate bilinear form of $A$ induces a chain complex isomorphism
\[
	Hom(\bar{A}^{*},A^{\vee})\cong Hom(\bar{A}^{*},A)
\] 
which defines a BV-operator, $\Delta$, on the Hochschild cochains.  

\begin{defi}
Let $A$ be a finitely generated projective $R$-algebra. $A$ is called a {\it Frobenius algebra} if there exists an isomorphism of left, or right, $A$-modules  
$$
		\varphi: A \xrightarrow{\cong} A^{\vee}=Hom_R (A,R)
$$ 
If the isomorphism is of $A^e$-modules, $A$ is called a {\it symmetric algebra}. 
\end{defi}

\begin{remk}
Given a Frobenius algebra $A$, it can be defined a non-degenerate bilinear form, 
$$
\langle \cdot , \cdot \rangle : A\otimes A \longrightarrow R
$$
as follows
$$ 
\langle a, b\rangle :=
	 \begin{cases}
    \varphi(b)(a)=\varphi(1)(ab) &\text{if $\varphi$ is a left isomorphism} \notag \\
    \varphi(a)(b)=\varphi(1)(ab) &\text{if $\varphi$ is a right isomorphism}   		\end{cases}			 
$$
Notice that the pairing is associative 
\begin{align}
\langle ab,c\rangle=\varphi(c)(ab)=\varphi(1)(abc)=\varphi(bc)(a)=\langle a,bc\rangle \quad &\text{($\varphi$ left isomorphism)}\notag \\
\langle ab,c\rangle=\varphi(ab)(c)=\varphi(1)(abc)=\varphi(a)(bc)=\langle a,bc\rangle \quad &\text{($\varphi$ right isomorphism)}\notag 
\end{align}
Moreover, if $\varphi$ is a two sided isomorphism the pairing is symmetric
$$
 \langle a,b\rangle=\varphi(a)(b)=\varphi(1)(ab)=\varphi(1)(ba)=\varphi(b)(a)=\langle b,a\rangle
$$
\end{remk}

From now on, an associative nonsingular bilinear form will be called a \textit{Frobenius form}. As in the case over fields, Frobenius algebras over commutative rings can be characterized by Frobenius forms.

\begin{pro}
A finitely generated projective $R$-algebra $A$ is Frobenius if and only if there exists a non-degenerate bilinear form, and it is symmetric if and only if there exists such a form which is also symmetric.
\end{pro}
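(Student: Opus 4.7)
The plan is to pass back and forth between a Frobenius isomorphism $\varphi\colon A\xrightarrow{\cong}A^{\vee}$ and a non-degenerate associative bilinear form $\langle\cdot,\cdot\rangle\colon A\otimes A\to R$ via the correspondence $\varphi(b)(a)\leftrightarrow\langle a,b\rangle$, then track how the one-sided versus two-sided module structure on $\varphi$ matches the symmetry of the form.

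For the forward implication I would use the construction of the preceding remark, $\langle a,b\rangle:=\varphi(1)(ab)$. Bilinearity and associativity are verified there, and non-degeneracy is automatic because $\varphi$ is an $R$-module isomorphism $A\to A^{\vee}$. When $\varphi$ is a two-sided module map the same remark exhibits $\varphi(1)(ab)=\varphi(1)(ba)$, so the form is symmetric.

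For the backward implication, given $\langle\cdot,\cdot\rangle$ I would define $\varphi\colon A\to A^{\vee}$ by $\varphi(b)(a):=\langle a,b\rangle$ and verify that associativity of the form translates to left $A$-linearity of $\varphi$ via
\[
\varphi(cb)(a)=\langle a,cb\rangle=\langle ac,b\rangle=\varphi(b)(ac)=(c\cdot\varphi(b))(a),
\]
where $(c\cdot f)(a):=f(ac)$ is the standard left $A$-action on $A^{\vee}$, while non-degeneracy says precisely that $\varphi$ is an $R$-module isomorphism. If the form is in addition symmetric, the analogous calculation $\varphi(bc)(a)=\langle a,bc\rangle=\langle bc,a\rangle=\langle b,ca\rangle=\langle ca,b\rangle=\varphi(b)(ca)$ makes $\varphi$ right $A$-linear as well, giving an $A^{e}$-module isomorphism.

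The main delicate point is to interpret \emph{non-degenerate} correctly over a general commutative ring $R$: the required condition is that the $R$-module map $A\to A^{\vee}$ induced by the form is an isomorphism, not merely injective. Over a field this distinction vanishes by a dimension count, but over a ring one must invoke the finitely-generated-projective hypothesis on $A$ (and the resulting canonical identification $A\cong A^{\vee\vee}$, or equivalently the existence of a dual basis $x_i,y_i\in A$ with $a=\sum_i\langle a,y_i\rangle x_i$) to guarantee that the two notions actually match. With that definition in place, the equivalences are formal consequences of the calculations above.
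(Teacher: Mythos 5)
Your proposal is correct. The paper actually states this proposition without proof, so there is nothing to compare against directly; your argument is the standard one, and its forward direction coincides with the computations already carried out in the paper's preceding remark (associativity of $\langle a,b\rangle:=\varphi(1)(ab)$ and symmetry when $\varphi$ is two-sided), while the backward direction correctly recovers one-sided $A$-linearity from associativity and $A^{e}$-linearity from symmetry. Your caveat about the meaning of non-degeneracy over a general commutative ring is well taken and consistent with the paper's terminology (``nonsingular''), i.e.\ the induced map $A\to A^{\vee}$ must be an isomorphism rather than merely injective, which is where the finitely generated projective hypothesis enters.
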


\begin{exa}\label{frob}
Let $R$ be a commutative ring. If  $G$ is a finite group then the group ring $R\left[G\right]$ is a symmetric algebra with Frobenius form given by  
$$
		\langle \cdot,\cdot \rangle: R\left[G\right]\times R\left[G\right] \longrightarrow R,  \qquad \qquad 	\langle g, h \rangle  =  \begin{cases}
									1 &\text{if $g=h^{-1}$} \notag\\
									0 &\text{otherwise}
								\end{cases}
$$
Notice that the Frobenius form of the group ring $R\left[G\right]$ could be defined by using the canonical augmentation of the group ring, 
$$
	\langle a, b \rangle:=\varepsilon(ab)
$$
where 
$$
	\varepsilon: R\left[G\right] \longrightarrow R, \quad \quad	\varepsilon \left( \sum_{g\in G}  \alpha_{_{g}} g \right)=\alpha_{e}
$$
\end{exa}

In the case when $A$ is a symmetric algebra, the BV-operator, $\Delta$, is defined as follows

\begin{pro}
The operator $\Delta: Hom(\bar{A}^{m+1},A) \rightarrow Hom(\bar{A}^{m},A)$ is given by
$$
	\Delta(f)(a_1,\dots ,a_m)= \sum_{j=1} ^{N} \sum_{i=0} ^{m} (-1)^{im} \langle 1, f(a_i,\dots ,a_n,a^j,a_1,\dots ,a_{i-1})\rangle {a^{j}}^{\vee} 
$$
where $\lbrace a^1,\dots ,a^N\rbrace$ is a basis of $A$ and $\lbrace {a^1}^{\vee},\dots ,{a^N}^{\vee}\rbrace$ is the dual basis with respect to the Frobenius form.
\end{pro}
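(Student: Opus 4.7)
The plan is to unfold the definition of $\Delta$ as the transfer of the dual Connes operator $B^{\vee}$ through the identifications induced by the symmetric Frobenius form, and then to evaluate the result term by term by expanding in the dual basis.

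First I would set up the identifications explicitly on cochains. Given $f\in Hom(\bar{A}^{m+1},A)$, the Frobenius isomorphism $\varphi:A\xrightarrow{\cong}A^{\vee}$ converts $f$ into $\tilde f\in Hom(\bar{A}^{m+1},A^{\vee})\cong Hom(\bar{A}^{m+1}\otimes A,R)$ defined by
\[
\tilde f(a_1\otimes \cdots \otimes a_{m+1}\otimes a)=\varphi\bigl(f(a_1,\dots,a_{m+1})\bigr)(a)=\langle 1,\,f(a_1,\dots,a_{m+1})\,a\rangle,
\]
where the second equality uses associativity of $\langle\cdot,\cdot\rangle$. By construction, $\Delta(f)\in Hom(\bar{A}^{m},A)$ is the cochain whose image under the analogous chain of isomorphisms equals $B^{\vee}(\tilde f)\in Hom(\bar{A}^{m}\otimes A,R)$.

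Next I would invert the Frobenius identification on the output by a dual-basis expansion. Because $\langle (a^j)^{\vee},a^i\rangle=\delta_{ij}$, every $b\in A$ decomposes as $b=\sum_{j}\langle b,a^j\rangle\,(a^j)^{\vee}$, so that
\[
\Delta(f)(a_1,\dots,a_m)=\sum_{j=1}^{N}(B^{\vee}\tilde f)(a_1\otimes\cdots\otimes a_m\otimes a^j)\,(a^j)^{\vee}.
\]
Finally I would expand $(B^{\vee}\tilde f)(a_1\otimes\cdots\otimes a_m\otimes a^j)=\tilde f\bigl(B(a_1\otimes\cdots\otimes a_m\otimes a^j)\bigr)$ using the explicit formula of Definition \ref{BConnesHH}. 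Each of the cyclically shifted summands in $B$ carries the sign $(-1)^{im}$ and places a $1$ in the last tensor slot, so after applying $\tilde f$ it contributes $\langle 1,\,f(a_i,\dots,a_m,a^j,a_1,\dots,a_{i-1})\cdot 1\rangle=\langle 1,f(a_i,\dots,a_m,a^j,a_1,\dots,a_{i-1})\rangle$ by symmetry of the pairing. Collecting these terms over $i$ and $j$ reproduces the stated formula (the $a_n$ appearing there should read $a_m$).

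The only real obstacle is bookkeeping: one must align the cyclic indexing and the signs $(-1)^{im}$ in Connes' $B$-operator with the arguments placed inside $f$, and apply symmetry and associativity of $\langle\cdot,\cdot\rangle$ consistently, particularly in the degenerate $i=0$ term. Once the Frobenius identifications and the dual-basis expansion are fixed, no further input is required, and the claimed formula follows from a direct substitution on normalized cochains.
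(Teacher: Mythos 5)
Your argument is correct: the paper states this proposition without proof, since it is exactly the unpacking of the identifications $Hom(\bar{A}^{*},A)\cong Hom(\bar{A}^{*},A^{\vee})\cong Hom(\bar{A}^{*}\otimes A,R)$ described in the paragraph preceding it, and your dual-basis expansion $b=\sum_{j}\langle b,a^j\rangle (a^{j})^{\vee}$ followed by substitution of the Connes operator carries this out faithfully, including the degenerate $i=0$ term and the observation that the $a_n$ in the statement should read $a_m$. There is no divergence from the paper's intended derivation.
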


In \cite{Tradler}, Tradler proves that $\Delta$ induces a BV-structure on $HH^{*}(A;A)$, which furthermore induces the Gerstenhaber structure of $HH^{*}(A;A)$.

\begin{teo}[\cite{Tradler}, \cite{M3}]\label{HHBV}
Let $A$ be a symmetric $R$-algebra. Then $HH^*(A,A)$ is a BV-algebra with $\Delta$ given by the dual of the Connes operator. 
\end{teo}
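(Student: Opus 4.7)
The plan is to transfer the Connes $B$-operator through the symmetric form and verify directly the three defining properties of a BV-structure: that $\Delta$ descends to a degree $-1$ operator on cohomology, that $\Delta^{2}=0$, and that the Gerstenhaber bracket $\{-,-\}$ is exactly the obstruction of $\Delta$ to being a graded derivation of the cup product.

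First I would work on the normalized Hochschild chain complex $\bar{A}^{*}\otimes A$ and recall the standard identities for the Connes operator: $bB+Bb=0$ and $B^{2}=0$. Dualizing gives an operator $B^{\vee}$ on $\mathrm{Hom}(\bar{A}^{*+1}\otimes A,R)$ of degree $-1$ satisfying the analogous identities with respect to the dual differential. The adjunction $\mathrm{Hom}(\bar{A}^{*}\otimes A,R)\cong \mathrm{Hom}(\bar{A}^{*},A^{\vee})$ combined with the $A^{e}$-module isomorphism $A\xrightarrow{\cong}A^{\vee}$ induced by the symmetric Frobenius form transports $B^{\vee}$ to the operator $\Delta$ on Hochschild cochains given by the explicit formula preceding the theorem. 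The crucial point is that $A\cong A^{\vee}$ is an isomorphism of $A^{e}$-modules precisely because the form is symmetric, so the identification intertwines the Hochschild (co)chain differentials and yields both $\delta\Delta+\Delta\delta=0$ and $\Delta^{2}=0$. Hence $\Delta$ descends to a square-zero degree $-1$ operator on $HH^{*}(A;A)$.

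The substantive part is checking the BV identity
$$\{f,g\}=-(-1)^{|f|}\bigl(\Delta(f\smile g)-\Delta(f)\smile g-(-1)^{|f|}f\smile \Delta(g)\bigr)$$
at the cochain level, up to a coboundary that vanishes on cocycles. My approach is to expand $\Delta(f\smile g)$ with the explicit formula and split the resulting double sum over $(i,j)$ according to where the basis insertion $a^{j}$ lands: entirely among the arguments of $f$, entirely among the arguments of $g$, or straddling the two factors. The first two ranges, after applying the dual-basis identity $x=\sum_{j}\langle 1,xa^{j}\rangle {a^{j}}^{\vee}$ and the relation $\langle 1,xy\rangle=\langle 1,yx\rangle$ coming from associativity and symmetry of the Frobenius form, reassemble into $\Delta(f)\smile g$ and $(-1)^{|f|}f\smile \Delta(g)$ respectively. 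The mixed terms, in which $a^{j}$ is inserted while carrying part of one factor past the other, rearrange into the two pre-Lie products $f\circ g$ and $g\circ f$ whose difference defines $\{f,g\}$.

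The hard part is the sign and index bookkeeping in this second step: simultaneously tracking the Koszul signs $(-1)^{im}$ coming from $\Delta$, the cup-product sign, and the bracket sign $(-1)^{(|f|-1)(|g|-1)}$, while reindexing the cyclic rotations that appear from the definition of $\Delta$. I would organize the computation following Tradler's original argument in \cite{Tradler}, or alternatively Menichi's cleaner reformulation in \cite{M3}, which packages part of the computation via the $S^{1}$-action on Hochschild homology and derives the BV identity more conceptually from $bB+Bb=0$ after transport along $A\cong A^{\vee}$. Once the identity holds on cochains modulo coboundaries, the cocycle condition on $f$ and $g$ makes it pass to $HH^{*}(A;A)$, yielding the desired BV-algebra structure.
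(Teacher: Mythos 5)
The paper does not prove this statement at all: it is quoted verbatim from Tradler and Menichi, so there is no internal argument to compare against. Your outline is the standard strategy of exactly those references --- transport of the Connes operator along the $A^{e}$-isomorphism $A\cong A^{\vee}$, which gives $\Delta^{2}=0$ and compatibility with the differential for free, followed by the term-by-term verification of the BV identity --- and since you explicitly defer the one genuinely hard step (the sign and reindexing bookkeeping in the bracket identity) to \cite{Tradler} and \cite{M3}, your proposal is consistent with, and no less complete than, what the paper itself does.
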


When the algebra is not symmetric but satisfies some sort of Poincaré duality. It is posible to obtain a BV-algebra structure on Hochschild cohomology by transferring the Connes operator.  

\begin{teo}\label{HHBVact}[\cite{Ginz}, \cite{M2}]
Let $a\in HH_{n}(A,A)$ such that 
\begin{align}
\rho_a:HH^*(A;A)&\longrightarrow HH_{n-*}(A;A) \notag \\
		b&\longmapsto \rho(a\otimes b) \notag 
\end{align}
is an isomorphism. If $B(a)=0$ then $HH^*(A,A)$ is a BV-algebra with $\Delta$ given by $\Delta_a:=\rho_{a}^{-1}B\rho_{a}$.
\end{teo}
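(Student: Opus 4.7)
The plan is to define $\Delta_a := \rho_a^{-1} B \rho_a$ and check that it equips $HH^*(A;A)$ with a BV-structure whose underlying Gerstenhaber algebra is the one from Gerstenhaber's theorem. That $\Delta_a$ has degree $-1$ and satisfies $\Delta_a^2 = 0$ is immediate: $\rho_a$ is an isomorphism of degree $n$, $B$ raises homological degree by one, and $B^2 = 0$, so $\Delta_a^2 = \rho_a^{-1}B^2\rho_a = 0$. The substance is the BV-identity
\[
\{f,g\} \;=\; -(-1)^{|f|}\bigl(\Delta_a(f\smile g) - \Delta_a(f)\smile g - (-1)^{|f|} f\smile \Delta_a(g)\bigr).
\]

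My strategy is to apply $\rho_a$ to both sides and reduce the identity to a statement in $HH_*(A;A)$. Using the module property $\rho_a(h\smile g) = \rho_{\rho_a(h)}(g)$ of the cap product action (and graded commutativity of $\smile$ to move past $\Delta_a(g)$), both $\rho_a(\Delta_a(f)\smile g)$ and $\rho_a(f\smile\Delta_a(g))$ rewrite as cap products of the form $B(\rho_a f)\cdot g$ and $B(\rho_a g)\cdot f$, up to signs. After applying $\rho_a$, the BV-identity thus becomes an identity in $HH_*(A;A)$ relating $B\rho_a(f\smile g)$, $B(\rho_a f)\cdot g$, $B(\rho_a g)\cdot f$, and $\rho_a(\{f,g\})$. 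This is precisely the \emph{non-commutative Cartan formula} of the Tamarkin--Tsygan calculus, asserting that the pair $(HH^*(A;A), HH_*(A;A))$ carries a differential calculus in which the Connes operator measures the failure of the cap product to be a graded derivation with respect to $\smile$, the error being the Gerstenhaber bracket. The identity holds for every $\alpha \in HH_*(A;A)$ with an additional term $\rho_{B\alpha}(f\smile g)$; specializing to $\alpha = a$ and invoking the hypothesis $B(a) = 0$ kills this term and, after transporting via $\rho_a^{-1}$, yields the BV-identity for $\Delta_a$.

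The main obstacle is the Cartan-type calculus identity itself. It is a chain-level statement on the bar resolution, verified by constructing an explicit homotopy built from Gerstenhaber's brace operations and the Connes operator, with careful sign tracking; Menichi \cite{M2} carries out this verification in detail, and Ginzburg \cite{Ginz} places it in the broader framework of non-commutative differential calculi. Once the identity is in hand, the theorem follows from the short algebraic manipulation described above.
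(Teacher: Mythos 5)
The paper does not prove this statement at all---it is quoted from \cite{Ginz} and \cite{M2}---so there is no internal proof to compare against; your outline reproduces the standard argument from those sources. Conjugating the BV identity by $\rho_a$, using associativity of the cap-product action and graded commutativity of $\smile$ to rewrite $\rho_a(\Delta_a(f)\smile g)$ and $\rho_a(f\smile\Delta_a(g))$ as cap products against $B(\rho_a f)$ and $B(\rho_a g)$, and then invoking the Tamarkin--Tsygan (Cartan-type) identity applied to $a$ with the hypothesis $B(a)=0$ killing the residual term, is exactly how Ginzburg and Menichi proceed; the formal steps you verify ($\Delta_a^2=\rho_a^{-1}B^2\rho_a=0$, degree count) are correct, and the one substantive input---the calculus identity itself---is deferred to the references, which is consistent with the paper's own treatment of this theorem as a citation.
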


%
%
%

\section{Hochschild (Co)homology for Tensor Products}\label{TensorHH(co)}

In \cite{Tensor}, Le and Zhou prove the following 

\begin{teo}[\cite{Tensor} Theorem 3.3]\label{AxB}
Let $R$ be a field and $A$ and $B$ be two $R$-algebras such that one of them is finite dimensional. Then there is an isomorphism of Gerstenhaber algebras
$$
	HH^{*}(A\otimes B; A\otimes B) \cong HH^{*}(A; A)\otimes HH^{*}(B; B) 
$$
If furthermore, $A$ and $B$ are finite dimensional symmetric algebras, the above isomorphism becomes an isomorphism of Batalin-Vilkovisky algebras.
\end{teo}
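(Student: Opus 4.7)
The plan is to establish the isomorphism in three layers, adding one piece of structure at each step: the additive Künneth isomorphism, then compatibility with the cup product and the Gerstenhaber bracket, and finally (in the symmetric case) compatibility with the BV operator.

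First I would construct the additive isomorphism. Take $A^e$- and $B^e$-projective resolutions $P_\bullet \xrightarrow{\mu_A} A$ and $Q_\bullet \xrightarrow{\mu_B} B$. Since $R$ is a field, every module is flat, so $P_\bullet \otimes Q_\bullet \xrightarrow{\mu_A \otimes \mu_B} A \otimes B$ is a projective resolution over $(A \otimes B)^e \cong A^e \otimes B^e$; the argument is a direct analogue of the one used in the lemma about diagonal approximations above. The finite dimensionality of one of the factors is exactly what is needed to commute $\otimes$ past $Hom$:
\[
Hom_{A^e}(P_\bullet, A) \otimes Hom_{B^e}(Q_\bullet, B) \xrightarrow{\;\cong\;} Hom_{(A\otimes B)^e}(P_\bullet \otimes Q_\bullet,\, A \otimes B).
\]
Passing to cohomology and applying the Künneth formula over the field $R$ yields the additive isomorphism $HH^*(A \otimes B;A \otimes B) \cong HH^*(A;A) \otimes HH^*(B;B)$.

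Second, I would upgrade this to a Gerstenhaber isomorphism. For the cup product, by the earlier lemma the product can be computed from any diagonal approximations $\Delta_A$ on $P_\bullet$ and $\Delta_B$ on $Q_\bullet$, and a sign-twisted tensor product $\Delta_A \otimes \Delta_B$ is a diagonal approximation for $P_\bullet \otimes Q_\bullet$; tracing through the Koszul signs then shows the Künneth map is multiplicative. For the bracket, I would work with Gerstenhaber's circle operation on cochains over the same chosen resolutions and verify that on cohomology the tensor product satisfies
\[
\{f_1 \otimes g_1,\, f_2 \otimes g_2\} = \{f_1,f_2\} \otimes g_1 g_2 \;\pm\; f_1 f_2 \otimes \{g_1, g_2\},
\]
which is precisely the bracket on a tensor product of Gerstenhaber algebras. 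I expect this step to be the main obstacle: the circle product of tensor cochains is not literally a sum of two terms, so the surplus contributions coming from insertions that straddle the tensor factors must be shown to be coboundaries. It is here that the finite dimensionality hypothesis pays its way, by forcing the relevant Hom--tensor interchange to be an isomorphism at the chain level rather than a quasi-isomorphism, so that one can cancel these surplus terms on the nose.

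Finally, under the symmetric hypothesis, I would promote to BV-algebras. The tensor product form $\langle -,- \rangle_A \otimes \langle -,- \rangle_B$ makes $A \otimes B$ symmetric, and if $\{a^j\}$ and $\{b^k\}$ are bases of $A$ and $B$ with duals $\{(a^j)^\vee\}$ and $\{(b^k)^\vee\}$, then $\{a^j \otimes b^k\}$ has dual basis $\{(a^j)^\vee \otimes (b^k)^\vee\}$. Plugging this into the explicit formula for $\Delta$ given before Theorem \ref{HHBV} and collecting terms according to whether the inserted dual basis element comes from $A$ or from $B$, one obtains on cohomology the decomposition
\[
\Delta_{A \otimes B}(f \otimes g) = \Delta_A(f) \otimes g + (-1)^{|f|} f \otimes \Delta_B(g),
\]
which is the canonical BV operator on a tensor product of BV-algebras. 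Combined with the Gerstenhaber isomorphism just established, this promotes the Künneth map to an isomorphism of BV-algebras, completing the proof.
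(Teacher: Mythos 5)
Your three-layer architecture matches what this paper actually does in Section \ref{TensorHH(co)}: the tensor product of resolutions (Proposition 3.2), the Hom--tensor interchange under finite dimensionality (hypothesis (\ref{isoResAB})), the tensor diagonal approximation for the cup product (Proposition \ref{TenDiag}, Theorem \ref{injection}, Corollary \ref{isoalgebras}), and the decomposition $\Delta_{A\otimes B}=\Delta_A\otimes id+(-1)^{|\cdot|}id\otimes\Delta_B$ for the BV part. Note, however, that the paper does not prove the BV decomposition the way you propose (plugging dual bases into the cochain formula for $\Delta$ and sorting terms); it instead proves the chain-level identity $\overline{AW}\,B^{A\otimes B}\,\overline{EZ}=B^{A}\otimes id+id\otimes B^{B}$ for the Connes operator (Proposition \ref{TensorConnes}) and then dualizes (Theorem \ref{BVinj}, Corollary \ref{tensorBViso}). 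Your "collect terms by which factor the inserted dual basis element comes from" conceals exactly the content of that proposition: on mixed tensors the cyclic permutations do not separate by inspection, and one needs the Alexander--Whitney/Eilenberg--Zilber comparison together with the shuffle analysis showing that only the permutations $\sigma_i$ and $\tilde{\sigma}_j$ survive. Your step is not wrong, but as written it is an announcement of the computation rather than the computation.

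The genuine gap is in your bracket step. You correctly identify that the circle product of tensor cochains contains surplus terms from insertions straddling the two factors, but your proposed mechanism for killing them does not work: finite dimensionality makes the Hom--tensor interchange an isomorphism of complexes, which guarantees that the Künneth map is realized at the cochain level, but it does nothing to make the straddling terms vanish "on the nose" --- they are genuinely nonzero cochains, and one must exhibit an explicit homotopy showing they are coboundaries (this is the technical core of Le and Zhou's proof of the Gerstenhaber part, and it is precisely the part this paper declines to reprove, quoting Theorem 3.3 of \cite{Tensor} instead). For the second half of the statement there is a clean way out that you should use: once the cup product compatibility and the identity $\Delta_{A\otimes B}=\Delta_A\otimes id+(-1)^{|\cdot|}id\otimes\Delta_B$ are established, the bracket compatibility is automatic from the BV relation $\left\{a,b\right\}=-(-1)^{|a|}(\Delta(ab)-\Delta(a)b-(-1)^{|a|}a\Delta(b))$, since both sides of the claimed bracket formula are determined by $\Delta$ and the product. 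But this shortcut is unavailable for the first half of the theorem, where the algebras are not assumed symmetric and no $\Delta$ exists; there your argument as it stands does not close.
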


In this section, we extend their result for a general class of rings and present an analogous for algebras that satisfy some sort of Poincaré duality.

\begin{pro}
Let $A$ and $B$ be $R$-projective $R$-algebras with $R$ a commutative ring. Suppose that $\PP(A)\rightarrow A$ is an $A^{e}$-projective resolution of $A$ and $\PP(B)\rightarrow B$ is a $B^{e}$-projective resolution of $B$. Then 
\[
	\PP(A\otimes B):=\PP(A)\otimes \PP(B) \longrightarrow A\otimes B
\]
is an $(A\otimes B)^{e}$-projective resolution of $A\otimes B$.
\end{pro}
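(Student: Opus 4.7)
The plan is to verify the three standard conditions: that $\PP(A)\otimes\PP(B)$ is a complex of $(A\otimes B)^e$-modules, that each term is $(A\otimes B)^e$-projective, and that the complex is acyclic over $A\otimes B$.

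First I would identify the module structure using the canonical isomorphism of $R$-algebras
\[
    (A\otimes B)^e \;=\; (A\otimes B)\otimes (A\otimes B)^{op} \;\cong\; A^e\otimes B^e,
\]
which comes from the flip on the middle two factors. Under this identification, if $P$ is an $A^e$-module and $Q$ is a $B^e$-module, then $P\otimes Q$ is naturally an $(A\otimes B)^e$-module, and tensor products of chain maps go through. Thus $\PP(A)\otimes\PP(B)$ is an honest complex of $(A\otimes B)^e$-modules augmented to $A\otimes B$ by $\mu_A\otimes\mu_B$.

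Next, for projectivity, it is enough to show that $A^e\otimes B^e$, viewed as an $(A\otimes B)^e$-module via the isomorphism above, is $(A\otimes B)^e$-projective. But under the identification it equals $(A\otimes B)^e$ itself as a left module over itself, so this is tautological. Therefore, writing $P_i\oplus U \cong \bigoplus A^e$ as $A^e$-modules and $Q_j\oplus V\cong \bigoplus B^e$ as $B^e$-modules, the tensor product $P_i\otimes Q_j$ is an $(A\otimes B)^e$-direct summand of a free module, hence projective. In total degree this gives the projectivity of each $(\PP(A)\otimes\PP(B))_n = \bigoplus_{i+j=n} P_i\otimes Q_j$.

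For acyclicity I would invoke the Künneth spectral sequence, exactly as in the previous lemma of the excerpt. Since $A$ is $R$-projective, $A^e$ is $R$-projective, and thus each $P_i$, being an $A^e$-projective summand of a free $A^e$-module, is $R$-projective; likewise for each $Q_j$. Hence $\PP(A)$ and $\PP(B)$ are complexes of flat $R$-modules, with $H_*(\PP(A))=A$ and $H_*(\PP(B))=B$ concentrated in degree zero. The Künneth spectral sequence
\[
    E^2_{p,q} = \bigoplus_{s+t=q} \operatorname{Tor}_p^R\!\bigl(H_s(\PP(A)),H_t(\PP(B))\bigr) \;\Longrightarrow\; H_{p+q}\bigl(\PP(A)\otimes \PP(B)\bigr)
\]
therefore collapses onto $H_0=A\otimes B$, so $\PP(A)\otimes\PP(B)\to A\otimes B$ is a resolution. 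Combining with the projectivity from the previous step gives the claim.

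The main obstacle is the projectivity step: over a general commutative ring $R$ the tensor product of projectives is not always projective, so the argument really depends on the ring-theoretic identification $(A\otimes B)^e\cong A^e\otimes B^e$ and the fact that this identification carries $P\otimes Q$ to the tensor module structure. Once that identification is in place, everything reduces to a summand-of-free argument, and Künneth handles the rest.
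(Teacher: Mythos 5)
Your proof is correct and follows essentially the same route as the paper's: the identification $(A\otimes B)^{e}\cong A^{e}\otimes B^{e}$ for the projectivity of each $P_i\otimes Q_j$, followed by the K\"unneth spectral sequence (using $R$-projectivity of $A$ and $B$) to collapse the homology onto $A\otimes B$ in degree zero. You simply spell out the summand-of-free argument and the flatness hypothesis more explicitly than the paper does.
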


\begin{proof}
Since 
\[
	\PP_n(A\otimes B)=\bigoplus_{i+j=n} P_i(A)\otimes P_j(B) 
\]
and $A^{e}\otimes B^{e}\cong (A\otimes B)^{e}$, $\PP(A)\otimes \PP(B)\rightarrow A\otimes B$ is an $(A\otimes B)^{e}$-projective complex of $A\otimes B$. It only remains to check that the complex is acyclic. Since $H_{*}(\PP(A))\cong A$ and $H_{*}(\PP(B))\cong B$ which are $R$-projective. Then  
\[
	Tor^{R}_{p}(H_s(\PP(A));H_t(\PP(B)))=0 \qquad \forall p\geq 1
\]
and 
\[
	Tor^{R}_{0}(H_s(\PP(A));H_t(\PP(B)))=H_s(\PP(A))\otimes H_t(\PP(B))=\begin{cases}
									A\otimes B &\text{if $s=t=0$} \notag\\
									0 &\text{otherwise}
								\end{cases}
\]
Applying the K\"{u}nneth spectral sequence, we have 
$$
H_{*}(\PP(A) \otimes \PP(B))\cong A\otimes B
$$
Therefore, $\PP(A)\otimes \PP(B)\rightarrow A\otimes B$ is an $(A\otimes B)^{e}$-projective resolution of $A\otimes B$.
\end{proof}

\begin{pro}
The following map is an isomorphism of complexes
\begin{align}
\tau: (\PP(A)\underset{A}\otimes \PP(A))\otimes (\PP(B)\underset{B}\otimes \PP(B)) &\longrightarrow \PP(A\otimes B) \underset{A\otimes B}\otimes  \PP(A\otimes B) \notag \\
a_1\underset{A}\otimes a_2 \otimes b_1\underset{B}\otimes b_2 &\longmapsto (-1)^{|a_2||b_1|}a_1\otimes b_1 \underset{A\otimes B}\otimes a_2\otimes b_2\notag 
\end{align}
\end{pro}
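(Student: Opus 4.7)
The plan is to produce an explicit two-sided inverse of $\tau$ and then verify that both maps descend to the relative tensor products and commute with the differentials. Everything reduces to Koszul sign bookkeeping; no deeper structural input is required.

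For the inverse, set
\[
\tau^{-1}\bigl(a_1\otimes b_1\underset{A\otimes B}\otimes a_2\otimes b_2\bigr) := (-1)^{|a_2||b_1|}\,(a_1\underset{A}\otimes a_2)\otimes (b_1\underset{B}\otimes b_2).
\]
Since the prefactor $(-1)^{|a_2||b_1|}$ has order two, once both $\tau$ and $\tau^{-1}$ are known to descend to the coequalizers, the compositions $\tau\circ\tau^{-1}$ and $\tau^{-1}\circ\tau$ are tautologically the identity. Hence the content lies entirely in well-definedness and in the chain map property.

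For well-definedness of $\tau$, I would check the two defining relations separately. The source imposes $a_1 a\otimes_A a_2 = a_1\otimes_A a a_2$ for $a\in A$ and $b_1 b\otimes_B b_2 = b_1\otimes_B b b_2$ for $b\in B$, while the target imposes $x(c\otimes d)\otimes_{A\otimes B} y = x\otimes_{A\otimes B}(c\otimes d)y$ for $c\otimes d\in A\otimes B$. Using the graded product $(c\otimes d)(c'\otimes d') = (-1)^{|d||c'|}cc'\otimes dd'$, the $A$-relation transports without sign because $a\otimes 1$ has zero $B$-degree, and the $B$-relation transports cleanly because $b\in B$ has degree $0$. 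The verification for $\tau^{-1}$ is symmetric.

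The chain map property is the only step requiring a calculation, and is the main obstacle. Expanding $d\circ\tau$ via the differential $d(x_1\otimes_{A\otimes B} x_2) = dx_1\otimes_{A\otimes B} x_2 + (-1)^{|x_1|}x_1\otimes_{A\otimes B} dx_2$ on the target together with the Leibniz rule on $\PP(A)\otimes\PP(B)$, and expanding $\tau\circ d$ via the corresponding Leibniz rule on the source, produces four matching pairs of terms (one for each of $da_1,da_2,db_1,db_2$). In every pair the Koszul signs on the two sides differ by an even quantity such as $-|b_1|\equiv |b_1|\pmod 2$, which is precisely absorbed by the prefactor $(-1)^{|a_2||b_1|}$. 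Once these signs are reconciled, $\tau$ is a bijective chain map, hence an isomorphism of complexes as claimed.
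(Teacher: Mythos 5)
Your proposal is correct and follows essentially the same route as the paper: the paper likewise verifies the chain-map property by expanding the Leibniz rules on both sides and matching the four resulting terms, with the prefactor $(-1)^{|a_2||b_1|}$ reconciling the Koszul signs, and it concludes by observing that $\tau$ is its own inverse. Your explicit check that $\tau$ descends to the relative tensor products is a detail the paper leaves implicit, but it does not change the argument.
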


\begin{proof}
Let $a_1, a_2\in \PP(A)$ and $b_1,b_2\in \PP(B)$ with $|a_1|=i$, $|a_2|=j$, $|b_1|=k$ and $|b_2|=l$.
\begin{align}
\tau \delta_n(a_1 \underset{A}\otimes a_2 \otimes b_1\underset{B}\otimes &b_2) = \tau ( \partial^{A}_{i+j}(a_1 \underset{A}\otimes a_2)\otimes b_1\underset{B}\otimes b_2 \notag \\
&\quad + (-1)^{i+j}a_1\underset{A}\otimes a_2 \otimes \partial^{B}_{k+l}(b_1\underset{B}\otimes b_2) ) \notag \\
&= \tau((d^A_i(a_1)\underset{A}\otimes a_2 + (-1)^i a_1\underset{A}\otimes d^A_j(a_2))\otimes b_1\underset{B}\otimes b_2 \notag \\
&\quad + (-1)^{i+j}a_1\underset{A}\otimes a_2 \otimes (d^B_k(b_1)\underset{B}\otimes b_2 + (-1)^k b_1\underset{B}\otimes d^B_l(b_2))) \notag \\
&= (-1)^{kj} d^A_i(a_1)\otimes b_1 \underset{A\otimes B}\otimes a_2\otimes b_2 \notag \\
&\quad + (-1)^{i+ k(j-1)} a_1\otimes b_1 \underset{A\otimes B}\otimes d^A_j(a_2)\otimes b_2 \notag \\
&\quad + (-1)^{i+ kj} a_1\otimes d^B_k(b_1) \underset{A\otimes B}\otimes a_2\otimes b_2 \notag \\
&\quad + (-1)^{i+j+k(j+1)} a_1\otimes b_1 \underset{A\otimes B}\otimes a_2\otimes d^B_l(b_2) \notag \\
&= (-1)^{kj}((d^A_i(a_1)\otimes b_1 + (-1)^{i} a_1\otimes d^B_k(b_1))\underset{A\otimes B}\otimes a_2\otimes b_2) \notag \\
&\quad + (-1)^{kj+i+k} a_1\otimes b_1 \underset{A\otimes B}\otimes (d^A_j(a_2)\otimes b_2 + (-1)^{j} a_2\otimes d^B_l(b_2)) \notag \\
&= (-1)^{kj}(d^{\otimes}_{i+k}(a_1\otimes b_1)\underset{A\otimes B}\otimes a_2\otimes b_2 \notag \\
&\quad + (-1)^{i+k}a_1\otimes b_1\underset{A\otimes B}\otimes d^{\otimes}_{j+l}(a_2\otimes b_2))\notag \\
&= (-1)^{kj}\partial^{\otimes}_n(a_1\otimes b_1 \underset{A\otimes B}\otimes a_2\otimes b_2) \notag \\
&= \partial^{\otimes}_n \tau(a_1\underset{A}\otimes a_2 \otimes b_1\underset{B}\otimes b_2) \notag
\end{align}
Therefore, $\tau$ is a map of complexes and it is clear that is an isomorphism in each degree, since the inverse of $\tau$ is $\tau$ itself. 
\end{proof}

\begin{pro}\label{TenDiag}
Let $\Delta^{A}:\PP(A)\rightarrow \PP(A)\underset{A}\otimes \PP(A)$ and $\Delta^{B}:\PP(B)\rightarrow \PP(B)\underset{B}\otimes \PP(B)$ be diagonal approximation maps. Then 
\[
	\Delta: \PP(A\otimes B) \xrightarrow{\Delta^{A}\otimes \Delta^{B}} (\PP(A)\underset{A}\otimes \PP(A))\otimes (\PP(B)\underset{B}\otimes \PP(B)) \xrightarrow{\tau} \PP(A\otimes B)\underset{A\otimes B}\otimes \PP(A\otimes B)  
\]
is a diagonal approximation map for $A\otimes B$.
\end{pro}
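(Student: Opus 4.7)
The plan is to verify the two defining properties of a diagonal approximation map for $\PP(A\otimes B) = \PP(A) \otimes \PP(B)$: that $\Delta$ is an $(A \otimes B)^{e}$-linear chain map and is compatible with the augmentation in the sense that $(\mu_{A \otimes B} \underset{A \otimes B}\otimes \mu_{A \otimes B}) \circ \Delta = \mu_{A \otimes B}$, where $\mu_{A \otimes B} = \mu_A \otimes \mu_B$.

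First, $\Delta^{A} \otimes \Delta^{B}$ is the external tensor product of two chain maps, hence itself a chain map, and it is $A^{e} \otimes B^{e}$-linear, which coincides with $(A \otimes B)^{e}$-linearity under the canonical identification $A^{e} \otimes B^{e} \cong (A \otimes B)^{e}$. The map $\tau$ was shown in the preceding proposition to be a chain map, and a direct check using the Koszul sign $(-1)^{|a_{2}||b_{1}|}$ shows it is $(A \otimes B)^{e}$-linear: the sign precisely compensates for the graded interchange of a piece of $\PP(B)$ past a piece of $\PP(A)$ when comparing the $A^{e}\otimes B^{e}$-action on the source with the $(A\otimes B)^{e}$-action on the target. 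Therefore $\Delta = \tau \circ (\Delta^{A} \otimes \Delta^{B})$ is an $(A \otimes B)^{e}$-linear chain map.

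For the augmentation identity, I would evaluate on a pure tensor $a \otimes b \in \PP(A) \otimes \PP(B)$. Writing $\Delta^{A}(a) = \sum a_{1} \underset{A}\otimes a_{2}$ and $\Delta^{B}(b) = \sum b_{1} \underset{B}\otimes b_{2}$, only the summands in which all four components lie in degree zero survive under $\mu_{A \otimes B} \underset{A \otimes B}\otimes \mu_{A \otimes B}$, since $\mu_{A}$ and $\mu_{B}$ vanish in positive degrees. On such summands the sign in $\tau$ is trivially $+1$, and the expression collapses to $((\mu_{A} \underset{A}\otimes \mu_{A})\Delta^{A}(a)) \otimes ((\mu_{B} \underset{B}\otimes \mu_{B})\Delta^{B}(b)) = \mu_{A}(a) \otimes \mu_{B}(b) = \mu_{A \otimes B}(a \otimes b)$, where the penultimate equality invokes that $\Delta^{A}$ and $\Delta^{B}$ are diagonal approximation maps. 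The main technical obstacle is bookkeeping the Koszul signs in the $(A \otimes B)^{e}$-linearity verification of $\tau$; once the sign convention $(-1)^{|a_{2}||b_{1}|}$ is matched with the graded module structure on $(\PP(A) \underset{A}\otimes \PP(A)) \otimes (\PP(B) \underset{B}\otimes \PP(B))$, the remainder of the argument is routine.
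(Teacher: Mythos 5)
Your proposal is correct and follows essentially the same route as the paper: the chain-map property of $\Delta$ is obtained by combining the chain-map property of $\tau$ (from the preceding proposition) with that of $\Delta^{A}\otimes\Delta^{B}$, and the augmentation identity is checked on degree-zero components, where the Koszul sign is trivial. The only difference is cosmetic — the paper writes out the boundary computation explicitly while you argue it structurally, and you additionally make the $(A\otimes B)^{e}$-linearity explicit, which the paper leaves implicit.
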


\begin{proof}
Let $a\in \PP(A)$ and $b\in \PP(B)$ with $|a|=i$ and $|b|=j$
\begin{align}
\partial^{\otimes}_{i+j}\Delta_{i+j}(a\otimes b)&= \partial^{\otimes}_{i+j}\tau(\Delta^{A}_i(a)\otimes \Delta^{B}_j(b)) = \tau \delta_{i+j}(\Delta^{A}_i(a)\otimes \Delta^{B}_j(b)) \notag \\
&= \tau (\partial^{A}_{i}\Delta^{A}_i(a)\otimes \Delta^{B}_j(b) + (-1)^i \Delta^{A}_i(a)\otimes \partial^{B}_{j}\Delta^{B}_j(b)) \notag \\
&= \tau (\Delta^{A}_{i-1}d^{A}_{i}(a)\otimes \Delta^{B}_j(b) + (-1)^i \Delta^{A}_i(a)\otimes \Delta^{B}_{j-1}d^{B}_{j}(b)) \notag \\
&= \Delta_{i+j-1}(d^{A}_{i}(a)\otimes b + (-1)^i a\otimes \Delta^{B}_{j-1}d^{B}_{j}(b)) \notag \\
&= \Delta_{i+j-1} d^{\otimes}_{i+j}(a\otimes b) \notag
\end{align}
For $|a|=|b|=0$, we have
\[
	((\mu_A\otimes\mu_B)\underset{A\otimes B}\otimes (\mu_A\otimes\mu_B))\tau(\Delta^{A}_0\otimes \Delta^{B}_0)(a\otimes b)= (\mu_A\otimes\mu_B))(a\otimes b)
\]
\end{proof}

\begin{teo}\label{injection}
Let $A$ and $B$ be $R$-projective $R$-algebras with $R$ a commutative hereditary ring. Suppose that $\PP(A)\rightarrow A$ is a resolution of $A$ of finitely generated projective $A^{e}$-modules and $\PP(B)\rightarrow B$ is a $B^{e}$-resolution of $B$ such that  
\begin{equation}\label{isoResAB}
	Hom_{(A\otimes B)^{e}}(\PP(A\otimes B), A\otimes B)\cong Hom_{A^{e}}(\PP(A), A)\otimes Hom_{B^{e}}(\PP(B), B) 
\end{equation}
Then  
\[
	HH^{*}(A; A)\otimes HH^{*}(B;B) \hookrightarrow HH^{*}(A\otimes B; A\otimes B)
\]
is an injection of graded algebras.
\end{teo}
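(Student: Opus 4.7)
The plan is to apply the algebraic K\"unneth theorem to the cochain complexes computing the two sides, and then to upgrade the resulting module-level injection to a map of graded algebras by using the diagonal approximation of Proposition \ref{TenDiag}.

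Set $C^{*}:=Hom_{A^{e}}(\PP(A),A)$ and $D^{*}:=Hom_{B^{e}}(\PP(B),B)$. Since each $P_{i}(A)$ is a finitely generated projective $A^{e}$-module and $A$ is $R$-projective, $C^{i}$ is a direct summand of a finite power of $Hom_{A^{e}}(A^{e},A)\cong A$, hence $R$-projective; so $C^{*}$ is a cochain complex of projective $R$-modules. Hypothesis (\ref{isoResAB}) identifies $HH^{*}(A\otimes B;A\otimes B)$ with the cohomology of the total complex $C^{*}\otimes D^{*}$. Because $R$ is hereditary, $Tor^{R}_{k}=0$ for $k\geq 2$, so the cochain K\"unneth theorem applied to $C^{*}\otimes D^{*}$ yields a short exact sequence
\begin{equation*}
0\to \bigoplus_{i+j=n}HH^{i}(A;A)\otimes HH^{j}(B;B)\xrightarrow{\kappa}HH^{n}(A\otimes B;A\otimes B)\to \bigoplus_{i+j=n+1}Tor^{R}_{1}(HH^{i}(A;A),HH^{j}(B;B))\to 0.
\end{equation*}
This provides the desired injection as graded $R$-modules; explicitly, $\kappa$ sends the class of $f\otimes g$, with cocycles $f\in C^{i}$ and $g\in D^{j}$, to the class of the same cochain viewed in $(C^{*}\otimes D^{*})^{i+j}$.

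To upgrade $\kappa$ to an injection of graded algebras I would compute the cup product on $HH^{*}(A\otimes B;A\otimes B)$ using the diagonal approximation $\Delta=\tau\circ(\Delta^{A}\otimes\Delta^{B})$ supplied by Proposition \ref{TenDiag}. For representatives $f\in C^{i},\ f'\in C^{i'},\ g\in D^{j},\ g'\in D^{j'}$, the composition defining $(f\otimes g)\smile(f'\otimes g')$ factors through $\tau$; the sign $(-1)^{|a_{2}||b_{1}|}$ built into $\tau$ specializes to $(-1)^{i'j}$ on the bidegree contributing to the pairing, which is exactly the Koszul sign in the multiplication on the graded tensor product $HH^{*}(A;A)\otimes HH^{*}(B;B)$. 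The remaining prefactors $(-1)^{|\alpha||\beta|}$ from the Hochschild cup product formula on each side match, so $\kappa((\alpha\smile\alpha')\otimes(\beta\smile\beta'))=\kappa(\alpha\otimes\beta)\smile\kappa(\alpha'\otimes\beta')$ on the nose, and $\kappa$ is multiplicative.

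The main obstacle is the sign bookkeeping in that last step: three sign conventions interact simultaneously (the twist in $\tau$, the prefactor in the Hochschild cup product, and the Koszul rule on the graded tensor product), and one has to verify that the sign built into $\tau$ in Proposition \ref{TenDiag} is precisely what is required for $\kappa$ to be multiplicative without residual corrections. A secondary, more routine, point is pinning down the exact version of the cochain K\"unneth theorem used: hereditarity of $R$ provides the vanishing $Tor^{R}_{k}=0$ for $k\geq 2$, and $R$-projectivity of the $C^{i}$ ensures that $C^{*}\otimes D^{*}$ computes the correct cohomology.
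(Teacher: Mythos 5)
Your proposal is correct and follows essentially the same route as the paper: the module-level injection comes from the K\"unneth theorem applied to $Hom_{A^{e}}(\PP(A),A)\otimes Hom_{B^{e}}(\PP(B),B)$ under hypothesis (\ref{isoResAB}), and multiplicativity is obtained by computing the cup product with the diagonal approximation $\tau\circ(\Delta^{A}\otimes\Delta^{B})$ of Proposition \ref{TenDiag}, whose built-in sign produces exactly the Koszul sign $(-1)^{|f'||g|}$ of the graded tensor product. The sign bookkeeping you flag as the main obstacle is carried out explicitly in the paper and comes out exactly as you predict, with no residual correction.
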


\begin{proof}
By K\"{u}nneth theorem, there is an injective map of modules. Let $\Delta^{A}:\PP(A)\rightarrow \PP(A)\underset{A}\otimes \PP(A)$ and $\Delta^{B}:\PP(B)\rightarrow \PP(B)\underset{B}\otimes \PP(B)$ be diagonal approximation maps. By proposition \ref{TenDiag}, $\Delta=\tau (\Delta^{A}\otimes \Delta^{B})$ is a diagonal approximation map for $\PP(A\otimes B)$. Let $f,f'\in Hom_{A^{e}}(\PP(A), A)$ and $g,g'\in Hom_{B^{e}}(\PP(B), B)$. Notice that the following diagram commutes
\[
\xymatrix{
\PP(A)\otimes \PP(B) \ar[d]_{\Delta^A\otimes \Delta^B} & \\
\PP(A)\underset{A}\otimes \PP(A)\otimes \PP(B)\underset{B}\otimes \PP(B) \ar[d]_{(-1)^{|f'||g|}f\underset{A}\otimes f'\otimes g\underset{B}\otimes g'} \ar[r]^-{\tau} & \left(\PP(A\otimes B)\right)\underset{A\otimes B}\otimes \left(\PP(A \otimes  B)\right) \ar[d]^{f\otimes g\underset{A\otimes B}\otimes f'\otimes g'} \\
 A\underset{A}\otimes A\otimes B\underset{B}\otimes B \ar[r]^{\tau} & A\otimes B \underset{A\otimes B}\otimes A\otimes B
}
\]
Therefore,
\begin{align}
((f\otimes g)\smile (f'\otimes g')) &= (-1)^{(|f|+|g|)(|f'|+|g'|)}((f\otimes g)\underset{A\otimes B}\otimes (f'\otimes g'))\Delta \notag \\
&= (-1)^{|f'||g|+|f||f'|+|g||g'|} ((f \underset{A}\otimes f')\Delta^{A}) \otimes ((g \underset{B} \otimes g')\Delta^{B})\notag \\
&= (-1)^{|f'||g|}(f \smile_A f') \otimes (g \smile_B g') \notag
\end{align} 
\end{proof}

\begin{cor}\label{isoalgebras}
Under the same hypothesis as in theorem \ref{injection}, if $HH^{*}(A; A)$, or $H^{*}(B;B)$, is $R$-projective. Then  
\[
	HH^{*}(A; A)\otimes HH^{*}(B;B) \cong HH^{*}(A\otimes B; A\otimes B)
\]
as graded algebras.
\end{cor}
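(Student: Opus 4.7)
The strategy is to upgrade the injection of graded algebras produced by Theorem \ref{injection} to an isomorphism, using a cohomological Künneth formula; this then makes it automatically an isomorphism of graded algebras.

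First I would regard $C^{*}_{A}:=Hom_{A^{e}}(\PP(A),A)$ and $C^{*}_{B}:=Hom_{B^{e}}(\PP(B),B)$ as cochain complexes of $R$-modules computing $HH^{*}(A;A)$ and $HH^{*}(B;B)$ respectively. The finite-generation hypothesis on $\PP(A)$ in Theorem \ref{injection} implies that each $Hom_{A^{e}}(P_{i}(A),A)$ is a direct summand of $A^{k}$ for some $k$, and since $A$ is $R$-projective, each term of $C^{*}_{A}$ is $R$-projective; the same is true for $C^{*}_{B}$ (we only need projectivity of one of the two complexes). With $R$ hereditary, the Künneth formula then supplies, for every $n$, a natural short exact sequence
\[
0\to \bigoplus_{i+j=n} HH^{i}(A;A)\otimes HH^{j}(B;B) \to H^{n}(C^{*}_{A}\otimes C^{*}_{B}) \to \bigoplus_{i+j=n+1} Tor_{1}^{R}(HH^{i}(A;A),HH^{j}(B;B))\to 0.
\]

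Next I would use the standing projectivity assumption: if $HH^{*}(A;A)$ (or $HH^{*}(B;B)$) is $R$-projective, then every term $Tor_{1}^{R}(HH^{i}(A;A),HH^{j}(B;B))$ vanishes, so the left-hand map is an isomorphism of $R$-modules. The hypothesis (\ref{isoResAB}) from Theorem \ref{injection} identifies the cochain complex $C^{*}_{A}\otimes C^{*}_{B}$ with $Hom_{(A\otimes B)^{e}}(\PP(A\otimes B),A\otimes B)$, whose cohomology is by definition $HH^{*}(A\otimes B;A\otimes B)$. Composing these two identifications produces an $R$-linear isomorphism
\[
HH^{*}(A;A)\otimes HH^{*}(B;B)\;\cong\; HH^{*}(A\otimes B;A\otimes B).
\]

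Finally I would check that this Künneth isomorphism coincides, on representatives, with the injective map of graded algebras built in Theorem \ref{injection}: both send a tensor of cocycles $[f]\otimes[g]$ to the class of $f\otimes g$ transported through (\ref{isoResAB}), up to the Koszul sign $(-1)^{|f'||g|}$ that is already absorbed into the cup-product formula used there. Since this map is therefore simultaneously an algebra homomorphism (Theorem \ref{injection}) and a bijection (Künneth), it is an isomorphism of graded algebras, completing the proof.

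The only genuinely delicate point is the last one, namely verifying that the Künneth map agrees with the algebra map of Theorem \ref{injection}; this is routine sign-chasing since the algebra map in Theorem \ref{injection} is by construction induced by the cochain-level tensor product $(f,g)\mapsto f\otimes g$ followed by the identification (\ref{isoResAB}), which is precisely the cross product whose induced map on cohomology is the Künneth homomorphism.
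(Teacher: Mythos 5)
Your proposal is correct and follows essentially the same route as the paper: apply the K\"{u}nneth formula to see that the $Tor_{1}^{R}$ correction terms vanish under the $R$-projectivity hypothesis on $HH^{*}(A;A)$ or $HH^{*}(B;B)$, and conclude that the injection of graded algebras from Theorem \ref{injection} is in fact a bijection. The paper states this more tersely, while you additionally spell out why the K\"{u}nneth hypotheses are met and why the K\"{u}nneth map agrees with the cochain-level cross product; these are worthwhile clarifications but not a different argument.
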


\begin{proof}
By K\"{u}nneth Theorem, there is an isomorphim of modules 
\begin{align}
HH^{n}(A\otimes B; A\otimes B)\cong &\bigoplus_{r+s=n} HH^{r}(A; A)\otimes HH^{s}(B;B) \notag \\
&\oplus \bigoplus_{r+s=n+1} Tor_1 ^{R} (HH^{r}(A; A), HH^{s}(B;B)) \notag
\end{align}
Since $HH^{*}(A; A)$, or $H^{*}(B;B)$, is $R$-projective, the proof of  Theorem \ref{injection} extends to an isomorphism of graded algebras.
\end{proof}

\begin{defi}
A $(p,q)${\it -shuffle} is a sequence of integers 
$$
	[i_1\cdots i_{p}|j_1\cdots j_q]
$$
represented by a permutation $\sigma\in S_{p+q}$, such that  
$$
	\sigma(1)=i_1<\cdots <i_p=\sigma(p) \quad \text{and} \quad \sigma(p+1)=j_1<\cdots <j_q=\sigma(p+q)
$$ 
The sign of a $(p,q)$-shuffle is defined by
$$
	|\sigma|:=|\lbrace (i,j)|1\leq i<j\leq p+q \text{ and } \sigma(i)>\sigma(j)\rbrace|
$$
The set of $(p,q)$-shuffles will be denoted by $S_{p,q}$.	 
\end{defi}

\begin{defi}
The {\it Alexander-Whitney map} $AW: \BB(A\otimes B) \rightarrow \BB(A)\otimes \BB(B)$ is defined as follows
\begin{align}
AW_0(a_1\otimes b_1\otimes a_2\otimes b_2) &= a_1\otimes a_2\otimes b_1\otimes b_2 \notag \\
AW_r(1\otimes 1 \otimes a_1\otimes b_1\otimes \cdots \otimes a_r\otimes b_r\otimes 1\otimes 1)&= \notag \\
\sum_{t=0}^{r} (-1)^{t(r-t)}a_1a_2\cdots a_t\otimes a_{t+1}\otimes \cdots \otimes a_r &\otimes 1\otimes 1\otimes b_1\otimes \cdots \otimes b_t\otimes b_{t+1}\cdots b_r \notag 
\end{align}
for $r\geq 1$, and by convention for $t=0$, $a_1\cdots a_t =1$ and for $t=r$, $b_{t+1}\cdots b_r =1$. 

The {\it Eilenberg-Zilber map} $EZ: \BB(A)\otimes \BB(B) \rightarrow \BB(A\otimes B)$ is defined as follows
\begin{align}
EZ_0(a_1\otimes a_2\otimes b_1\otimes b_2) &= a_1\otimes b_1\otimes a_2\otimes b_2 \notag \\
EZ_r(1\otimes a_1\otimes \cdots \otimes a_{r-t}\otimes 1\otimes 1 \otimes b_1\otimes \cdots b_t \otimes 1)&= \notag \\
\sum_{\sigma\in S_{r-t,t}} (-1)^{|\sigma|} 1\otimes 1\otimes F(x_{\sigma^{-1}(1)})&\otimes \cdots \otimes F(x_{\sigma^{-1}(r)}) \otimes 1\otimes 1 \notag 
\end{align}
for $r\geq 1$, where $F(a)=a\otimes 1$ and $F(b)=1\otimes b$.
\end{defi}

\begin{remk}\label{AWEZid}
These two maps gives an equivalence of complexes. Moreover, 
$$
AWEZ=id \quad \text{and} \quad EZAW\simeq id
$$
\end{remk}

\begin{pro}
The induced maps for $AW_*$ and $EZ_*$ are
\begin{align}
&\overline{AW}_n: (A\otimes B)^n \otimes A\otimes B  \longrightarrow \displaystyle \bigoplus_{i+j=n} A^i \otimes A \otimes B^j \otimes B \notag \\
&\overline{AW}_0\equiv id \notag \\
&\overline{AW}_n((a_1\otimes b_1\otimes \cdots \otimes a_n \otimes b_n) \otimes a\otimes b)= \notag \\
&\qquad \sum_{k=0}^{n} (-1)^{k(n-k)}(a_{k+1}\otimes \cdots \otimes a_n\otimes a a_1a_2\cdots a_k) \otimes (b_1\otimes \cdots \otimes b_k\otimes b_{k+1}\cdots b_n b) \notag \\
&\overline{EZ}_n: \displaystyle \bigoplus_{i+j=n} A^i\otimes A \otimes B^j\otimes B \longrightarrow (A\otimes B)^n\otimes A\otimes B \notag \\
&\overline{EZ}_0\equiv id \notag \\
&\overline{EZ}_n((a_1\otimes \cdots \otimes a_{n-t}\otimes a) \otimes ( b_1\otimes \cdots \otimes b_t \otimes b))= \notag \\
&\qquad \sum_{\sigma\in S_{n-t,t}} (-1)^{|\sigma|} \left( F(x_{\sigma^{-1}(1)})\otimes \cdots \otimes F(x_{\sigma^{-1}(n)})\right)\otimes a\otimes b \notag 
\end{align}
\end{pro}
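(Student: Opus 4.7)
The plan is to apply the functor $(-)\underset{(A\otimes B)^e}\otimes (A\otimes B)$ to both chain maps $AW_*$ and $EZ_*$, decompose the $\BB(A)\otimes \BB(B)$ side via the canonical ring isomorphism $(A\otimes B)^e\cong A^e\otimes B^e$ into a product of the Hochschild chain complexes $\BB(A)\underset{A^e}\otimes A$ and $\BB(B)\underset{B^e}\otimes B$, and then translate through the standard identifications $\BB(A)_n\underset{A^e}\otimes A \cong \bar{A}^n\otimes A$ and $\BB(B)_m\underset{B^e}\otimes B \cong \bar{B}^m\otimes B$ already invoked in the proof of Proposition \ref{actionHH}. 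Since $AW_*$ and $EZ_*$ are $(A\otimes B)^e$-linear chain maps, they descend to chain maps between the normalized complexes, and the task reduces to unwinding the definitions on a generator.

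For $\overline{AW}_n$ I would start with the representative $(1\otimes 1)\otimes (a_1\otimes b_1)\otimes\cdots\otimes(a_n\otimes b_n)\otimes (1\otimes 1)\underset{(A\otimes B)^e}\otimes (a\otimes b)$ of the normalized chain $(a_1\otimes b_1\otimes\cdots\otimes a_n\otimes b_n)\otimes a\otimes b$. Applying $AW_n$ yields the sum $\sum_{k=0}^n (-1)^{k(n-k)}(a_1\cdots a_k\otimes a_{k+1}\otimes\cdots\otimes a_n\otimes 1)\otimes(1\otimes b_1\otimes\cdots\otimes b_k\otimes b_{k+1}\cdots b_n)$, and the critical step is the cyclic absorption: after tensoring with $a\otimes b$, each summand splits over $A^e\otimes B^e$, the leftmost $A$-factor $a_1\cdots a_k$ of the $A$-bar is pushed through the relation and attached to $a$ to give $aa_1\cdots a_k$ at the right end, while the rightmost $B$-factor $b_{k+1}\cdots b_n$ is appended to $b$ to give $b_{k+1}\cdots b_n b$. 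This yields the claimed formula for $\overline{AW}_n$ with the sign $(-1)^{k(n-k)}$ inherited verbatim from $AW_n$.

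The case of $\overline{EZ}_n$ is more transparent because every summand in the image of $EZ_n$ already sits in the canonical form $(1\otimes 1)\otimes F(x_{\sigma^{-1}(1)})\otimes\cdots\otimes F(x_{\sigma^{-1}(n)})\otimes(1\otimes 1)$ with outer entries equal to units, so the induced map over $(A\otimes B)^e$ preserves the coefficient $a\otimes b$ and the stated expression drops out immediately. The main obstacle is the bookkeeping in the $AW$ step: checking that the cyclic identifications introduce no additional Koszul signs beyond those already carried by $AW_n$. This is true because the two identifications are internal to the factors $\BB(A)\underset{A^e}\otimes A$ and $\BB(B)\underset{B^e}\otimes B$ respectively and never swap symbols indexed by $A$ with symbols indexed by $B$, but writing it out cleanly is the only delicate point in the argument.
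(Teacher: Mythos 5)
Your argument is correct and is exactly the routine unwinding the paper has in mind: the paper states this proposition without proof, and the computation you describe --- splitting $(A\otimes B)^{e}\cong A^{e}\otimes B^{e}$, applying $-\underset{(A\otimes B)^{e}}\otimes (A\otimes B)$, and using the identification $(u\otimes \cdots \otimes v)\underset{A^{e}}\otimes a\mapsto \cdots \otimes vau$ to absorb the outer entries of each $AW$-summand into the coefficient (which sends $a_1\cdots a_k$ to the right of $a$ and $b_{k+1}\cdots b_n$ to the left of $b$) --- reproduces the stated formulas with the signs $(-1)^{k(n-k)}$ and $(-1)^{|\sigma|}$ inherited unchanged. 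One cosmetic imprecision: in regrouping $(\BB(A)\otimes\BB(B))\underset{(A\otimes B)^{e}}\otimes(A\otimes B)$ as $(\BB(A)\underset{A^{e}}\otimes A)\otimes(\BB(B)\underset{B^{e}}\otimes B)$ one \emph{does} move the coefficient $a$ past the $\BB(B)$-factor, but since $a$ sits in degree zero this contributes no Koszul sign, so your conclusion stands.
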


The Connes $B$-operator on the Hochschild homology of the tensor product of two algebras satisfies the following equation 

\begin{pro}\label{TensorConnes}
$
\overline{AW}B^{^{A\otimes B}}\overline{EZ}=B^{^{A}}\otimes id + id\otimes B^{^{B}}
$
\end{pro}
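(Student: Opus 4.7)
My plan is to prove the identity by direct evaluation on a generic pure tensor and match terms with the right-hand side. I would fix $\alpha = a_1\otimes\cdots\otimes a_p\otimes a$ in $A^{\otimes p}\otimes A$ and $\beta = b_1\otimes\cdots\otimes b_q\otimes b$ in $B^{\otimes q}\otimes B$ with $p+q = n$, and then unwind the three maps in sequence, working on normalized chains (as the Connes operator itself is defined).

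Applying $\overline{EZ}$ first yields a sum over $(p,q)$-shuffles $\sigma$ in which every interior slot of the resulting chain in $(A\otimes B)^{\otimes n}\otimes A\otimes B$ has the form $F(a_i) = a_i\otimes 1$ (if $\sigma$ places an $a$ there) or $F(b_j) = 1\otimes b_j$ (if $\sigma$ places a $b$ there), followed by the coefficient $a\otimes b$. Next, $B^{A\otimes B}$ introduces a sum over cyclic indices $i = 0,\ldots, n$: each $i$ rotates the chain so that $a\otimes b$ migrates into an interior slot, and appends $1\otimes 1$ as the new coefficient. Finally, $\overline{AW}$ splits each rotated chain at every position $k = 0,\ldots, n+1$ and projects onto $A$- and $B$-sides. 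In the $A$-projection an interior slot of the form $F(b_j)$ collapses to $1$ (and symmetrically for the $B$-projection of $F(a_i)$-slots), so in the normalized complex only specific triples $(\sigma, i, k)$ survive.

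The surviving triples fall into two disjoint families. In the first, the slot holding $a\otimes b$ after rotation lands on the $B$-side of the AW split, which forces every $F(a_l)$-slot to lie on the $A$-side as a contiguous tail and every $F(b_l)$-slot to lie on the $B$-side as a contiguous head. For fixed $0\le l\le p$, there is then a unique shuffle-rotation-split triple producing the $l$-th summand
\[
(-1)^{lp}\bigl(a_l\otimes\cdots\otimes a_p\otimes a\otimes a_1\otimes\cdots\otimes a_{l-1}\otimes 1\bigr)\otimes \beta
\]
of $B^A\otimes \mathrm{id}$, and collecting these reproduces $(B^A\alpha)\otimes\beta$. Symmetrically, the family in which $a\otimes b$ lands on the $A$-side reproduces $\alpha\otimes(B^B\beta)$. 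The trailing $1\otimes 1$ coming from $B^{A\otimes B}$ is always routed onto the edge coefficients of both sides, and is therefore allowed in the normalized complex (boundary rather than interior).

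The main obstacle is sign bookkeeping: one must verify that the product of the shuffle sign $(-1)^{|\sigma|}$, the Connes rotation sign $(-1)^{in}$, and the Alexander--Whitney split sign $(-1)^{k(n+1-k)}$ collapses precisely to $(-1)^{lp}$ for each surviving triple in the first family, and to the analogous sign $(-1)^{lq}$ in the second. This reduces to identifying the unique $(\sigma,i,k)$ attached to each summand on the right-hand side and tracking how Koszul signs interact with the cyclic rotation, which is the standard combinatorial identity underlying the Eilenberg--Zilber theorem for cyclic modules.
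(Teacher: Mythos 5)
Your proposal follows essentially the same route as the paper's proof: evaluate the composite on a pure tensor, use normalization to show that only those (shuffle, rotation, split) triples survive in which the $F(a_*)$-slots form a contiguous tail on the $A$-side and the $F(b_*)$-slots a contiguous head on the $B$-side, and match the surviving terms bijectively with the summands of the right-hand side. One small correction to the sign target you set yourself: in the second family the signs collapse to $(-1)^{p+lq}$ rather than $(-1)^{lq}$, since $id\otimes B^{B}$ carries the Koszul sign $(-1)^{p}$ on a degree-$p$ element of the first factor --- the paper's computation exhibits exactly this factor $(-1)^{n-t}$ in front of $id\otimes B^{B}_t$.
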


\begin{proof}
Let $(a_1\otimes \cdots \otimes a_{n-t}\otimes a) \otimes (b_1\otimes \cdots \otimes b_t\otimes b)\in A^{n-t}\otimes A\otimes B^t\otimes B$. Applying $B^{^{A\otimes B}}_n\overline{EZ}_n$, we get
$$
 \sum_{\sigma\in S_{n-t,t}} \sum^n_{i=0}(-1)^{|\sigma|+in} F_i\otimes \cdots \otimes F_n \otimes a\otimes b \otimes F_1 \otimes \cdots \otimes F_{i-1}\otimes 1\otimes 1
$$
where $F_i=F(x_{\sigma^{-1}(i)})$ for $1\leq i\leq n$. Reordering the inner sum, we get 
\begin{align}
\sum_{\sigma} \Bigg( &(-1)^{|\sigma|}a\otimes b \otimes F_1 \otimes \cdots \otimes F_{n}\otimes 1\otimes 1 \tag*{\circled{1}} \\
&+ \sum^{n-t}_{i=1}(-1)^{|\sigma|+in} F_i\otimes \cdots \otimes F_n \otimes a\otimes b \otimes F_1 \otimes \cdots \otimes F_{i-1}\otimes 1\otimes 1 \tag*{\circled{2}} \\
&+(-1)^{|\sigma|+(n-t+1)n} F_{n-t+1}\otimes \cdots \otimes F_n \otimes a\otimes b \otimes F_1 \otimes \cdots \otimes F_{n-t}\otimes 1\otimes 1 \tag*{\circled{3}}  \\
&+ \sum^t_{i=2}(-1)^{|\sigma|+(n-t+i)n} F_{n-t+i}\otimes \cdots \otimes F_n\otimes   \notag \\ 
& \qquad \qquad\otimes a\otimes b \otimes F_1 \otimes \cdots \otimes F_{n-t+i-1} 1\otimes 1 \Bigg)  \tag*{\circled{4}}
\end{align}
Consider the following permutations 
\begin{align}
\sigma_i &=
\begin{pmatrix}
1 & \cdots & i-1 & i & \cdots & n-t & n-t+1 & \cdots & n \\
1 & \cdots & i-1 & i+t & \cdots & n & i & \cdots & i+t-1
\end{pmatrix} \notag \\
 \tag*{$1\leq i\leq n-t$} \\
\tilde{\sigma}_j &=
\begin{pmatrix}
1 & \cdots & n-t & n-t+1 & \cdots & n-t+j-1 & n-t+j & \cdots & n \\
j & \cdots & n-t+j-1 & 1 & \cdots & j-1 & n-t+j & \cdots & n
\end{pmatrix} \notag \\
\tag*{$2\leq j\leq t$}
\end{align}
Notice that $|\sigma_i|=(n-t-i+1)t$ and $|\tilde{\sigma}_i|=(n-t)(j-1)$. Now, applying $\overline{AW}_{n+1}$ to \circled{1} the only non-zero term arises when $\sigma=\sigma_1$ and $k=0$
$$
\overline{AW}_{n+1}(\circled{1})=(-1)^{n-t}(a_1\otimes \cdots \otimes a_{n-t}\otimes a) \otimes (b\otimes b_1\otimes \cdots \otimes b_t\otimes 1)  
$$
Applying $\overline{AW}_{n+1}$ to \circled{2} the only non-zero terms arise when $\sigma=\sigma_i$ for $1\leq i\leq n-t$ and $k=t$
\begin{align}
\overline{AW}_{n+1}(\circled{2})&=\sum^{n-t}_{i=1} (-1)^{i(n-t)} (a_i\otimes \cdots \otimes a_{n-t}\otimes a\otimes a_1\otimes \cdots \otimes a_{i-1}\otimes 1)\notag \\
&\qquad \qquad \otimes (b_1\otimes \cdots \otimes b_t\otimes b) \notag 
\end{align}
Applying $\overline{AW}_{n+1}$ to \circled{3} the only non-zero terms arise when $\sigma=\sigma_i$ for $1\leq i\leq n-t$, $k=t$ and $k=t+1$
\begin{align}
\overline{AW}_{n+1}(\circled{3})=& (a\otimes a_1\otimes \cdots \otimes a_{n-t}\otimes 1) \otimes (b_1\otimes \cdots \otimes b_t\otimes b) \notag \\
&+(-1)^n(a_1\otimes \cdots \otimes a_{n-t}\otimes a) \otimes (b_1\otimes \cdots \otimes b_t\otimes b\otimes 1) \notag
\end{align}
Applying $\overline{AW}_{n+1}$ to \circled{4} the only non-zero terms arise when $\sigma=\tilde{\sigma}_i$ for $2\leq i\leq t$ and $k=t+1$
\begin{align}
\overline{AW}_{n+1}(\circled{4})&=\sum^{t}_{i=2} (-1)^{it+n-t} (a_1\otimes \cdots \otimes a_{n-t}\otimes a)  \notag \\
&\qquad \qquad \otimes (b_i\otimes \cdots \otimes b_{t}\otimes b\otimes b_1\otimes \cdots \otimes b_{i-1}\otimes 1) \notag 
\end{align}
Applying $B^{^{A}}_{n-t}\otimes id + (-1)^{n-t} id\otimes B^{^{B}}_t$ to $( a_1\otimes \cdots \otimes a_{n-t}\otimes a) \otimes (b_1\otimes \cdots \otimes b_t\otimes b)$, we get
\begin{align}
&\sum^{n-t}_{i=0} (-1)^{i(n-t)} (a_i\otimes \cdots \otimes a_{n-t}\otimes a\otimes a_1\otimes \cdots \otimes a_{i-1}\otimes 1)\otimes (b_1\otimes \cdots \otimes b_t\otimes b) \notag \\
&+(-1)^{n-t}\sum^{t}_{i=0} (-1)^{it} (a_1\otimes \cdots \otimes a_{n-t}\otimes a) \otimes (b_i\otimes \cdots \otimes b_{t}\otimes b\otimes \cdots \otimes b_{i-1}\otimes 1) \notag \\
= &(a\otimes a_1\otimes \cdots \otimes a_{n-t}\otimes 1) \otimes (b_1\otimes \cdots \otimes b_t\otimes b) \notag \\
&+ \sum^{n-t}_{i=1} (-1)^{i(n-t)} (a_i\otimes \cdots \otimes a_{n-t}\otimes a\otimes a_1\otimes \cdots \otimes a_{i-1}\otimes 1)\otimes (b_1\otimes \cdots \otimes b_t\otimes b) \notag \\
&+ (-1)^{n-t}(a_1\otimes \cdots \otimes a_{n-t}\otimes a) \otimes (b\otimes b_1\otimes \cdots \otimes b_t\otimes 1) \notag \\
&+\sum^{t}_{i=2} (-1)^{it+n-t} (a_1\otimes \cdots \otimes a_{n-t}\otimes a) \otimes (b_i\otimes \cdots \otimes b_{t}\otimes b\otimes b_1\otimes \cdots \otimes b_{i-1}\otimes 1) \notag \\
&+(-1)^n(a_1\otimes \cdots \otimes a_{n-t}\otimes a) \otimes (b_1\otimes \cdots \otimes b_t\otimes b\otimes 1) \notag \\
=& \overline{AW}_{n+1}(\circled{1}) +\overline{AW}_{n+1}(\circled{2})+\overline{AW}_{n+1}(\circled{3})+\overline{AW}_{n+1}(\circled{4}) \notag 
\end{align}
Therefore,
\begin{align}
\overline{AW}_{n+1}B^{^{A\otimes B}}_n&\overline{EZ}_n((a_1\otimes \cdots \otimes a_{n-t}\otimes a) \otimes (b_1\otimes \cdots \otimes b_t\otimes b)) \notag \\
&=\left(B^{^{A}}_{n-t}\otimes id + (-1)^{n-t} id\otimes B^{^{B}}_t\right) \left((a_1\otimes \cdots \otimes a_{n-t}\otimes a) \otimes (b_1\otimes \cdots \otimes b_t\otimes b)\right) \notag 
\end{align}
\end{proof}

\begin{teo}\label{BVinj}
Let $A$ and $B$ be finite dimensional symmetric $R$-algebras with $R$ a commutative hereditary ring. Then  
\[
	HH^{*}(A; A)\otimes HH^{*}(B;B) \hookrightarrow HH^{*}(A\otimes B; A\otimes B)
\]
is an injection of BV-algebras. 
\end{teo}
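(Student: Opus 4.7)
The plan is to upgrade the graded-algebra injection from Theorem \ref{injection} to one of BV-algebras by transferring Proposition \ref{TensorConnes} through the Frobenius duality. Since $A$ and $B$ are finite dimensional symmetric, they are finitely generated projective as $R$-modules, and the normalized bar resolutions $\bar{\BB}(A)$, $\bar{\BB}(B)$ satisfy the finiteness and Künneth hypotheses of Theorem \ref{injection}; in particular the injection $HH^*(A;A)\otimes HH^*(B;B)\hookrightarrow HH^*(A\otimes B;A\otimes B)$ already exists as a map of graded algebras. It remains only to check compatibility with the BV-operator $\Delta$ of Theorem \ref{HHBV}.

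First I would observe that $A\otimes B$ is itself symmetric, with Frobenius form the tensor product $\langle a_1\otimes b_1,a_2\otimes b_2\rangle = \langle a_1,a_2\rangle_A\langle b_1,b_2\rangle_B$. Consequently the Frobenius isomorphism $\varphi_{A\otimes B}\colon A\otimes B\xrightarrow{\cong}(A\otimes B)^\vee$ factors as $\varphi_A\otimes\varphi_B$ under the natural identification $(A\otimes B)^\vee\cong A^\vee\otimes B^\vee$ (here finite dimensionality is used). This is what makes the duality used to define $\Delta$ on $HH^*(A\otimes B;A\otimes B)$ compatible with the duality used on the two factors.

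The heart of the argument is to dualize Proposition \ref{TensorConnes}. Applying $Hom(-,R)$ to the equality $\overline{AW}\, B^{A\otimes B}\, \overline{EZ} = B^A\otimes id + id\otimes B^B$ yields a chain-level identity between the dual Connes operators on $Hom((\overline{A\otimes B})^{*}\otimes(A\otimes B),R)$ and on $Hom(\bar{A}^{*}\otimes A,R)\otimes Hom(\bar{B}^{*}\otimes B,R)$. Using the adjunction $Hom(\bar{A}^{*}\otimes A,R)\cong Hom(\bar{A}^{*},A^\vee)$ together with the factorization $\varphi_{A\otimes B}=\varphi_A\otimes\varphi_B$, this identity translates into the statement that, for cochains $f\in Hom_{A^e}(\bar{\BB}(A),A)$ and $g\in Hom_{B^e}(\bar{\BB}(B),B)$,
\[
\Delta_{A\otimes B}(f\otimes g) \;=\; \Delta_A(f)\otimes g + (-1)^{|f|}\, f\otimes \Delta_B(g)
\]
modulo a coboundary, where $f\otimes g$ is regarded in $Hom_{(A\otimes B)^e}(\bar{\BB}(A\otimes B),A\otimes B)$ via precomposition with $\overline{EZ}$. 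Passing to cohomology, this is precisely the tensor BV-structure on $HH^*(A;A)\otimes HH^*(B;B)$, so the embedding is an injection of BV-algebras.

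The main obstacle will be the careful bookkeeping of signs: the Koszul signs from the monoidal structure on cochain complexes, the shuffle signs in $EZ$, and the signs arising in the formula for $\Delta$ in terms of the Frobenius pairing all have to combine to produce exactly the tensor-BV sign $(-1)^{|f|}$. Once Proposition \ref{TensorConnes} is in hand, however, this is a direct (if tedious) verification, and the factorization $\varphi_{A\otimes B}=\varphi_A\otimes\varphi_B$ ensures there are no hidden discrepancies between the duality used on $A\otimes B$ and the one inherited from the factors.
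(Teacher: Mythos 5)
Your proposal is correct and follows essentially the same route as the paper: establish the graded-algebra injection via Theorem \ref{injection} using the finite-dimensionality hypothesis, then dualize Proposition \ref{TensorConnes} through the Frobenius duality to obtain $\overline{EZ}^{\vee}\Delta^{A\otimes B}\overline{AW}^{\vee}=\Delta^{A}\otimes id+id\otimes \Delta^{B}$ on cochains. Your explicit remark that $\varphi_{A\otimes B}=\varphi_{A}\otimes\varphi_{B}$ under $(A\otimes B)^{\vee}\cong A^{\vee}\otimes B^{\vee}$ makes precise a step the paper leaves implicit, but it is not a different argument.
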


\begin{proof}
Since both algebras are finite dimensional, we have 
$$
Hom_{(A\otimes B)^{e}}(\BB(A\otimes B), A\otimes B)\cong Hom_{A^{e}}(\BB(A), A)\otimes Hom_{B^{e}}(\BB(B), B) 
$$
Therefore, by theorem \ref{injection} there is an injection of graded algebras 
\[
	HH^{*}(A; A)\otimes HH^{*}(B;B) \hookrightarrow HH^{*}(A\otimes B; A\otimes B)
\]
By theorem \ref{HHBV}, the BV-operator is given by the dual of the Connes operator. By dualizing equation \ref{TensorConnes}, we get    
$$
\overline{EZ}^{\vee} \Delta^{A\otimes B} \overline{AW}^{\vee} = \Delta^{A}\otimes id+ id\otimes \Delta^{B}
$$
on the cochain level, which gives the desire injection on the cohomological level.  
\end{proof}

\begin{cor}[\cite{Tensor} Theorem 3.5]\label{tensorBViso}
Let $A$ and $B$ be finite dimensional symmetric $R$-algebras with $R$ a commutative hereditary ring. If $HH^{*}(A; A)$, or $H^{*}(B;B)$, is $R$-projective. Then  
\[
	HH^{*}(A; A)\otimes HH^{*}(B;B) \cong HH^{*}(A\otimes B; A\otimes B)
\]
is an isomorphism of BV-algebras. 
\end{cor}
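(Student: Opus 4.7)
The plan is to obtain the corollary by combining the two preceding results. Theorem \ref{BVinj} gives, under the finite-dimensional symmetric and $R$-hereditary hypotheses, an injection of BV-algebras
$$\Phi: HH^{*}(A;A)\otimes HH^{*}(B;B) \hookrightarrow HH^{*}(A\otimes B; A\otimes B),$$
induced on cochains by $\overline{EZ}^{\vee}$ (equivalently, by the diagonal approximation $\tau(\Delta^{A}\otimes \Delta^{B})$ on the tensor product of projective resolutions from Proposition \ref{TenDiag}). Corollary \ref{isoalgebras}, applied under the same hypotheses together with the added $R$-projectivity of $HH^{*}(A;A)$ or $HH^{*}(B;B)$, says that this same K\"unneth-type map is an isomorphism of graded algebras: finite-dimensionality furnishes the Hom identification \ref{isoResAB} on the bar resolution, while $R$ hereditary plus the projectivity hypothesis force the $\mathrm{Tor}_{1}^{R}$ correction term in the K\"unneth short exact sequence to vanish.

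The one bookkeeping step I would carry out is verifying that the two maps in question really are the same. Both are built from the natural morphism of resolutions $\PP(A)\otimes \PP(B)\to \PP(A\otimes B)$ and its realization as the Alexander-Whitney/Eilenberg-Zilber equivalence on the bar resolutions, so by the uniqueness of chain maps between projective resolutions up to homotopy (compare the proof of the first lemma in Section \ref{Hoch(co)}), they induce the same homomorphism on cohomology. Once this identification is in hand, the conclusion is immediate: a bijective homomorphism of BV-algebras is an isomorphism of BV-algebras, because its set-theoretic inverse is automatically $R$-linear, multiplicative for the cup product, and commutes with $\Delta$, these properties being transferred formally from $\Phi$.

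The main (and essentially only) obstacle I expect is the identification just mentioned; all the substantive work, namely the cochain-level equality $\overline{EZ}^{\vee}\Delta^{A\otimes B}\overline{AW}^{\vee} = \Delta^{A}\otimes \mathrm{id} + \mathrm{id}\otimes \Delta^{B}$ dual to Proposition \ref{TensorConnes}, together with the K\"unneth computation of the underlying module structure, has already been carried out in Theorem \ref{BVinj} and Corollary \ref{isoalgebras}. No further calculation involving the Connes operator is required.
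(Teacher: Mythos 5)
Your proposal is correct and matches the paper's intended argument: the corollary is stated without a separate proof precisely because it follows by combining Theorem \ref{BVinj} (the BV-injection) with Corollary \ref{isoalgebras} (the underlying graded-algebra isomorphism under the added projectivity hypothesis), exactly as you do. Your extra remark that the two K\"unneth-type maps agree up to homotopy is a reasonable piece of bookkeeping that the paper leaves implicit, but it introduces nothing beyond the paper's route.
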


Next, we study the action of Hochschild cohomology on Hochschild homology of tensor products

\begin{pro}\label{tensoractionfinite}
If at least one of the algebras is finite dimensional, the action of $HH^{*}(A\otimes B;A\otimes B)$ on $HH_{*}(A\otimes B;A\otimes B)$ is given by the tensor product of the actions.
\end{pro}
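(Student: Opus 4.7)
The plan is to compute the action on the tensor product directly, using the explicit diagonal approximation $\Delta = \tau(\Delta^A \otimes \Delta^B)$ constructed in Proposition \ref{TenDiag}, and to reduce it to the tensor product of actions on the factors. The finite-dimensionality of one of the algebras guarantees the K\"{u}nneth isomorphism on the cochain level
$$Hom_{(A\otimes B)^{e}}(\PP(A\otimes B), A\otimes B)\cong Hom_{A^{e}}(\PP(A), A)\otimes Hom_{B^{e}}(\PP(B), B),$$
exactly as in the hypothesis of Theorem \ref{injection}. Consequently, every cohomology class can be represented by a finite sum of elementary tensors $f \otimes g$ with $f \in Hom_{A^e}(\PP(A),A)$ and $g \in Hom_{B^e}(\PP(B),B)$, and by a parallel K\"{u}nneth argument on homology it suffices to verify the formula on elementary chains of the form $(x \otimes y) \otimes_{(A\otimes B)^e} (a \otimes b)$.

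The main step is then to apply Proposition \ref{actionHH} using $\Delta = \tau(\Delta^A \otimes \Delta^B)$. Writing $\Delta^A(x) = \sum x^{(1)}\otimes_A x^{(2)}$ and $\Delta^B(y) = \sum y^{(1)}\otimes_B y^{(2)}$, substitution into the action formula produces, after unraveling $\tau$, an expression in which three sources of signs appear: the sign $(-1)^{|x^{(2)}||y^{(1)}|}$ from the definition of $\tau$, the Koszul sign from evaluating the cochain $f \otimes g$ on the tensor $x^{(1)} \otimes y^{(1)}$, and the overall sign $(-1)^{(|x|+|y|)(|f|+|g|)}$ provided by Proposition \ref{actionHH}. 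Since $(f\otimes_A id)\Delta^A(x)$ is nonzero only on terms with $|x^{(1)}| = |f|$ (and analogously for $B$), these signs collapse to the canonical Koszul sign that governs the tensor product of graded homomorphisms, yielding
$$\rho\bigl((x\otimes y)\otimes_{(A\otimes B)^e}(a\otimes b),\, f\otimes g\bigr) = \pm\, \rho(x\otimes_{A^e}a, f)\otimes \rho(y\otimes_{B^e}b, g).$$

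The main obstacle is the sign bookkeeping: no new conceptual ingredients are needed beyond the diagonal approximation and the action formula already developed in Sections \ref{Hoch(co)} and \ref{TensorHH(co)}, but the verification that all Koszul contributions conspire to give exactly the standard tensor sign requires care. The computation is directly parallel to the one used in Theorem \ref{BVinj}, where the Connes operator on $\overline{EZ}$-images was shown to split as $B^A \otimes id + id \otimes B^B$ via an analogous sign reconciliation.
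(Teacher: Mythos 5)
Your proposal is correct, but it is not the route the paper takes for this particular proposition. The paper proves Proposition \ref{tensoractionfinite} by a completely explicit computation on the normalized bar complexes: it feeds an elementary chain through $\overline{EZ}$, applies $\rho^{A\otimes B}$ against $\overline{AW}^{\vee}(\alpha\otimes\beta)$, and then applies $\overline{AW}$, observing that exactly one shuffle $\sigma$ survives and computing its sign $|\sigma|=i(m-i)+t(n-t-m+i)$ by hand to land on $(-1)^{t(m-i)}(\rho^A\otimes\rho^B)$. Your argument --- invoke Proposition \ref{actionHH} for the diagonal approximation $\Delta=\tau(\Delta^A\otimes\Delta^B)$ of Proposition \ref{TenDiag}, reduce to elementary tensors via the cochain-level K\"{u}nneth isomorphism, and let the Koszul signs collapse --- is precisely the argument the paper reserves for the subsequent Proposition \ref{tensoraction}, which it explicitly calls a slight generalization of \ref{tensoractionfinite}. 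What your route buys is brevity and generality: it avoids all shuffle combinatorics and applies to any resolutions satisfying the interchange isomorphism (\ref{isoResAB}), with the finite-dimensionality hypothesis serving only to guarantee that isomorphism for the bar resolutions (note that with only one algebra finite dimensional this still works, since $Hom(\bar{A}^i\otimes\bar{B}^j,A\otimes B)\cong Hom(\bar{A}^i,A)\otimes Hom(\bar{B}^j,B)$ once $\bar{A}^i$ and $A$ are finite dimensional over the field). What the paper's explicit AW/EZ computation buys is chain-level control on the bar complex itself, which is the same setting in which it computes the Connes operator in Proposition \ref{TensorConnes} and assembles the BV-statement in Theorem \ref{BVinj}; your appeal to the analogy with that computation is apt, but if you want your version to be fully self-contained you should still write out the final sign reconciliation rather than assert that the Koszul contributions ``conspire,'' since that one line is the entire content of the verification.
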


\begin{proof}
Let $(a_1\otimes \cdots \otimes a_{n-t}\otimes a) \in A^{n-t}\otimes A$, $(b_1\otimes \cdots \otimes b_t\otimes b)\in \otimes B^t\otimes B$, $\alpha\in Hom(A^{m-i},A)$ and $\beta\in Hom(B^{i},B)$ with $n-t\geq m-i \geq 0$ and $n-m\geq t-i \geq 0$. We claim that 
$$
\overline{AW}\rho^{A\otimes B} (\overline{EZ}\otimes \overline{AW}^{\vee})=\pm\left(\rho^{A}\otimes \rho^{B}\right) 
$$
on the (co)chain level, which implies the assertion on the (co)homological level. 
\begin{align}
&(a_1\otimes \cdots \otimes a_{n-t}\otimes a)\otimes (b_1\otimes \cdots \otimes b_t\otimes b)\otimes (\alpha \otimes \beta) \notag \\
&\xmapsto{\overline{EZ}_n\otimes \overline{AW}^{\vee}_m} \left(\sum_{\sigma}(-1)^{t(m-i)+|\sigma|} F_1\otimes \cdots \otimes F_{n}\otimes a\otimes b\right)\otimes \overline{AW}^{\vee}_m(\alpha\otimes \beta) \xmapsto{\rho^{A\otimes B}} \notag \\
&\sum_{\sigma}(-1)^{t(m-i)+|\sigma|+nm} F_{m+1}\otimes \cdots \otimes F_{n} \otimes (a\otimes b) \; \overline{AW}^{\vee}_m(\alpha\otimes \beta)(F_1\otimes \cdots \otimes F_m) \notag
\end{align}
Applying $\overline{AW}_{n-m}$ the only non-zero term arise when $k=t-i$ and $\sigma$ is the following permutation
\begin{equation}
\resizebox{\textwidth}{!}{$
\begin{pmatrix}
1 &\cdots & m-i & m-i+1 & \cdots & n-t & n-t+1 & \cdots & n-t+i & n-t+i+1  \cdots  n\\
i+1 & \cdots & m & m+t-i+1 & \cdots & n & 1 & \cdots & i & m+1  \cdots  m+t-i
\end{pmatrix}$} \notag
\end{equation}
Since $|\sigma|=i(m-i)+t(n-t-m+i)$, we get
\begin{align}
(-1)^{t(m-i)+(m-i)(n-t)+it}(a_{m-i+1}\otimes \cdots \otimes a_{n-t}\otimes a\, \alpha(a_1\otimes \cdots \otimes a_{m-i}))\notag \\
\otimes (b_{i+1}\otimes \cdots \otimes b_{t}\otimes b\, \beta(b_1\otimes \cdots \otimes b_{i})) \notag 
\end{align}
which is precisely 
$$
(-1)^{t(m-i)}\left(\rho^A\otimes \rho^B\right)\left((a_1\otimes \cdots \otimes a_{n-t}\otimes a)\otimes \alpha \otimes (b_1\otimes \cdots \otimes b_t\otimes b) \otimes \beta \right)
$$
\end{proof}

The following proposition is a slightly generalization of the previous proposition \ref{tensoractionfinite}

\begin{pro}\label{tensoraction}
Under the same hypothesis as in theorem \ref{injection}, the action of $HH^{*}(A\otimes B;A\otimes B)$ on $HH_{*}(A\otimes B;A\otimes B)$ is given by the tensor product of the actions.
\end{pro}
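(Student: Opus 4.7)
The plan is to compute the action directly on the resolution $\PP(A\otimes B)=\PP(A)\otimes \PP(B)$ using the diagonal approximation $\Delta=\tau(\Delta^A\otimes \Delta^B)$ from Proposition \ref{TenDiag}, and then compare the result with the tensor product of the individual actions on $\PP(A)$ and $\PP(B)$. Conceptually, this is the natural extension of the explicit $AW/EZ$ computation of Proposition \ref{tensoractionfinite} to arbitrary resolutions satisfying the hypothesis of Theorem \ref{injection}: the map $\Delta^A\otimes \Delta^B$ plays the role of $\overline{EZ}$ on the chain side, and the factorization coming from (\ref{isoResAB}) plays the role of $\overline{AW}^{\vee}$ on the cochain side.

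First, I would use (\ref{isoResAB}) to write a cochain as $f=f_A\otimes f_B$ with $f_A\in Hom_{A^e}(\PP(A),A)$ of degree $m_A$ and $f_B\in Hom_{B^e}(\PP(B),B)$ of degree $m_B$, where $m_A+m_B=m$. On the homology side I would use the evident chain-level identification $\PP(A\otimes B)\underset{(A\otimes B)^e}\otimes (A\otimes B)\cong (\PP(A)\underset{A^e}\otimes A)\otimes (\PP(B)\underset{B^e}\otimes B)$ sending $(x_A\otimes x_B)\underset{(A\otimes B)^e}\otimes (a\otimes b)$ to $(x_A\underset{A^e}\otimes a)\otimes (x_B\underset{B^e}\otimes b)$, which follows directly from the identification $A^e\otimes B^e\cong (A\otimes B)^e$.

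Second, I would apply the formula of Proposition \ref{actionHH}. Using Sweedler-type notation $\Delta^A(x_A)=\sum u_\alpha\underset{A}\otimes v_\alpha$ and $\Delta^B(x_B)=\sum p_\beta\underset{B}\otimes q_\beta$, expanding $(f\underset{A\otimes B}\otimes id)\tau(\Delta^A(x_A)\otimes \Delta^B(x_B))$ produces two Koszul signs to track: the sign $(-1)^{|v_\alpha||p_\beta|}$ contributed by $\tau$, and the sign $(-1)^{m_A m_B}$ from evaluating the graded map $f_A\otimes f_B$ on the homogeneous piece $u_\alpha\otimes p_\beta$ (the only non-vanishing contributions require $|u_\alpha|=m_A$ and $|p_\beta|=m_B$). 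Combining these with the prefactor $(-1)^{nm}$ from Proposition \ref{actionHH} yields an explicit expression for $\rho((x_A\otimes x_B)\underset{(A\otimes B)^e}\otimes (a\otimes b)\otimes (f_A\otimes f_B))$ as a sum over $\alpha,\beta$ of the terms $f_A(u_\alpha)v_\alpha\otimes f_B(p_\beta)q_\beta$ tensored over $(A\otimes B)^e$ with $a\otimes b$, multiplied by an explicit overall sign.

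Finally, I would compare this with $\rho^A(x_A\underset{A^e}\otimes a\otimes f_A)\otimes \rho^B(x_B\underset{B^e}\otimes b\otimes f_B)$, which equals $(-1)^{n_A m_A+n_B m_B}$ times the same sum under the Künneth identification from the first step. A direct check shows that the two total signs differ by $(-1)^{n_B m_A}$ modulo $2$, which is precisely the Koszul sign arising from swapping the graded factors $x_B$ and $f_A$ when forming the tensor product of the two actions; this is consistent with the standard graded convention and yields the asserted equality up to the expected sign. The main obstacle is precisely this sign tracking, since Koszul signs enter from $\tau$, from $f_A\otimes f_B$, from the prefactor of Proposition \ref{actionHH}, and from the Künneth identification; the underlying computation is otherwise routine once Proposition \ref{TenDiag} and the factorization (\ref{isoResAB}) are in hand, and it proceeds in parallel to the more explicit argument of Proposition \ref{tensoractionfinite}.
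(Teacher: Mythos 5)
Your proposal is correct and follows essentially the same route as the paper: both use the tensor-product resolution with the diagonal approximation $\Delta=\tau(\Delta^{A}\otimes\Delta^{B})$ from Proposition \ref{TenDiag}, factor cochains via (\ref{isoResAB}), apply the formula of Proposition \ref{actionHH}, and track the Koszul signs to arrive at $\rho^{A\otimes B}=(-1)^{|f_A||x_B|}\rho^{A}\otimes\rho^{B}$, which is exactly the sign $(-1)^{|f||y|}$ in the paper's conclusion. The only difference is presentational — the paper packages the sign bookkeeping as a diagram commuting up to sign, while you expand it in Sweedler notation — and your accounting of the four sign sources is consistent with the paper's result.
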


\begin{proof}
Let $\Delta^{A}:\PP(A)\rightarrow \PP(A)\underset{A}\otimes \PP(A)$ and $\Delta^{B}:\PP(B)\rightarrow \PP(B)\underset{B}\otimes \PP(B)$ be diagonal approximation maps. By proposition \ref{TenDiag}, $\Delta=\tau (\Delta^{A}\otimes \Delta^{B})$ is a diagonal approximation map for $\PP(A\otimes B)$. Let $x\otimes a\in \PP(A)\underset{A^e}\otimes A$, $y\otimes b\in \PP(B)\underset{B^e}\otimes B$, $f\in Hom_{A^{e}}(\PP(A), A)$ and $g\in Hom_{B^{e}}(\PP(B), B)$. Notice that the following diagram commutes up to the sign $(-1)^{|x||g|}$
$$
\xymatrix{
\PP(A)\otimes \PP(B) \ar[d]_{\Delta^A\otimes \Delta^B} & \\
\PP(A)\underset{A}\otimes \PP(A)\otimes \PP(B)\underset{B}\otimes \PP(B) \ar[d]_{(-1)^{|x||g|} (f\underset{A}\otimes id) \otimes (g\underset{B}\otimes id)} \ar[r]^-{\tau} & \PP(A\otimes B)\underset{A\otimes B}\otimes \PP(A \otimes B) \ar[d]^{(f\otimes g)\underset{A\otimes B}\otimes id} \\
\PP(A)\underset{A}\otimes A \otimes \PP(B)\underset{B}\otimes B \ar[r]^{\tau} & \PP(A\otimes B) \underset{A\otimes B}\otimes A\otimes B 
}
$$
Therefore,
$$
\rho^{A\otimes B}(((x\otimes a)\otimes (y\otimes b))\otimes (f\otimes g))=(-1)^{|f||y|}\rho^{A}((x\otimes a)\otimes f)\otimes \rho^{B}((y\otimes b)\otimes g)
$$
\end{proof}

To sum up, we get the following

\begin{teo}\label{BVisoCY}
Let $R$ be a commutative hereditary ring. Let $A$ and $B$ be two $R$-algebras satisfying the following hypothesis: 
\begin{itemize}
\item Suppose that $\PP(A)\rightarrow A$ is a resolution of $A$ of finitely generated projective $A^{e}$-modules and $\PP(B)\rightarrow B$ is a $B^{e}$-resolution of $B$ such that  
$$
	Hom_{(A\otimes B)^{e}}(\PP(A\otimes B), A\otimes B)\cong Hom_{A^{e}}(\PP(A), A)\otimes Hom_{B^{e}}(\PP(B), B) 
$$
\item $HH^{*}(A; A)$, or $H^{*}(B;B)$, is $R$-projective.
\item Let $a\in HH_n(A;A)$ and $b\in HH_m(B;B)$ such that
\begin{align}
\rho_a:HH^*(A;A)&\longrightarrow HH_{n-*}(A;A) & \rho_b:HH^*(B;B)&\longrightarrow HH_{m-*}(B;B) \notag \\
		c&\longmapsto \rho(a\otimes c) & c&\longmapsto \rho(b\otimes c)\notag 
\end{align}
are isomorphisms, and $B(a)=0=B(b)$.
\end{itemize}
Then there is an isomorphism of BV-algebras
\[
	HH^{*}(A\otimes B) \cong HH^{*}(A)\otimes HH^{*}(B)
\]
\end{teo}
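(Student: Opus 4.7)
The plan is to combine Corollary \ref{isoalgebras} (which upgrades the injection to an isomorphism of graded algebras under the first two hypotheses) with Theorem \ref{HHBVact} applied to a well-chosen fundamental class on the tensor product, and then to show that the resulting BV-operator decomposes as the tensor product of the individual BV-operators coming from $a$ and $b$.

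First, I would invoke Corollary \ref{isoalgebras} to obtain an isomorphism of graded algebras
\[
\Phi: HH^{*}(A;A)\otimes HH^{*}(B;B) \xrightarrow{\cong} HH^{*}(A\otimes B; A\otimes B),
\]
so it only remains to transport the BV-operator correctly. Next, using the K\"unneth theorem for Hochschild homology (justified by the projectivity assumptions and the resolution $\PP(A)\otimes\PP(B) \to A\otimes B$ constructed earlier), I would form the class $c := a\otimes b \in HH_{n+m}(A\otimes B;A\otimes B)$, with the goal of checking that Theorem \ref{HHBVact} applies to $c$.

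The heart of the argument is three compatibility checks. Using Proposition \ref{TensorConnes} and the fact that $\overline{AW}$ and $\overline{EZ}$ induce mutually inverse isomorphisms on Hochschild homology, I would compute
\[
B^{A\otimes B}(a\otimes b) \;=\; B^{A}(a)\otimes b \;\pm\; a \otimes B^{B}(b) \;=\; 0,
\]
so the hypothesis $B(c)=0$ is satisfied. Next, Proposition \ref{tensoraction} (or \ref{tensoractionfinite}) identifies the action of $HH^{*}(A\otimes B)$ on $HH_{*}(A\otimes B)$ with (up to an explicit Koszul sign) the tensor product of the individual actions; consequently $\rho_{a\otimes b}$ corresponds under $\Phi$ and the K\"unneth isomorphism to $\pm(\rho_a\otimes \rho_b)$, which is an isomorphism because each factor is. This lets me conclude that Theorem \ref{HHBVact} provides a BV-structure $\Delta_{a\otimes b}$ on $HH^{*}(A\otimes B;A\otimes B)$ compatible with the already-known Gerstenhaber (hence graded algebra) structure.

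Finally, I would compute $\Delta_{a\otimes b}=\rho_{a\otimes b}^{-1}\,B^{A\otimes B}\,\rho_{a\otimes b}$ by chasing through the three identities just established: transport $B^{A\otimes B}$ through $\overline{AW},\overline{EZ}$ to get $B^{A}\otimes\mathrm{id} + \mathrm{id}\otimes B^{B}$, then conjugate by $\rho_a\otimes\rho_b$ (together with its inverse) to obtain $\Delta_a\otimes\mathrm{id} + \mathrm{id}\otimes \Delta_b$. This is exactly the BV-operator on the tensor product $HH^{*}(A;A)\otimes HH^{*}(B;B)$ coming from the two individual BV-structures, so $\Phi$ becomes an isomorphism of BV-algebras. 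The main obstacle I anticipate is sign bookkeeping: Propositions \ref{TensorConnes} and \ref{tensoraction} introduce Koszul signs depending on the bidegrees involved, and one must check that the signs appearing in $\rho_{a\otimes b}^{-1} B^{A\otimes B} \rho_{a\otimes b}$ match those in the standard tensor product of BV-algebras (with $\Delta(x\otimes y) = \Delta(x)\otimes y + (-1)^{|x|} x\otimes \Delta(y)$). A secondary, mostly bookkeeping issue is that Proposition \ref{TensorConnes} is naturally phrased on the bar complex while our resolutions are arbitrary; this is handled by passing to (co)homology, where any two projective resolutions are chain homotopy equivalent.
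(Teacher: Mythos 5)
Your proposal follows essentially the same route as the paper: the algebra isomorphism comes from Corollary \ref{isoalgebras}, Proposition \ref{tensoraction} identifies $\rho_{a\otimes b}$ with $\rho_a\otimes\rho_b$, Proposition \ref{TensorConnes} decomposes the Connes operator, and conjugation yields $\Delta^{A\otimes B}=\Delta^A\otimes\mathrm{id}+\mathrm{id}\otimes\Delta^B$. Your version is in fact slightly more complete than the paper's, since you explicitly verify the hypothesis $B(a\otimes b)=0$ needed to invoke Theorem \ref{HHBVact}, which the paper leaves implicit.
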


\begin{proof}
By proposition \ref{tensoraction}, the action for $A\otimes B$ is given by the tensor product of the actions. Therefore,
$$
	\rho_a\otimes \rho_b: HH^*(A\otimes B; A\otimes B) \rightarrow HH_{n+m-*}(A\otimes B;A\otimes B)
$$
is an isomorphism. Then
$$
\Delta^{A\otimes B}= (\rho^{-1}_a\otimes \rho^{-1}_b) (B^A\otimes id + id\otimes B^B)( \rho_a\otimes \rho_b) = \Delta^{A}\otimes id + id\otimes \Delta^{B} \notag 
$$
\end{proof}

\section{BV-Algebra Structure on \texorpdfstring{\boldmath{$HH^{*}(\Z[\Z/n\Z])$}}{HH*(R[Z/nZ])}}

From now on, we assume that $A$ is $R\left[\Z /n \Z \right]\cong R [\sigma] /(\sigma^{n}-1)$ with $R$ a commutative ring. Since the Hochschild (co)homology of an associative algebra can be calculated using projective $A^{e}$-resolutions and the bar construction is not convenient to make explicit calculations, we are going to use the following 2-periodical resolution \cite{cyho}, \cite{Holm}.

\begin{pro}
The following is a $A^{e}$-projective resolution of $A$
\[
	\PP(A): \, \cdots \rightarrow A\otimes A \xrightarrow{d_2} A\otimes A \xrightarrow{d_1} A\otimes A \xrightarrow{\mu} A \rightarrow 0
\]
with 
\begin{align}
\mu (a\otimes b)&=ab \notag \\
d_{2k+1}(a\otimes b)&= (a\otimes b)(1\otimes \sigma - \sigma \otimes 1) \notag \\
d_{2k}(a\otimes b)&= (a\otimes b) \sum_{i=0} ^{n-1} \sigma^i \otimes \sigma^{n-i-1} \notag
\end{align}
\end{pro}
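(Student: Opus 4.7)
The plan is to exploit the commutativity of $A=R[\Z/n\Z]$ (since $\Z/n\Z$ is abelian), which gives $A^{e}=A\otimes A^{\mathrm{op}}=A\otimes A$. I would identify the enveloping algebra with $A[y]/(y^{n}-1)$ via $y:=1\otimes\sigma$, embedding $A$ as $a\mapsto a\otimes 1$. Under this identification each $A\otimes A$ is a free $A^{e}$-module of rank one, so every term of the complex is $A^{e}$-projective, and the augmentation $\mu$ becomes the quotient $A[y]/(y^{n}-1)\twoheadrightarrow A$ sending $y\mapsto \sigma$.

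Under the same identification, the odd-indexed differentials are multiplication by $\tau:=y-\sigma$ and the even-indexed differentials are multiplication by $N:=\sum_{i=0}^{n-1}\sigma^{i}\,y^{n-i-1}$. The one-line computation
\[
\tau\cdot N=y^{n}-\sigma^{n}=1-1=0\qquad\text{in }A^{e}
\]
yields $d_{i}\circ d_{i+1}=0$, and $\mu\circ d_{1}=0$ follows because $\mu$ is a ring map (as $A$ is commutative) that kills $\tau$. So the sequence is a complex of projectives.

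The substantive step is exactness, which follows from the matrix factorization $y^{n}-1=(y-\sigma)\,N(y)$ in the polynomial ring $A[y]$. Both $\tau$ and $N$ are monic in $y$ with unit leading coefficient in $A[y]$, hence non-zero-divisors on $A[y]$ by the usual leading-coefficient argument. If $f\in A[y]$ satisfies $\tau f\equiv 0\pmod{y^{n}-1}$, lifting to $A[y]$ gives $\tau f=g(y^{n}-1)=g\,\tau\,N$ for some $g$; cancelling $\tau$ yields $f=gN$, so $f\in (N)$ modulo $y^{n}-1$. The symmetric argument swapping the roles of $\tau$ and $N$ shows $\mathrm{Ann}_{A^{e}}(N)=(\tau)$. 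Together with $\ker\mu=(\tau)$ (immediate from $A^{e}/(\tau)\cong A$ via $y\mapsto\sigma$), these equalities give exactness at every term, and the 2-periodicity of the resolution is encoded by the factorization itself.

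The only (mild) obstacle is packaging the cancellation cleanly; once one notices the matrix factorization of $y^{n}-1$ together with the fact that both factors are monic in $y$, no explicit contracting homotopy on an $R$-basis of $A^{e}$ is needed, and the resolution drops out uniformly over any commutative $R$.
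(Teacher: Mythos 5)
Your argument is correct and complete, but it takes a genuinely different route from the paper. The paper proves acyclicity by exhibiting an explicit contracting homotopy of right $A$-module maps $\tilde{s}_k$ on the standard $R$-basis $\{\sigma^i\otimes 1\}$ and checking $\mu\tilde{s}_0=id$ and $d_{k+1}\tilde{s}_{k+1}+\tilde{s}_k d_k=id$ degree by degree. You instead identify $A^e\cong A[y]/(y^n-1)$ (using commutativity of $A$), recognize the complex as the $2$-periodic resolution attached to the factorization $y^n-1=(y-\sigma)N(y)$, and deduce exactness from the three equalities $\ker\mu=(\tau)$, $\mathrm{Ann}(\tau)=(N)$, $\mathrm{Ann}(N)=(\tau)$, all of which follow from the fact that $\tau$ and $N$ are monic in $y$ and hence non-zero-divisors in $A[y]$. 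Your cancellation steps are sound: lifting $\tau f\equiv 0$ to $A[y]$ gives $\tau f=g\tau N$, and monicity of $\tau$ lets you cancel; the symmetric argument works for $N$ since its leading coefficient in $y$ is $1$. What the paper's homotopy buys is an explicitly constructive proof in the style it reuses elsewhere (contracting homotopies drive the recursive construction of the comparison maps $\psi_*$ to the bar resolution); what your approach buys is brevity and a conceptual explanation of the $2$-periodicity as coming from a matrix factorization of $y^n-1$, valid uniformly over any commutative $R$. Both establish the proposition.
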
 

\begin{proof}
First of all, notice that $A\otimes A\cong A^{e}$ as $A^{e}$-modules, so $A^2$ is $A^{e}$-free. From the definition, it follows that $d_{r}d_{r+1}=0$. Now, we are going to define the following $A$-right maps  
\[
	 \, \cdots \xleftarrow{\tilde{s}_3} A\otimes A \xleftarrow{\tilde{s}_2} A\otimes A \xleftarrow{\tilde{s}_1} A\otimes A \xleftarrow{\tilde{s}_0} A \leftarrow 0
\]
\begin{align}
	\tilde{s}_0 : A &\longrightarrow A^2, & \tilde{s}_0 (\sigma ^{i})&=1\otimes \sigma^{i} \notag \\
	\tilde{s}_{2k+1} : A^2 &\longrightarrow A^2, & \tilde{s}_{2k+1} (\sigma ^{i}\otimes 1)&= \begin{cases}
									-\displaystyle\sum_{j=0} ^{i-1} \sigma^j \otimes \sigma^{i-j-1} &\text{if $i\neq 0$} \notag\\
									0 &\text{if $i=0$}
								\end{cases} \notag \\
	\tilde{s}_{2k} : A^2 &\longrightarrow A^2, & \tilde{s}_{2k} (\sigma ^{i}\otimes 1)&= \begin{cases}
									1\otimes 1 &\text{if $i=n-1$} \notag\\
									0 &\text{otherwise}
								\end{cases} \notag
\end{align}
and by direct calculations, it follows that $\mu \tilde{s}_0= id$ and $d_{k+1}\tilde{s}_{k+1} + \tilde{s}_{k}d_k = id$ for all $k\geq 1$. Therefore, the complex is acyclic. 
\end{proof}

Tensoring this resolution by $A$ as $A^{e}$-modules and using the identification $A^2 \underset{A^e}\otimes A \cong A$, $((x\otimes y)\otimes a\mapsto yax)$, we obtain the complex
\begin{equation}\label{ho2Per}
\cdots  \rightarrow A \xrightarrow{n\sigma ^{n-1}} A \xrightarrow{0} A \xrightarrow{n\sigma ^{n-1}} A \xrightarrow{0} A
\end{equation}
Taking $Hom_{A^{e}}(- , A)$ of $\PP(A)$ and using the identification $Hom_{A^{e}}(A^2 , A)\cong A$, $(f \mapsto f(1\otimes 1))$, we obtain the complex
\begin{equation}\label{coh2Per}
A \xrightarrow{0} A \xrightarrow{n\sigma ^{n-1}} A \xrightarrow{0} A \xrightarrow{n\sigma ^{n-1}} A \rightarrow \cdots
\end{equation}

Then
$$
	HH_i(A) = \begin{cases}
					A &\text{if $i=0$} \notag\\
					A/(n\sigma ^{n-1}A) &\text{if $i=2k+1$} \notag \\
					Ann(n\sigma ^{n-1}) &\text{if $i=2k$}
			  \end{cases} \notag
$$
$$
	HH^i(A) = \begin{cases}
					A &\text{if $i=0$} \notag\\
					Ann(n\sigma ^{n-1}) &\text{if $i=2k+1$} \notag \\
					A/(n\sigma ^{n-1}A) &\text{if $i=2k$}
			  \end{cases} \notag
$$

To calculate the algebraic structures of $HH^{*}(A;A)$, we use two chain maps between $\PP(A)$ and the normalized bar resolution $\BB(A)$
\begin{align}
	\psi_*:& \; \PP(A)\rightarrow \BB(A) \notag \\
 	\varphi_*:& \; \,\BB(A)\rightarrow \PP(A) \notag
\end{align}
which are homotopy equivalences.

The $A^{e}$-homomorphisms $\psi_*$ will be defined by 
\begin{align}
	&\psi_0 = id : A^{2} \longrightarrow A^{2} \notag \\
	&\psi_{r+1}: A^{2} \longrightarrow A\otimes \bar{A}^{r+1}\otimes A, \qquad \psi_{r+1}(1\otimes 1):= s_r \psi_{r} d_{r+1}(1\otimes 1) \notag
\end{align} 
By direct computations, it follows that 
\begin{align}
	&\psi_{2r}(1\otimes 1)= \displaystyle \sum_{0\leq i_1,\dots ,i_r \leq n-1} (-1)^r 1\otimes \sigma^{i_1}\otimes \sigma \otimes \sigma^{i_2}\otimes \cdots \otimes \sigma^{i_r}\otimes \sigma\otimes \sigma^{r(n-1)-\sum_{k=1} ^{r} i_k} \notag \\
	&\psi_{2r+1}(1\otimes 1)= \displaystyle \sum_{0\leq i_1,\dots ,i_r \leq n-1} (-1)^{r+1} 1\otimes \sigma \otimes \sigma^{i_1}\otimes  \cdots \otimes \sigma^{i_r}\otimes \sigma\otimes \sigma^{r(n-1)-\sum_{k=1} ^{r} i_k} \notag
\end{align}

And the $A^{e}$-homomorphisms $\varphi_*$ will be defined by 
\begin{align}
	&\varphi_0 = id : A^{2} \longrightarrow A^{2} \notag \\
	&\varphi_1: A\otimes \bar{A}\otimes A \longrightarrow A^{2}, \; \varphi_1 (1\otimes \sigma^{i}\otimes 1):= -\sum_{j=0} ^{i-1} \sigma^j \otimes \sigma^{i-j-1} \notag \\
	&\varphi_2: A\otimes \bar{A}^{2}\otimes A \longrightarrow A^{2}, \, \varphi_2 (1\otimes \sigma^{i}\otimes \sigma^{k}\otimes 1):= \begin{cases}
									-1\otimes \sigma^{i+k-n} &\text{if $i+k\geq n$} \notag\\
									0 &\text{otherwise}
								\end{cases} \notag  
\end{align} 
$\varphi_r: A\otimes \bar{A}^{r}\otimes A \longrightarrow A^{2}$ for $r>2$ 
\begin{equation}
\resizebox{\textwidth}{!}{$
\varphi_r(1 \otimes \sigma^{i_1}\otimes  \cdots \otimes \sigma^{i_r}\otimes 1)= \varphi_{r-2}(1 \otimes \sigma^{i_1}\otimes  \cdots \otimes \sigma^{i_{r-2}}\otimes 1)\cdot \varphi_2 (1\otimes \sigma^{i_{r-1}}\otimes \sigma^{i_r}\otimes 1) \notag $}
\end{equation}

By direct computations, it follows that 

\begin{align}
\varphi_{2r}(1 \otimes \sigma^{i_1}\otimes  \cdots \otimes &\sigma^{i_{2r}}\otimes 1)= \displaystyle \prod_{k=1}^{r}\varphi_2 (1\otimes \sigma^{i_{2k-1}}\otimes \sigma^{i_{2k}}\otimes 1) \notag \\
&= \begin{cases}
		(-1)^r \otimes \sigma^{\sum_{k=1} ^{2r} i_k-rn} &\text{if $i_{2k-1}+i_{2k}\geq n$ for $1\leq k \leq r$} \notag\\
		0 &\text{otherwise}
	\end{cases} \notag  
\end{align}
\begin{align}
\varphi_{2r+1}(1 &\otimes \sigma^{i_1}\otimes  \cdots \otimes \sigma^{i_{2r+1}}\otimes 1)= \varphi_1(1\otimes \sigma^{i_1}\otimes 1)\displaystyle \prod_{k=1}^{r}\varphi_2 (1\otimes \sigma^{i_{2k}}\otimes \sigma^{i_{2k+1}}\otimes 1) \notag \\
&= \begin{cases}
		(-1)^{r+1} \displaystyle \sum_{j=0}^{i_1 -1} \sigma^{j} \otimes \sigma^{\sum_{k=1} ^{2r+1} i_k-j-rn-1} &\text{if $i_{2k}+i_{2k+1}\geq n$ for $1\leq k \leq r$} \notag\\
		0 &\text{otherwise}
	\end{cases} \notag  
\end{align}

\begin{remk}
These two maps gives an equivalence of complexes. Moreover, 
$$
\varphi_* \psi_*=id \quad \text{and} \quad \psi_* \varphi_* \simeq id
$$
\end{remk}

\begin{pro}
Using the identifications
$$
A\otimes \bar{A}^*\otimes A \underset{A^{e}}\otimes A\cong  \bar{A}^*\otimes A \quad \text{and} \quad A^2 \underset{A^{e}}\otimes A\cong A
$$
the induced maps for $\psi_*$ and $\varphi_*$ are
\begin{align}
	&\bar{\psi}_{*}: A \longrightarrow \bar{A}^* \otimes A \notag \\
	&\bar{\psi}_{2r}(a)= \displaystyle \sum_{0\leq i_1,\dots ,i_r \leq n-1} (-1)^r \sigma^{i_1}\otimes \sigma\otimes \sigma^{i_2}\otimes \cdots \otimes \sigma^{i_r}\otimes \sigma \otimes \sigma^{r(n-1)-\sum_{k=1} ^{r} i_k} a \notag \\
	&\bar{\psi}_{2r+1}(a)= \displaystyle \sum_{0\leq i_1,\dots ,i_r \leq n-1} (-1)^{r+1} \sigma\otimes \sigma^{i_1}\otimes \cdots \otimes \sigma^{i_r}\otimes \sigma \otimes \sigma^{r(n-1)-\sum_{k=1} ^{r} i_k} a \notag \\
	&\bar{\varphi}_{*}:\bar{A}^* \otimes A \longrightarrow A \notag \\
	&\bar{\varphi}_{2r}(\sigma^{i_1}\otimes \cdots \otimes \sigma^{i_{2r}}\otimes a)= \begin{cases}
		(-1)^r \sigma^{\sum_{k=1} ^{2r} i_k-rn} a &\text{if $i_{2k-1}+i_{2k}\geq n$} \notag\\
		0 &\text{otherwise}
	\end{cases} \notag  \\
    &\bar{\varphi}_{2r+1}(\sigma^{i_1}\otimes \cdots \otimes \sigma^{i_{2r+1}}\otimes a)= \begin{cases}
		(-1)^{r+1} a i_1 \sigma^{\sum_{k=1} ^{2r+1} i_k-rn-1} &\text{if $i_{2k}+i_{2k+1}\geq n$} \notag\\
		0 &\text{otherwise}
	\end{cases} \notag 
\end{align}
where $1\leq k \leq r$.
\end{pro}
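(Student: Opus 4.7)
The plan is to reduce this to a routine but careful bookkeeping exercise: the formulas for $\psi_{*}$ and $\varphi_{*}$ have already been written out explicitly in the preceding paragraphs, so the only remaining content is to apply the two canonical identifications and track signs, exponents of $\sigma$, and (for the odd case of $\varphi$) the product in $A^{e}$.

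First I would fix the identifications once and for all. For the bar part, $A\otimes \bar A^{k}\otimes A$ is $A^{e}$-free on $1\otimes \bar a_{1}\otimes\cdots\otimes \bar a_{k}\otimes 1$, so the isomorphism $A\otimes \bar A^{k}\otimes A\underset{A^{e}}{\otimes} A\cong \bar A^{k}\otimes A$ is the one sending $(x\otimes \bar a_{1}\otimes\cdots\otimes \bar a_{k}\otimes y)\otimes a$ to $\bar a_{1}\otimes\cdots\otimes \bar a_{k}\otimes yax$. For the 2-periodic resolution, the analogous map is $A^{2}\underset{A^{e}}{\otimes}A\cong A$, $(x\otimes y)\otimes a\mapsto yax$, as already recorded in (\ref{ho2Per}).

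Plugging $\psi_{2r}(1\otimes 1)$ and $\psi_{2r+1}(1\otimes 1)$ into the first identification gives $\bar\psi_{*}(a)$ immediately: the leading $1$ and the trailing $\sigma^{r(n-1)-\sum i_{k}}$ get collapsed into the single right-hand factor $\sigma^{r(n-1)-\sum i_{k}}a$, while the inner tensor factors survive unchanged. This produces the two formulas claimed for $\bar\psi_{2r}$ and $\bar\psi_{2r+1}$.

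For $\bar\varphi_{*}$ the only subtlety is that $\varphi_{r}$ was defined by multiplying terms of $A^{2}$ under the $A^{e}$-structure $(x_{1}\otimes y_{1})\cdot(x_{2}\otimes y_{2})=x_{1}x_{2}\otimes y_{2}y_{1}$. For the even case, each factor $\varphi_{2}(1\otimes \sigma^{i_{2k-1}}\otimes\sigma^{i_{2k}}\otimes 1)$ is either zero or of the form $-1\otimes\sigma^{i_{2k-1}+i_{2k}-n}$; their product is $(-1)^{r}\,1\otimes\sigma^{\sum i_{k}-rn}$, and the identification $(1\otimes y)\otimes a\mapsto ya$ yields the stated expression for $\bar\varphi_{2r}$. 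For the odd case, we multiply $\varphi_{1}(1\otimes\sigma^{i_{1}}\otimes 1)=-\sum_{j=0}^{i_{1}-1}\sigma^{j}\otimes\sigma^{i_{1}-j-1}$ by the same even product, giving $(-1)^{r+1}\sum_{j=0}^{i_{1}-1}\sigma^{j}\otimes\sigma^{\sum i_{k}-j-rn-1}$; applying $(x\otimes y)\otimes a\mapsto yax$ and using the fact that $A=R[\Z/n\Z]$ is commutative (so the powers of $\sigma$ coming from left and right combine), each of the $i_{1}$ summands collapses to $a\sigma^{\sum i_{k}-rn-1}$, which explains the factor $i_{1}$ in front.

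The only genuine point to be careful about is the bookkeeping of exponents modulo the condition $i_{2k-1}+i_{2k}\geq n$ (resp.\ $i_{2k}+i_{2k+1}\geq n$), but this is forced by the formula for $\varphi_{2}$ and propagates unchanged through the identification. No step here is deep; the whole proposition is essentially a transcription of the previously derived formulas through the two fixed isomorphisms.
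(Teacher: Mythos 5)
Your proposal is correct and supplies exactly the direct verification the paper leaves implicit (the proposition is stated there without proof): one pushes the explicit formulas for $\psi_{*}$ and $\varphi_{*}$ through the identifications $(x\otimes y)\underset{A^{e}}{\otimes}a\mapsto yax$. You also correctly isolate the one nontrivial point, namely that in the odd case the $i_{1}$ summands of $\varphi_{1}$ all collapse to the same element $a\sigma^{\sum_{k}i_{k}-rn-1}$ under this identification because $A$ is commutative, which is where the factor $i_{1}$ comes from.
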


\begin{pro}
Using the identifications
$$
Hom_{_{A^{e}}} (A \otimes \bar{A}^* \otimes A, A)\cong  Hom (\bar{A}^*, A) \quad \text{and} \quad Hom_{A^{e}}(A^2 , A)\cong A
$$
the induced maps for $\psi_*$ and $\varphi_*$ are
\begin{align}
	&\bar{\psi}_{r}^{*}: Hom (\bar{A}^r, A) \longrightarrow A \notag \\
	&\bar{\psi}_{2r}^{*}(f)= \displaystyle \sum_{0\leq i_1,\dots ,i_r \leq n-1} (-1)^r f(\sigma^{i_1}, \sigma, \sigma^{i_2}, \dots ,\sigma^{i_r}, \sigma) \sigma^{r(n-1)-\sum_{k=1} ^{r} i_k} \notag \\
	&\bar{\psi}_{2r+1}^{*}(f)= \displaystyle \sum_{0\leq i_1,\dots ,i_r \leq n-1} (-1)^{r+1} f(\sigma, \sigma^{i_1}, \dots , \sigma^{i_r}, \sigma ) \sigma^{r(n-1)-\sum_{k=1} ^{r} i_k} \notag \\
	&\bar{\varphi}_{r}^{*}:A \longrightarrow Hom_{_{R}} (\bar{A}^r, A) \notag \\
	&\bar{\varphi}_{2r}^{*}(a)(\sigma^{i_1}, \dots , \sigma^{i_{2r}})= \begin{cases}
		(-1)^r a \sigma^{\sum_{k=1} ^{2r} i_k-rn} &\text{if $i_{2k-1}+i_{2k}\geq n$} \notag\\
		0 &\text{otherwise}
	\end{cases} \notag  \\
    &\bar{\varphi}_{2r+1}^{*}(a)(\sigma^{i_1}, \dots , \sigma^{i_{2r+1}})= \begin{cases}
		(-1)^{r+1} a i_1 \sigma^{\sum_{k=1} ^{2r+1} i_k-rn-1} &\text{if $i_{2k}+i_{2k+1}\geq n$} \notag\\
		0 &\text{otherwise}
	\end{cases} \notag 
\end{align}
where $1\leq k \leq r$.
\end{pro}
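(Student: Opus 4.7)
The proof is a direct computation from the explicit formulas for $\psi_*$ and $\varphi_*$ established in the preceding propositions, combined with a careful unpacking of the two natural identifications. Under $Hom_{A^{e}}(A^2, A) \cong A$, an element $a \in A$ corresponds to the $A^e$-linear map $\hat a$ determined by $\hat a(x \otimes y) = xay$; under $Hom_{A^e}(A \otimes \bar A^r \otimes A, A) \cong Hom(\bar A^r, A)$, a function $g$ corresponds to the $A^e$-linear map $\tilde g$ determined by $\tilde g(x \otimes a_1 \otimes \cdots \otimes a_r \otimes y) = x \cdot g(a_1, \dots, a_r) \cdot y$. These are the canonical $A^e$-linear extensions of the evaluations at the generators $1 \otimes 1$ and $1 \otimes a_1 \otimes \cdots \otimes a_r \otimes 1$ respectively.

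To obtain $\bar\psi_r^{*}(g)$, the plan is to form $(\tilde g \circ \psi_r)(1 \otimes 1)$, substitute the closed formulas for $\psi_{2r}(1\otimes 1)$ and $\psi_{2r+1}(1\otimes 1)$ given earlier, and read off the resulting element of $A$. Applying $\tilde g$ to each summand of $\psi_{2r}(1\otimes 1)$ replaces the inner tensor factors $\sigma^{i_1} \otimes \sigma \otimes \cdots \otimes \sigma^{i_r} \otimes \sigma$ by the value of $g$ on $(\sigma^{i_1},\sigma,\dots,\sigma^{i_r},\sigma)$, while the outer left factor $1$ and the outer right factor $\sigma^{r(n-1)-\sum i_k}$ multiply this value on the left and right; this yields precisely the stated formula for $\bar\psi_{2r}^{*}(g)$, and the odd case is identical.

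To obtain $\bar\varphi_r^{*}(a)$, the plan is to evaluate $\hat a\bigl(\varphi_r(1 \otimes \sigma^{i_1} \otimes \cdots \otimes \sigma^{i_r} \otimes 1)\bigr)$ for each tuple of exponents. In the even degree case, $\varphi_{2r}$ outputs either $0$ or the single term $(-1)^r (1 \otimes \sigma^{\sum i_k - rn})$, and applying $\hat a$ returns $(-1)^r a \sigma^{\sum i_k - rn}$. In the odd degree case, $\varphi_{2r+1}$ produces the sum $(-1)^{r+1}\sum_{j=0}^{i_1 - 1} \sigma^j \otimes \sigma^{\sum i_k - j - rn - 1}$; each summand contributes $\sigma^j \cdot a \cdot \sigma^{\sum i_k - j - rn - 1} = a\,\sigma^{\sum i_k - rn - 1}$, which is independent of $j$ because $A$ is commutative, so summing $j$ from $0$ to $i_1 - 1$ produces the factor $i_1$ appearing in the stated expression.

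There is no conceptual obstacle here; the entire argument is substitution and simplification. The only items to watch carefully are the $A^e$-module structure $(a \otimes b) \cdot m = amb$ on the target $A$, which governs how the outer tensor factors act when $\hat a$ and $\tilde g$ are evaluated, and the signs $(-1)^r$ and $(-1)^{r+1}$ inherited from $\psi_*$ and $\varphi_*$. Once these conventions are fixed, the four stated formulas are immediate from the definitions.
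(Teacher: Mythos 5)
Your proof is correct and follows exactly the route the paper intends (the paper omits the proof as a routine verification): unpack the two identifications via the canonical $A^e$-linear extensions, precompose with the explicit formulas for $\psi_*$ and $\varphi_*$, and simplify using commutativity of $A$. In particular you correctly identify the one non-obvious point, namely that the $i_1$ summands $\sigma^{j}\,a\,\sigma^{\sum i_k - j - rn - 1}$ in the odd case all coincide, producing the factor $i_1$.
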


%

%
%
%

\subsection{Cup Product and Cohomology Ring}

\begin{lem}\label{cupeven}
Let $R$ be a commutative ring. Then the cup product on the even Hochschild cohomology of $A=R [\sigma] /(\sigma^{n}-1)$ is induced by multiplication in $A$.
\end{lem}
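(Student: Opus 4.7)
The plan is to compute the cup product directly at the cochain level on the normalized bar resolution, using the explicit formula for $\bar{\varphi}^{*}_{2r}$ to pull even-degree cohomology classes back to elements of $A$. Since the paper has already established (via the comparison theorem) that any projective resolution and any diagonal approximation map compute the same cup product, it suffices to work with the bar resolution formula (\ref{usucup}).

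First, I would represent an even-degree class $\alpha \in HH^{2r}(A;A)$ by the cocycle $\bar{\varphi}^{*}_{2r}(a) \in Hom(\bar{A}^{2r},A)$ for some $a \in A$; similarly $\beta \in HH^{2s}(A;A)$ by $\bar{\varphi}^{*}_{2s}(b)$. I would then evaluate the cup product on a generating tuple. Using (\ref{usucup}),
\[
\bigl(\bar{\varphi}^{*}_{2r}(a) \smile \bar{\varphi}^{*}_{2s}(b)\bigr)(\sigma^{i_1},\dots,\sigma^{i_{2r+2s}}) = \bar{\varphi}^{*}_{2r}(a)(\sigma^{i_1},\dots,\sigma^{i_{2r}})\cdot \bar{\varphi}^{*}_{2s}(b)(\sigma^{i_{2r+1}},\dots,\sigma^{i_{2r+2s}}).
\]
By the explicit formula for $\bar{\varphi}^{*}_{2r}$, the right-hand side vanishes unless $i_{2k-1}+i_{2k}\geq n$ holds for every $1\leq k\leq r$ \emph{and} every $r+1\leq k\leq r+s$, i.e.\ for all $1\leq k\leq r+s$. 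When these inequalities all hold, the product equals
\[
(-1)^{r}a\,\sigma^{\sum_{k=1}^{2r}i_k-rn}\cdot(-1)^{s}b\,\sigma^{\sum_{k=2r+1}^{2r+2s}i_k-sn} = (-1)^{r+s}ab\,\sigma^{\sum_{k=1}^{2r+2s}i_k-(r+s)n},
\]
which is precisely $\bar{\varphi}^{*}_{2(r+s)}(ab)(\sigma^{i_1},\dots,\sigma^{i_{2r+2s}})$.

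Consequently, as cochains on the bar resolution, $\bar{\varphi}^{*}_{2r}(a) \smile \bar{\varphi}^{*}_{2s}(b) = \bar{\varphi}^{*}_{2(r+s)}(ab)$. Passing to cohomology classes under the identification $HH^{2r}(A;A)\cong A/(n\sigma^{n-1}A)$ coming from (\ref{coh2Per}), the cup product sends $[a]\otimes [b]$ to $[ab]$, which is exactly multiplication in $A$.

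There is no serious obstacle here: the statement reduces to an elementary check that the ``pairing condition'' $i_{2k-1}+i_{2k}\geq n$ splits cleanly at the boundary $k=r$ between the two factors, and that the signs combine as $(-1)^{r}(-1)^{s}=(-1)^{r+s}$. The reason the lemma is confined to the even part is that $\bar{\varphi}^{*}_{2r+1}$ carries the asymmetric factor $i_1$, so the analogous even$\smile$odd or odd$\smile$odd computation requires a separate (slightly more delicate) analysis handled in later results.
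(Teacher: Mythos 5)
Your proposal is correct and follows essentially the same route as the paper: represent even classes by $\bar{\varphi}^{*}_{2r}(a)$, compute the cup product on the bar resolution, observe that the pairing conditions concatenate and the signs multiply to give $\bar{\varphi}^{*}_{2(r+s)}(ab)$, then pass back to $A$ via $\bar{\psi}^{*}\bar{\varphi}^{*}=id$. No substantive differences.
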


\begin{proof}
Let $a\in HH^{2r}(A;A)$ and $b\in HH^{2s}(A;A)$. Then 
$$
\bar{\varphi}_{2r}^{*}(a) \smile \bar{\varphi}_{2s}^{*}(b)\in Hom(\bar{A}^{2(r+s)}, A)
$$ 
and

\begin{equation}
\resizebox{\textwidth}{!}{$
(\bar{\varphi}_{2r}^{*}(a) \smile \bar{\varphi}_{2s}^{*}(b))(\sigma^{i_1}, \dots , \sigma^{i_{2(r+s)}}) = (\bar{\varphi}_{2r}^{*}(a))(\sigma^{i_1}, \dots , \sigma^{i_{2r}})\cdot (\bar{\varphi}_{2s}^{*}(b))(\sigma^{i_{2r+1}}, \dots , \sigma^{i_{2(r+s)}})$} \notag
\end{equation}
\begin{align}
&= \begin{cases}
		(-1)^{r+s} ab \sigma^{\sum_{k=1} ^{2(r+s)} i_k-(r+s)n} &\text{if $i_{2k-1}+i_{2k}\geq n$ for $1\leq k \leq r+s$} \notag\\
		0 &\text{otherwise}
	\end{cases} \notag  \\
&=	(\bar{\varphi}_{2(r+s)}^{*}(ab))(\sigma^{i_1}, \dots , \sigma^{i_{2(r+s)}})\notag
\end{align}
Since $\bar{\psi}^{*}\bar{\varphi}^{*}=id$, then the cup product is induced by multiplication in $A$.
\end{proof}

\begin{lem}\label{cupodd}
$\smile : HH^{i}(A;A)\otimes HH^{j}(A;A)\rightarrow HH^{i+j}(A;A)$ is induced by multiplication if $i$ or $j$ is even, and by the formula 
$$
	a\smile b = - \frac{(n-1)n}{2} ab\sigma^{n-2}
$$
if $i$ and $j$ are odd.
\end{lem}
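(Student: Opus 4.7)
The strategy is to compute the cup product in the $2$-periodic resolution by transferring through the bar resolution, exactly as in Lemma \ref{cupeven}. Since $\bar{\varphi}^{*}$ and $\bar{\psi}^{*}$ are chain homotopy equivalences satisfying $\bar{\varphi}_{*}\bar{\psi}_{*}=\mathrm{id}$, for representatives $a,b\in A$ of classes under the identification $Hom_{A^{e}}(A^{2},A)\cong A$ we have
$$
a\smile b \;=\; \bar{\psi}^{*}\bigl(\bar{\varphi}^{*}(a)\smile\bar{\varphi}^{*}(b)\bigr).
$$
The $(2r,2s)$ case is already Lemma \ref{cupeven}; the three remaining parity cases follow the same template, namely writing out the right-hand side with the explicit formulas for $\bar{\varphi}^{*}$ and $\bar{\psi}^{*}$, determining which input sequences give nonzero contributions, and summing.

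For the mixed cases $(i,j)=(2r,2s+1)$ and $(2r+1,2s)$, the total degree $2(r+s)+1$ is odd, so $\bar{\psi}^{*}_{2(r+s)+1}$ evaluates the cup-product cochain on the alternating input $(\sigma,\sigma^{j_{1}},\sigma,\sigma^{j_{2}},\ldots,\sigma^{j_{r+s}},\sigma)$, in which every other entry is already $\sigma$. Substituting, the constraints $i_{2k-1}+i_{2k}\geq n$ and $i_{2k}+i_{2k+1}\geq n$ coming from the two $\bar{\varphi}^{*}$ factors each reduce to $1+j_{\ell}\geq n$, forcing every $j_{\ell}=n-1$. A short calculation of the exponents of $\sigma$ shows that the unique surviving term equals $ab$, so in these cases the cup product is induced by multiplication in $A$.

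The heart of the proof is the $(2r+1,2s+1)$ case, where the total degree $2(r+s+1)$ is even, and $\bar{\psi}^{*}_{2(r+s+1)}$ evaluates on the input $(\sigma^{j_{1}},\sigma,\sigma^{j_{2}},\sigma,\ldots,\sigma^{j_{r+s+1}},\sigma)$, so now the odd positions carry the free exponents. After substitution, the constraints from the two odd-degree $\bar{\varphi}^{*}$ factors become $1+j_{\ell}\geq n$ only for $\ell\geq 2$, forcing $j_{2}=\cdots=j_{r+s+1}=n-1$ but leaving $j_{1}$ free. The formula for $\bar{\varphi}^{*}_{2r+1}(a)$ contributes an explicit factor $i_{1}=j_{1}$, the formula for $\bar{\varphi}^{*}_{2s+1}(b)$ contributes a factor $i_{2r+2}=1$, and the powers of $\sigma$ telescope to $\sigma^{n-2}$ independently of $j_{1}$. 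Summing over $0\leq j_{1}\leq n-1$ then yields the arithmetic progression $\sum_{j_{1}=0}^{n-1}j_{1}=\tfrac{n(n-1)}{2}$, and the three signs $(-1)^{r+1},(-1)^{s+1}$ from the two $\bar{\varphi}^{*}$'s together with $(-1)^{r+s+1}$ from $\bar{\psi}^{*}$ combine to an overall minus, producing $a\smile b=-\tfrac{n(n-1)}{2}\,ab\,\sigma^{n-2}$.

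The main obstacle is the index bookkeeping in the odd-odd case: the Frobenius-type conditions imposed by $\bar{\varphi}^{*}_{2r+1}$ and $\bar{\varphi}^{*}_{2s+1}$ shift in position when the two sequences are concatenated, and one must check that exactly one free parameter $j_{1}$ survives the constraints, that the surviving exponent of $\sigma$ collapses to $n-2$ uniformly in $j_{1}$, and that the three sign contributions combine correctly. Once these are verified the mixed cases are routine variants of the same bookkeeping.
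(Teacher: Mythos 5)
Your proposal is correct and follows essentially the same route as the paper: transfer $a$ and $b$ to bar cochains via $\bar{\varphi}^{*}$, take the cup product there, and return via $\bar{\psi}^{*}$, with the odd--odd case reducing to the single free parameter $j_1$, the telescoping exponent $n-2$, and the sum $\sum j_1=\tfrac{n(n-1)}{2}$ exactly as in the paper's computation. The only (immaterial) difference is in the mixed-parity cases, where the paper observes that $\bar{\varphi}^{*}(a)\smile\bar{\varphi}^{*}(b)$ is already equal on the nose to $\bar{\varphi}^{*}(ab)$ and so never applies $\bar{\psi}^{*}$ there, while you run the same $\bar{\psi}^{*}$ evaluation uniformly in all cases.
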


\begin{proof}
Let $a\in HH^{2r+1}(A;A)$ and $b\in HH^{2s}(A;A)$. Then 
$$
\bar{\varphi}_{2r+1}^{*}(a) \smile \bar{\varphi}_{2s}^{*}(b)\in Hom (\bar{A}^{2(r+s)+1}, A)
$$ 
and

\begin{equation}
\resizebox{\textwidth}{!}{$
(\bar{\varphi}_{2r+1}^{*}(a) \smile \bar{\varphi}_{2s}^{*}(b))(\sigma^{i_1}, \dots , \sigma^{i_{2(r+s)+1}}) = (\bar{\varphi}_{2r+1}^{*}(a))(\sigma^{i_1}, \dots , \sigma^{i_{2r+1}})\cdot (\bar{\varphi}_{2s}^{*}(b))(\sigma^{i_{2r+2}}, \dots , \sigma^{i_{2(r+s)+1}})$} \notag
\end{equation}
\begin{align}
&= \begin{cases}
		(-1)^{r+s+1} ab i_1 \sigma^{\sum_{k=1} ^{2(r+s)+1} i_k-(r+s)n-1} &\text{if $i_{2k}+i_{2k+1}\geq n$ for $1\leq k \leq r+s$} \notag\\
		0 &\text{otherwise}
	\end{cases} \notag  \\
&=	(\bar{\varphi}_{2(r+s)+1}^{*}(ab))(\sigma^{i_1}, \dots , \sigma^{i_{2(r+s)+1}})\notag
\end{align}
Then the cup product is induced by multiplication in $A$ if $\left|a\right|$ or $\left|b\right|$ is even.

Assume now that $a\in HH^{2r+1}(A;A)$ and $b\in HH^{2s+1}(A;A)$. Then 
$$
\bar{\varphi}_{2r+1}^{*}(a) \smile \bar{\varphi}_{2s}^{*}(b)\in Hom (\bar{A}^{2(r+s+1)}, A)
$$ 
and

\begin{equation}
\resizebox{\textwidth}{!}{$
(\bar{\varphi}_{2r+1}^{*}(a) \smile \bar{\varphi}_{2s+1}^{*}(b))(\sigma^{i_1}, \dots , \sigma^{i_{2(r+s+1)}}) = (\bar{\varphi}_{2r+1}^{*}(a))(\sigma^{i_1}, \dots , \sigma^{i_{2r+1}})\cdot (\bar{\varphi}_{2s+1}^{*}(b))(\sigma^{i_{2r+2}}, \dots , \sigma^{i_{2(r+s+1)}})$} \notag
\end{equation}
\begin{align}
&= \begin{cases}
		(-1)^{r+s} ab i_1 i_{2r+2} \sigma^{\sum_{k=1} ^{2(r+s+1)} i_k-(r+s)n-2} &\text{\footnotesize{if $i_{2k}+i_{2k+1}\geq n$ for $1\leq k \leq r$ and}} \notag\\
		 &\text{\footnotesize{$i_{2k-1}+i_{2k}\geq n$ for $r+2\leq k \leq r+s+1$}} \notag\\ 
		0 &\text{otherwise}
	\end{cases} \notag 
\end{align} 
Applying $\bar{\psi}_{2(r+s+1)}^{*}$ to $f=\bar{\varphi}_{2r+1}^{*}(a) \smile \bar{\varphi}_{2s+1}^{*}(b)$, we have
\begin{align}
&\bar{\psi}_{2(r+s+1)}^{*}(f) \notag \\
&= \displaystyle \sum_{0\leq i_1,\dots ,i_{r+s+1} \leq n-1} (-1)^{r+s+1} f(\sigma^{i_1}, \sigma, \sigma^{i_2}, \dots ,\sigma^{i_{r+s+1}}, \sigma) \sigma^{(r+s+1)(n-1)-\sum_{k=1} ^{r+s+1} i_k} \notag \\ 
&= - \displaystyle \sum_{i_1 =1}^{n-1} ab i_1\sigma^{n-2} = - \frac{(n-1)n}{2} ab\sigma^{n-2} \notag
\end{align}
Therefore, if $char(R)=p>0$ and $n=mp$, we have
\begin{equation}\label{cupp}
	a\smile b = \begin{cases}
	 mab\sigma^{2m-2} &\text{if $p=2$} \\
	 0 &\text{if $p\neq 2$}  
		\end{cases} 
\end{equation}
\end{proof}

From these results, now we can described the cohomology ring. 

\begin{teo}\label{ringeven}
Let $R$ be a commutative ring and $A=R [\sigma] /(\sigma^{n}-1)$. Then
$$
	HH^{2*}(A;A)=R [x,z]/(x^{n}-1,nz)
$$ 
where $x\in HH^{0}(A;A)$ and $z\in HH^{2}(A;A)$. 
\end{teo}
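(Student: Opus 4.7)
The plan is to identify $HH^{2k}(A;A)$ explicitly as an $R$-module from the 2-periodic resolution, and then to assemble the desired ring presentation by invoking Lemma~\ref{cupeven}.

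First I would recall from complex (\ref{coh2Per}) that $HH^{0}(A;A)=A$ and $HH^{2k}(A;A)=A/(n\sigma^{n-1}A)$ for $k\geq 1$. The key simplification is that, because $\sigma^{n}=1$ makes $\sigma$ a unit in $A$, multiplying $n\sigma^{n-1}$ by $\sigma$ yields $n$, so $n\sigma^{n-1}A=nA$ as ideals of $A$. Therefore $HH^{2k}(A;A)\cong A/nA$ for every $k\geq 1$, and the underlying $R$-module of $HH^{2*}(A;A)$ is $A\oplus\bigoplus_{k\geq 1}A/nA$.

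Next I would pick the natural generators: $x:=\sigma\in HH^{0}(A;A)$ and $z:=[1]\in HH^{2}(A;A)$. By Lemma~\ref{cupeven} the cup product on even cohomology is induced by ordinary multiplication in $A$, so the monomial $x^{i}z^{k}$ represents the class of $\sigma^{i}$ in $HH^{2k}(A;A)$. The relations $x^{n}-1=0$ and $nz=0$ hold automatically, since $\sigma^{n}=1$ in $A$ and $n\cdot 1=0$ in $A/nA$. Consequently there is a well-defined graded $R$-algebra homomorphism
$$
\Phi \colon R[x,z]/(x^{n}-1,\;nz) \longrightarrow HH^{2*}(A;A),\qquad x^{i}z^{k}\longmapsto[\sigma^{i}]\in HH^{2k}(A;A).
$$

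Finally I would verify that $\Phi$ is a bijection in each degree. In degree $0$ it is the identification $R[x]/(x^{n}-1)=A=HH^{0}(A;A)$. In degree $2k\geq 2$ the source is $R$-spanned by $\{x^{i}z^{k}\}_{0\leq i\leq n-1}$ modulo $nz^{k}=0$, so it equals $(R/nR)[x]/(x^{n}-1)\cong A/nA$, which matches $HH^{2k}(A;A)$ under $\Phi$. The only nontrivial point is the unit argument $n\sigma^{n-1}A=nA$; the remaining verifications are routine bookkeeping in view of Lemma~\ref{cupeven}, so I do not anticipate a substantial obstacle for this even-degree statement. (The real work, namely extending to odd degrees and producing the BV-operator, is carried out separately in the following subsections.)
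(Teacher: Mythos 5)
Your proposal is correct and follows essentially the same route as the paper: identify the even cohomology groups from the $2$-periodic resolution, take $x=[\sigma]$ and $z=[1]$, and invoke Lemma~\ref{cupeven} to see that the even cup product is multiplication in $A$, which immediately yields the stated presentation. Your explicit observation that $n\sigma^{n-1}A=nA$ because $\sigma$ is a unit is a welcome clarification of the paper's relation $nx^{n-1}z=0$ (which is equivalent to $nz=0$ for the same reason), but it is not a departure from the paper's argument.
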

\begin{proof}
Consider $x\in HH^{0}(A;A)$ to be the coset $[\sigma]\in A$ and $z\in HH^{2}(A;A)$ the coset $[1]\in A$. By lemma \ref{cupeven}, the cup product for even degrees is induced by multiplication in $A$. Then $x$ generates $HH^{0}(A;A)$, and $HH^{2}(A;A)$ is generated by $z$ and $HH^{0}(A;A)$. In higher degrees $HH^{2i}(A;A)$ is generated by $z^{i}$ and $HH^{0}(A;A)$. The relations are given by $x^{n}-1=0$ and $nx^{n-1}z=0$.
\end{proof}

\begin{cor}\label{integraldomain}
Let $R$ be an integral domain with $char(R)\nmid n$. Then 	
$$
	HH^{*}(A;A)=R [x,z]/(x^{n}-1,nz)
$$
\end{cor}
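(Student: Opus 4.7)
The plan is to combine Theorem \ref{ringeven} with a vanishing argument for the odd-degree Hochschild cohomology. Theorem \ref{ringeven} already identifies the even part $HH^{2*}(A;A)$ with $R[x,z]/(x^n-1, nz)$ for any commutative ring $R$, so the only thing left to check is that $HH^{2k+1}(A;A)=0$ for every $k\geq 0$ under the integral domain hypothesis.

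First I recall from the computation of cohomology using the $2$-periodic resolution (\ref{coh2Per}) that
\[
HH^{2k+1}(A;A) = \mathrm{Ann}_A(n\sigma^{n-1}).
\]
Since $\sigma^n=1$ in $A$, the element $\sigma$ is a unit with $\sigma^{-1}=\sigma^{n-1}$, so multiplication by $\sigma^{n-1}$ is an $R$-linear automorphism of $A$. Consequently $\mathrm{Ann}_A(n\sigma^{n-1}) = \mathrm{Ann}_A(n)$, and the vanishing reduces to showing that multiplication by $n$ is injective on $A$.

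For this I use the hypotheses on $R$. Because $\mathrm{char}(R)\nmid n$, the integer $n$ is a nonzero element of $R$; because $R$ is an integral domain, $n$ is not a zero divisor in $R$. Now $A = R[\sigma]/(\sigma^n-1)$ is free of rank $n$ as an $R$-module, with basis $\{1,\sigma,\ldots,\sigma^{n-1}\}$, so multiplication by $n\in R$ on $A$ acts coordinate-wise and is therefore injective. Hence $\mathrm{Ann}_A(n)=0$, giving $HH^{2k+1}(A;A)=0$ for all $k\geq 0$.

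Combining this vanishing with Theorem \ref{ringeven} yields $HH^{*}(A;A) = HH^{2*}(A;A) = R[x,z]/(x^n-1, nz)$ as graded rings. I do not expect any real obstacle: the cup-product formula of Lemma \ref{cupodd} for two odd classes is irrelevant once the odd part vanishes, and the possible issue of whether the class $[1]\in A/nA$ representing $z$ really satisfies only the relation $nz=0$ (no further relation like $z\cdot x^n = \text{something}$) is already handled in Theorem \ref{ringeven}. The only small subtlety to state cleanly is the reduction $\mathrm{Ann}_A(n\sigma^{n-1})=\mathrm{Ann}_A(n)$, which is immediate from $\sigma$ being a unit.
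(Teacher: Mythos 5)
Your proof is correct and follows the same route as the paper: the paper's own argument is the one-line observation that $R$ being an integral domain with $\mathrm{char}(R)\nmid n$ forces $\mathrm{Ann}(n\sigma^{n-1})=0$, so the odd cohomology vanishes and Theorem \ref{ringeven} gives the result. You have simply spelled out the details (reducing to $\mathrm{Ann}_A(n)$ via the unit $\sigma$ and using freeness of $A$ over $R$), which is a faithful elaboration of the same argument.
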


\begin{proof}
Since $R$ is an integral domain with $char(R)\nmid n$, we have $Ann(n\sigma^{n-1})=0$.
\end{proof}

\begin{cor}
Let $R$ be a commutative ring such that $n\in R^{*}$. Then 	
$$
	HH^{*}(A;A)=R [x]/(x^{n}-1)=A
$$
\end{cor}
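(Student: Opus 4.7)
The plan is to read off the answer directly from the computation of the Hochschild (co)homology groups done earlier via the $2$-periodic resolution $\PP(A)$. Recall that the previous calculations showed
$$
HH^{i}(A;A) = \begin{cases} A & i=0 \\ Ann(n\sigma^{n-1}) & i=2k+1 \\ A/(n\sigma^{n-1}A) & i=2k>0 \end{cases}
$$
so the entire question reduces to analyzing the element $n\sigma^{n-1} \in A$ under the hypothesis $n \in R^{*}$.

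First I would observe that $\sigma$ is invertible in $A = R[\sigma]/(\sigma^{n}-1)$, with inverse $\sigma^{n-1}$; in particular $\sigma^{n-1}$ is a unit in $A$. Combined with the hypothesis that $n$ is a unit in $R$, this gives that $n\sigma^{n-1}$ is a unit in $A$. Consequently multiplication by $n\sigma^{n-1}$ on $A$ is an $A$-module isomorphism, so $\mathrm{Ann}(n\sigma^{n-1})=0$ and $n\sigma^{n-1} A = A$, whence $A/(n\sigma^{n-1}A)=0$. Therefore $HH^{i}(A;A)=0$ for every $i \geq 1$, and $HH^{0}(A;A)=A$.

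Finally, the ring structure is immediate from Theorem \ref{ringeven} (or from Lemmas \ref{cupeven} and \ref{cupodd}): in degree zero the cup product is just the multiplication of $A$, so the natural identification $A = R[x]/(x^{n}-1)$ with $x=[\sigma]$ gives the stated ring isomorphism $HH^{*}(A;A) = R[x]/(x^{n}-1) = A$. There is no real obstacle here; the only thing to check carefully is that $\sigma^{n-1}$ really is a unit, which is automatic from the defining relation $\sigma^{n}=1$, so the whole argument is one line once the preceding $2$-periodic computation is in hand.
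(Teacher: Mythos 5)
Your proposal is correct and follows essentially the same route as the paper: both arguments reduce to the observation that $n\sigma^{n-1}$ is a unit in $A$ (since $n\in R^{*}$ and $\sigma^{n-1}=\sigma^{-1}$), which kills $Ann(n\sigma^{n-1})$ in odd degrees and $A/(n\sigma^{n-1}A)$ in positive even degrees. The paper merely phrases the even-degree vanishing through the presentation of Theorem \ref{ringeven} (the relation $nx^{n-1}z=0$ forces $z=0$ when $n$ and $x^{n-1}$ are units), while you read it off the $2$-periodic complex directly; the content is identical.
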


\begin{proof}
Since $n\in R^{*}$, we have $Ann(n\sigma^{n-1})=Ann(\sigma^{n-1})=0$, and $x^{n-1}z=0$ implies that $z=0$.
\end{proof}

\begin{teo}\label{algebraHHZp}
Let $R$ be a commutative ring with $char(R)=p>0$ and $A=R [\sigma] /(\sigma^{n}-1)$ with $n=mp$. If $p\neq 2$, or $p=2$ and $m$ is even. Then 
$$
HH^{*}(A;A)=R [x,y,z]/(x^{n}-1,y^{2}) 
$$
If $p=2$ and $m$ is odd. Then 
$$
HH^{*}(A;A)=R [x,y,z]/(x^{n}-1,y^{2}-x^{n-2}z) 
$$
where $x\in HH^{0}(A;A)$, $y\in HH^{1}(A;A)$ and $z\in HH^{2}(A;A)$. 
\end{teo}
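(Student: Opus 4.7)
The plan is to extend the even-degree description from Theorem \ref{ringeven} to the full Hochschild cohomology ring by introducing an odd-degree generator $y \in HH^{1}(A;A)$ and computing $y^{2}$. First I would observe that since $char(R) = p$ divides $n$, the integer $n$ is zero in $R$; this has two immediate consequences: the relation $nz = 0$ from Theorem \ref{ringeven} becomes trivial, and $Ann(n\sigma^{n-1}) = A$, so that every odd cohomology group $HH^{2s+1}(A;A) \cong A$ is free of rank $n$ over $R$, just like the even groups.

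Next I would let $y \in HH^{1}(A;A)$ be the class represented by $1 \in A$ under the identification $HH^{1}(A;A) \cong A$. By the mixed-parity case of Lemma \ref{cupodd}, cup product with $x$ or $z$ is induced by multiplication in $A$, so the monomials $x^{i} z^{s}$ and $x^{i} y z^{s}$ with $0 \le i \le n-1$ and $s \ge 0$ already exhaust $HH^{*}(A;A)$ as an $R$-module. The relation $x^{n} = 1$ is immediate from $\sigma^{n} = 1$. The essential remaining task is therefore to determine $y^{2} \in HH^{2}(A;A)$; once that is done, a degree-wise rank count against the proposed presentation shows no further relations are needed.

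The main obstacle is the computation of $y^{2}$. The odd-odd case of Lemma \ref{cupodd} gives
\[
y \smile y = -\frac{(n-1)n}{2}\,\sigma^{n-2} \in HH^{2}(A;A) \cong A,
\]
which under the identification $\sigma^{n-2} \leftrightarrow x^{n-2} z$ reads $-\frac{(n-1)n}{2}\, x^{n-2} z$. To finish I would split into three cases matching the hypotheses. If $p$ is odd then $2 \in R^{*}$ and $n = 0$ in $R$ force $-\frac{(n-1)n}{2} = 0$ in $R$, so $y^{2} = 0$. If $p = 2$ and $m$ is even, then $-\frac{(n-1)n}{2} = -(2m-1)m \equiv 0 \pmod{2}$, so again $y^{2} = 0$. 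If $p = 2$ and $m$ is odd, then $-(2m-1)m \equiv 1 \pmod{2}$, giving $y^{2} = x^{n-2} z$. Combining these cases with the generator/rank analysis above yields the two stated presentations of $HH^{*}(A;A)$.
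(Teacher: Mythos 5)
Your proposal is correct and follows essentially the same route as the paper: build on Theorem \ref{ringeven} for the even part, take $y=[1]\in HH^{1}(A;A)\cong A$, use the mixed-parity case of Lemma \ref{cupodd} to see that $x^{i}z^{s}$ and $x^{i}yz^{s}$ span everything, and then read off $y^{2}=-\tfrac{(n-1)n}{2}x^{n-2}z$ from the odd-odd case, with the same three-way case analysis on $p$ and $m$. The only cosmetic difference is that you make explicit the module identifications ($Ann(n\sigma^{n-1})=A$ and the degree-wise rank count) that the paper leaves implicit.
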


\begin{proof}
By theorem \ref{ringeven}, we know that 
$$
	HH^{2*}(A;A)=R [x,z]/(x^{n}-1)
$$
Consider $y\in HH^{1}(A;A)$ to be the coset $[1]\in A$. Since cup product of an odd degree cohomology class and an even degree cohomology class is induced by multiplication in $A$, $HH^{1}(A;A)$ is generated by $y$ and $HH^{0}(A;A)$. By (\ref{cupp}), for $p\neq 2$, or $p=2$ and $m$ even the cup product in odd degrees is zero. Therefore, $y^{2}=0$ and we have
$$
	HH^{*}(A;A)=R [x,y,z]/(x^{n}-1,y^{2})
$$
For $p=2$ and $m$ odd, $y^{2}$ is the coset $[\sigma^{n-2}]\in A$ then  $y^{2}-x^{n-2}z=0$, and 
$$
	HH^{*}(A;A)=R [x,y,z]/(x^{n}-1,y^{2}-x^{n-2}z)
$$ 
\end{proof}

\begin{remk}
These calculations agree with the ones presented in \cite{SolotarCibils} and \cite{Holm}.
\end{remk}

%
%
%

\subsection{BV-Algebra Structure}

\begin{teo}\label{BVZn}
Let $R$ be an integral domain with $char(R)\nmid n$ and $A=R [\sigma] /(\sigma^{n}-1)$. Then the canonical Frobenius form of the group ring induces a BV-algebra structure on $HH^{*}(A;A)$ given by  
\begin{align}
	HH^{*}(A;A)&=R [x,z]/(x^{n}-1,nz) \notag \\
		\Delta(a)&=0 \quad \forall a\in HH^{*}(A;A) \notag
\end{align}	
\end{teo}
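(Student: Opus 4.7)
The plan is to reduce the theorem to a simple parity argument combining earlier results in the paper.

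First, I would invoke Example \ref{frob}: the group ring $A = R[\Z/n\Z]$ is a symmetric $R$-algebra with the canonical Frobenius form $\langle a,b\rangle = \varepsilon(ab)$. By Theorem \ref{HHBV}, this immediately guarantees that $HH^*(A;A)$ admits a BV-algebra structure whose operator $\Delta$ is the dual of the Connes $B$-operator. So the existence of a BV-structure coming from the canonical form is automatic; the only real task is to compute $\Delta$.

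Second, Corollary \ref{integraldomain} already identifies the underlying graded algebra as $R[x,z]/(x^n-1,nz)$. The decisive observation I would re-examine in the explicit computation is that, since $R$ is an integral domain and $char(R)\nmid n$, the element $n\sigma^{n-1}\in A$ is a non-zero-divisor, so $Ann(n\sigma^{n-1}) = 0$, and hence
$$HH^{2k+1}(A;A) \cong Ann(n\sigma^{n-1}) = 0$$
for every $k \geq 0$. Thus $HH^*(A;A)$ is concentrated in even degrees.

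Finally, the vanishing of $\Delta$ is forced purely by parity: $\Delta$ has degree $-1$, so on any class in $HH^{2k}(A;A)$ it lands in $HH^{2k-1}(A;A) = 0$, and on $HH^{2k+1}(A;A) = 0$ it is trivially zero. Hence $\Delta \equiv 0$ on all of $HH^*(A;A)$, completing the identification of the BV-algebra structure. I do not foresee any genuine obstacle here; the proof hinges entirely on the fact that cohomology is concentrated in even degrees, which leaves no room for the odd-degree-shifting operator $\Delta$ to be anything other than zero.
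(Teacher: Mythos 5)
Your argument is correct and is essentially the paper's own proof: the paper likewise deduces $\Delta\equiv 0$ from Corollary \ref{integraldomain}, since $R$ being an integral domain with $char(R)\nmid n$ forces $Ann(n\sigma^{n-1})=0$ and hence concentration of $HH^{*}(A;A)$ in even degrees, leaving no room for the degree $-1$ operator. The paper merely supplements this with an optional direct cochain-level computation of $\Delta$ on the generators $x$, $z$, $zx$, $z^{2}$ using the explicit Frobenius form, which your parity argument renders unnecessary.
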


\begin{proof}
By corollary \ref{integraldomain}, we have $HH^{*}(A;A)=HH^{2*}(A;A)$. Then $\Delta(a)=0$ for all $a\in HH^{*}(A;A)$. However, this can be proved directly from the definition of $\Delta$, and the fact that in a BV-algebra we have the following equation
\begin{align}\label{Dabc}
	\Delta(abc)=&\,\Delta(ab)c+(-1)^{|a|}a\Delta(bc)+(-1)^{(|a|-1)|b|}b\Delta(ac) \\
	&-\Delta(a)bc-(-1)^{|a|}a\Delta(b)c-(-1)^{|a|+|b|}ab\Delta(c) \notag
\end{align}
Since the BV-operator is defined over the bar complex, we need the cochains that represent the generators. The class $x$ is represented by the cochain
$$
	\bar{\varphi}^{*}_{0}(\sigma)(1)=\sigma
$$  
and the class $z$ by
$$
	 \bar{\varphi}^{*}_{2}(1)(\sigma^{i},\sigma^{k})= \begin{cases}
	 -\sigma^{i+k-n} &\text{if $i+k\geq n$} \\
	 0 &\text{otherwise}  
		\end{cases} 
$$
Now, taking $\lbrace 1,\sigma,\dots ,\sigma^{n-1}\rbrace$ as a basis for $A$ and $\lbrace 1,\sigma^{n-1},\dots ,\sigma\rbrace$ as the dual basis induced by the canonical Frobenius form (\ref{frob}), we have
\begin{align*}
	\Aboxed{&\Delta(x)=\, \, 0} \; \text{ by degree.}  \notag \\	
	\Delta(\bar{\varphi}^{*}_{2}(1))(\sigma^{i})=\, &\sum_{k=0} ^{n-1} \langle 1,\bar{\varphi}^{*}_{2}(1)(\sigma^{k},\sigma^{i})\rangle \sigma^{n-k} - \sum_{k=0} ^{n-1} \langle 1,\bar{z}(\sigma^{i},\sigma^{k})\rangle \sigma^{n-k} \notag \\
	\Aboxed{&\Delta(z) =\; 0} \notag \\
	\Delta(\bar{\varphi}^{*}_{2}(1)\smile \bar{\varphi}^{*}_{0}(\sigma))(\sigma^{i})=\, &\sum_{k=0} ^{n-1} \langle 1,\bar{\varphi}^{*}_{2}(\sigma)(\sigma^{k},\sigma^{i})\rangle \sigma^{n-k} - \sum_{k=0} ^{n-1} \langle 1,\bar{z}\bar{x}(\sigma^{i},\sigma^{k})\rangle \sigma^{n-k} \notag \\
	\Aboxed{&\Delta(zx) =\; 0} \notag \\
	\Delta(\left(\bar{\varphi}^{*}_{2}(1)\right)^{2})(\sigma^{i}, \sigma^{j}, \sigma^{h})=\, &\sum_{k=0} ^{n-1} \langle 1,\bar{\varphi}^{*}_{4}(1)(\sigma^{k},\sigma^{i},\sigma^{j},\sigma^{h})\rangle \sigma^{n-k} - \sum_{k=0} ^{n-1} \langle 1,\bar{\varphi}^{*}_{4}(1)(\sigma^{i},\sigma^{j},\sigma^{h},\sigma^{k})\rangle \sigma^{n-k}\notag \\
	+&\sum_{k=0} ^{n-1} \langle 1,\bar{\varphi}^{*}_{4}(1)(\sigma^{j},\sigma^{h},\sigma^{k},\sigma^{i})\rangle \sigma^{n-k} - \sum_{k=0} ^{n-1} \langle 1,\bar{\varphi}^{*}_{4}(1)(\sigma^{h},\sigma^{k},\sigma^{i},\sigma^{j})\rangle \sigma^{n-k}\notag \\
	\Aboxed{&\Delta(z^{2}) =\, \, 0} \notag 	
\end{align*} 
In the last case, $\langle 1,\cdot \rangle\neq 0$ only if $k+i+j+h-2n=n$, i.e., $k+i+j+h=3n$, but also $k+i,j+h,i+j,h+k\geq n$. Therefore, all the coefficients are zero. Using equation (\ref{Dabc}) and induction on powers of $x$ and $z$, we have $\Delta(a)=0$ for all $a\in HH^{*}(A;A)$.
\end{proof}

\begin{teo}\label{BVZncharp}
Let $R$ be a commutative ring with $char(R)=p>0$ and $A=R [\sigma] /(\sigma^{n}-1)$ with $n=mp$. If $p\neq 2$, or $p=2$ and $m$ is even. Then the canonical Frobenius form of the group ring induces a BV-algebra structure on $HH^{*}(A;A)$ given by 
\begin{align}
HH^{*}(A;A)&=R [x,y,z]/(x^{n}-1,y^{2}) \notag \\
\Delta (z^{k}x^{l}) &=0  \notag \\
\Delta (z^{k}yx^{l}) &= (l-1)z^{k}x^{l-1} \notag 
\end{align}
If $p=2$ and $m$ is odd. Then as a BV-algebra
\begin{align}
HH^{*}(A;A)&=R [x,y,z]/(x^{n}-1,y^{2}-x^{n-2}z) \notag \\
\Delta (z^{k}x^{l}) &=0  \notag \\
\Delta (z^{k}yx^{l}) &= (l-1)z^{k}x^{l-1} \notag 
\end{align}
where $x\in HH^{0}(A;A)$, $y\in HH^{1}(A;A)$ and $z\in HH^{2}(A;A)$. 
\end{teo}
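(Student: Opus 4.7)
The plan is to combine the algebra description from Theorem \ref{algebraHHZp} with explicit cochain-level evaluations of the BV operator, using the formula from the proposition preceding Theorem \ref{HHBV}. By Lemmas \ref{cupeven} and \ref{cupodd}, the class $z^k x^l$ is represented by the cochain $\bar{\varphi}^*_{2k}(\sigma^l)$ and $z^k y x^l$ by $\bar{\varphi}^*_{2k+1}(\sigma^l)$. Under the canonical Frobenius form, the basis $\{1,\sigma,\dots,\sigma^{n-1}\}$ has dual $(\sigma^i)^{\vee} = \sigma^{n-i}$, and $\langle 1, \sigma^p\rangle = 1$ iff $p \equiv 0 \pmod{n}$.

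First I would verify $\Delta(z^k x^l) = 0$. The case $k=0$ is automatic by degree. For $k \geq 1$, the mechanism from Theorem \ref{BVZn} carries over: a summand in the BV formula applied to $\bar{\varphi}^*_{2k}(\sigma^l)$ is nonzero only when the pairing conditions $p_{2s-1}+p_{2s}\geq n$ hold simultaneously with the divisibility $\sum p_s + l \equiv 0 \pmod{n}$ forced by the $\langle 1,\cdot\rangle$ constraint, and the same arithmetic check as there rules out such configurations. Next I would compute $\Delta(yx^l)$ using $\bar{\varphi}^*_1(\sigma^l)(\sigma^k) = -k\sigma^{k+l-1}$:
\[
\Delta(\bar{\varphi}^*_1(\sigma^l)) = \sum_{k=0}^{n-1}\langle 1, -k\sigma^{k+l-1}\rangle\,\sigma^{n-k}.
\]
The pairing is nonzero precisely when $k \equiv 1-l \pmod{n}$, and reducing the surviving coefficient modulo $n$ (which vanishes in $R$ because $p \mid n$) gives $\Delta(yx^l) = (l-1)x^{l-1}$. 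In particular $\Delta(y) = -x^{n-1}$.

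To extend to $\Delta(z^k y x^l)$, I would invoke equation (\ref{Dabc}) with $a = z^k$, $b = y$, $c = x^l$. The vanishing from the previous step collapses it to
\[
\Delta(z^k y x^l) = \Delta(z^k y)\,x^l + z^k\Delta(yx^l) - z^k\Delta(y)\,x^l,
\]
and plugging in the values already computed reduces the theorem to showing $\Delta(z^k y) = -z^k x^{n-1}$, the $l=0$ case. This is the main obstacle. I would proceed by induction on $k$: the base $k=0$ was done above, and the inductive step applies equation (\ref{Dabc}) to $z\cdot z^{k-1}\cdot y$ to reduce $\Delta(z^k y)$ to $z^{k-1}\Delta(zy)$, so it suffices to establish $\Delta(zy) = -z x^{n-1}$ independently. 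I would obtain this by a direct cochain evaluation of $\Delta(\bar{\varphi}^*_3(1))$, enumerating the triples $(p_1,p_2,p_3)$ satisfying the pairing inequality $p_2+p_3 \geq n$ in each of the three shifted positions $t=0,1,2$ together with $\sum p_s \equiv 1\pmod{n}$, summing the contributions, and then pulling back through $\bar{\psi}^*_2$ to land on $-\sigma^{n-1}$ in $HH^2 = A/(n\sigma^{n-1}A)$ after the coefficient $n$ collapses. Finally, both branches of the theorem share the same formula for $\Delta$ because the distinguishing relation $y^2 = x^{n-2}z$ (resp.\ $y^2 = 0$) places $y^2$ in the even part, where $\Delta$ already vanishes by the first step.
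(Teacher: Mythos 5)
Your proposal is correct and follows essentially the same route as the paper: represent the generators by the explicit cochains $\bar{\varphi}^*_r(\sigma^l)$, evaluate the dual Connes operator directly on the low-degree classes (the paper does $\Delta(y)$, $\Delta(xy)$, $\Delta(zy)$ where you do $\Delta(yx^l)$ and $\Delta(zy)$, using the same arithmetic $2n+1\equiv 1$ collapse from $char(R)=p$, $n=mp$), and then propagate to all monomials via equation~(\ref{Dabc}) and induction on the powers of $x$, $y$, $z$. The minor reorganizations (handling $\Delta(yx^l)$ for all $l$ in one computation, and making the induction reducing $\Delta(z^k y)$ to $\Delta(zy)$ explicit) do not change the substance of the argument.
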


\begin{proof}
As in the previous theorem, we need the cochains that represent the generators. The class $y$ is represented by the cochain 
$$
	\bar{\varphi}^{*}_{1}(1)(\sigma^{i})=-i\sigma^{i-1}
$$ 
\begin{align*}
	\Delta(\bar{\varphi}^{*}_{1}(1))(1)=\, &\sum_{j=0} ^{n-1} \langle 1,\bar{\varphi}^{*}_{1}(1)(\sigma^{j})\rangle \sigma^{n-j} = \sum_{i=0} ^{n-1} - j\langle 1,\sigma^{j-1}\rangle \sigma^{n-j} \notag \\	
	= & - \sigma^{n-1} = -\bar{\varphi}^{*}_{0}(\sigma^{n-1})(1) \notag \\
	\Aboxed{&\Delta(y)=\, -x^{n-1}} \notag \\	
	\Delta(\bar{\varphi}^{*}_{0}(\sigma)\smile \bar{\varphi}^{*}_{1}(1))(1)=\, &\sum_{j=0} ^{n-1} \langle 1,\bar{\varphi}^{*}_{1}(\sigma)(\sigma^{j})\rangle \sigma^{n-j} = \sum_{i=0} ^{n-1} - j\langle 1,\sigma^{j}\rangle \sigma^{n-j} \notag \\
	\Aboxed{&\Delta(xy)=\, 0} \notag \\
	\Delta(\bar{\varphi}^{*}_{2}(1)\smile \bar{\varphi}^{*}_{1}(1))(\sigma^{i}, \sigma^{j})=\, &\sum_{k=0} ^{n-1} \langle 1,\bar{\varphi}^{*}_{3}(1)(\sigma^{k},\sigma^{i},\sigma^{j})\rangle \sigma^{n-k} \notag \\
	+ &\sum_{k=0} ^{n-1} \langle 1,\bar{\varphi}^{*}_{3}(1)(\sigma^{i},\sigma^{j},\sigma^{k})\rangle \sigma^{n-k}\notag \\
	+&\sum_{k=0} ^{n-1} \langle 1,\bar{\varphi}^{*}_{3}(1)(\sigma^{j},\sigma^{k},\sigma^{i})\rangle \sigma^{n-k} \notag 		
\end{align*}
If $i+j<n$, $\langle 1,\cdot \rangle= 0$ for all $k$. When $i+j\geq n$, $\langle 1,\cdot \rangle\neq 0$ only if $k+i+j-1-n=n$, i.e., $k=2n+1-(i+j)$. Therefore,
\begin{align*}
	\Delta(\bar{\varphi}^{*}_{3}(1))(\sigma^{i}, \sigma^{j})=\, &(2n+1-(i+j)) \sigma^{i+j-n-1} + i\sigma^{i+j-n-1} + j\sigma^{i+j-n-1}\notag \\
	= &(2n+1) \sigma^{i+j-n-1} = \sigma^{i+j-n-1} = -\bar{\varphi}^{*}_{2}(\sigma^{n-1})(\sigma^{i}, \sigma^{j}) \tag{$char(R)=p$ and $n=mp$}	\\
	\Aboxed{&\Delta(zy)=\, -zx^{n-1}} \notag	
\end{align*}  
Using equation (\ref{Dabc}) and induction on powers of $x$, $y$ and $z$, we have
\begin{align}
\Delta (z^{k}x^{l}) &=0  \notag \\
\Delta (z^{k}yx^{l}) &= (l-1)z^{k}x^{l-1} \notag 
\end{align}
\end{proof}

Since in a BV-algebra, the Gerstenhaber bracket is defined by the following equation 
\begin{align}\label{aDb}
	\lbrace a,b\rbrace = -(-1)^{\left|a\right|}( \Delta(ab) - \Delta(a)b - (-1)^{|a|}a\Delta(b))
\end{align}
It follows that

\begin{cor}
Let $A=R [\sigma] /(\sigma^{n}-1)$ with $R$ an integral domain and $char(R)\nmid n$. The Gerstenhaber bracket on $HH^{*}(A;A)$ is given by
$$
	\lbrace a, b\rbrace =0 \quad \forall a,b\in HH^{*}(A;A) 
$$	
\end{cor}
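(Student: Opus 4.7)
The plan is to invoke Theorem \ref{BVZn} together with the BV-identity recalled as equation (\ref{aDb}); essentially no further work is required. First I would observe that, under the hypotheses of the corollary ($R$ an integral domain with $\mathrm{char}(R)\nmid n$), Corollary \ref{integraldomain} gives $HH^{*}(A;A)=R[x,z]/(x^{n}-1,nz)$, concentrated in even degrees, and Theorem \ref{BVZn} furnishes the stronger conclusion that the BV-operator $\Delta$ induced by the canonical Frobenius form of the group ring vanishes identically on $HH^{*}(A;A)$.

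Then for any $a,b\in HH^{*}(A;A)$, since $ab\in HH^{*}(A;A)$ as well, we have $\Delta(a)=\Delta(b)=\Delta(ab)=0$. Substituting into the BV-formula
\[
\lbrace a,b\rbrace=-(-1)^{|a|}\bigl(\Delta(ab)-\Delta(a)b-(-1)^{|a|}a\Delta(b)\bigr)
\]
yields $\{a,b\}=0$. Since the corollary is purely a formal consequence of Theorem \ref{BVZn} and the definition of a BV-algebra, there is no genuine obstacle; the only thing to verify is that one is allowed to read off the bracket from equation (\ref{aDb}), which is exactly the BV-axiom of Definition \ref{DefBValg}.
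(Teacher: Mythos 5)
Your proposal is correct and matches the paper's own (implicit) argument: the paper derives this corollary directly from Theorem \ref{BVZn}, where $\Delta$ vanishes identically, combined with the BV-identity (\ref{aDb}), exactly as you do. No further comment is needed.
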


\begin{cor}
Let $R$ be a commutative ring with $char(R)=p>0$ and $A=R [\sigma] /(\sigma^{n}-1)$ with $n=mp$. The Gerstenhaber bracket on $HH^{*}(A;A)$ is given by
\begin{align}
\lbrace z^{k_1}x^{l_1},z^{k_2}x^{l_2}\rbrace &= 0  \notag \\ 
	\lbrace z^{k_1}x^{l_1},z^{k_2}yx^{l_2}\rbrace &= -l_1 z^{k_1 + k_2}x^{l_1 + l_2 -1}  \notag \\
	\lbrace z^{k_1}yx^{l_1}, z^{k_2}yx^{l_2}\rbrace &= (l_2 - l_1)z^{k_1 + k_2}yx^{l_1 + l_2 -1}  \notag
\end{align}
\end{cor}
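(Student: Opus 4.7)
The plan is to apply the BV identity (\ref{aDb}),
\[
\{a,b\} = -(-1)^{|a|}\bigl(\Delta(ab) - \Delta(a)\,b - (-1)^{|a|}\,a\,\Delta(b)\bigr),
\]
feeding in the explicit $\Delta$ computed in Theorem \ref{BVZncharp} and the multiplication rules from Theorem \ref{algebraHHZp}. Since an additive $R$-generating set for $HH^{*}(A;A)$ consists of monomials $z^{k}x^{l}$ and $z^{k}yx^{l}$, the three cases below determine the bracket on the whole algebra by $R$-bilinearity, and no further cochain-level computation is required.

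For $a = z^{k_1}x^{l_1}$ and $b = z^{k_2}x^{l_2}$ (both of even total degree), each of $\Delta(a)$, $\Delta(b)$, $\Delta(ab)$ vanishes since Theorem \ref{BVZncharp} kills every monomial in $x$ and $z$ alone; hence $\{a,b\}=0$.

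For $a = z^{k_1}x^{l_1}$ even and $b = z^{k_2}yx^{l_2}$ odd, one has $ab = z^{k_1+k_2}yx^{l_1+l_2}$, so $\Delta(ab) = (l_1+l_2-1)\,z^{k_1+k_2}x^{l_1+l_2-1}$, while $\Delta(a)=0$ and $a\,\Delta(b) = (l_2-1)\,z^{k_1+k_2}x^{l_1+l_2-1}$. Substituting with $(-1)^{|a|}=1$, the bracket collapses to $-\bigl[(l_1+l_2-1)-(l_2-1)\bigr]z^{k_1+k_2}x^{l_1+l_2-1} = -l_1\,z^{k_1+k_2}x^{l_1+l_2-1}$. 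For $a = z^{k_1}yx^{l_1}$ and $b = z^{k_2}yx^{l_2}$ (both odd), the product $ab$ contains $y^{2}$, which in both branches of Theorem \ref{algebraHHZp} (whether $y^{2}=0$, or $y^{2}=x^{n-2}z$ in the $p=2$, $m$ odd case) lies in the even-degree subalgebra; so $\Delta(ab)=0$ by the first line of Theorem \ref{BVZncharp}. With $(-1)^{|a|}=-1$, the BV identity reduces to $\{a,b\} = -\Delta(a)\,b + a\,\Delta(b) = \bigl[-(l_1-1)+(l_2-1)\bigr]z^{k_1+k_2}yx^{l_1+l_2-1} = (l_2-l_1)\,z^{k_1+k_2}yx^{l_1+l_2-1}$.

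I do not anticipate a serious obstacle; the entire argument is substitution together with careful sign-tracking of the parities of $|a|$ and $|b|$. The only place to be cautious is the $p=2$, $m$ odd branch of the last case, where $y^{2}=x^{n-2}z\neq 0$; here one must check that $ab$ still lies in the kernel of $\Delta$, but this is immediate from $\Delta(z^{k}x^{l})=0$ in Theorem \ref{BVZncharp}, so the third bracket formula holds uniformly across both ring-structure cases of Theorem \ref{algebraHHZp}.
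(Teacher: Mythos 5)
Your proposal is correct and is exactly the paper's argument: the paper derives this corollary directly from the BV identity (\ref{aDb}) applied to the operator $\Delta$ of Theorem \ref{BVZncharp} and the ring structure of Theorem \ref{algebraHHZp}, which is precisely the substitution-and-sign-tracking you carry out (and your checks of the three cases, including the $p=2$, $m$ odd branch where $y^2=x^{n-2}z$ still lies in $\ker\Delta$, are all accurate).
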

\begin{remk}
These calculations agree with the Gerstenhaber bracket presented in \cite{Sanchez} and \cite{AltBracket}.
\end{remk}

For the cyclic group of order $p$ prime, $\Z /p \Z$, we have that the group ring $\Fp\left[\Z /p \Z \right]= \Fp [\sigma] /(\sigma^{p}-1)$  is naturally isomorphic, as algebra, to a truncated polynomial ring $ \Fp [x] /(x^{p})$. In \cite{AADD}, the authors transfer the canonical Frobenius form of the group ring to the truncated polynomial ring and get the following Frobenius form
$$
\varepsilon \left( \sum_{i=0} ^{p-1} \alpha_i x^i \right) = \sum_{i=0} ^{p-1} (-1)^{i}\alpha_i    
$$

Using this Frobenius form, the BV-algebra structure is given by  

\begin{teo}
Let $A=\Fp [x] /(x^{p})$ with $p$ an odd prime. Then the canonical Frobenius form of the group ring induces a BV-algebra structure on $HH^{*}(A;A)$ given by
\[
	HH^{*}(A;A)=\Fp [x,v,t]/(x^{p},v^{2})
\]
\begin{align}
&\Tilde{\Delta} (t^{l}x^{k}) =0  \notag \\
&\Tilde{\Delta} (t^{k}vx^{2l}) = 2lt^{k}x^{2l-1} + \sum_{i=2l} ^{p-1} (-1)^{i+1}t^{k}x^{i} \notag \\
&\Tilde{\Delta} (t^{k}vx^{2l+1}) = (2l+1)t^{k}x^{2l} + \sum_{i=2l+1} ^{p-1} (-1)^{i}t^{k}x^{i} \notag 
\end{align}

where $x\in HH^{0}(A;A)$, $v\in HH^{1}(A;A)$ and $t\in HH^{2}(A;A)$.
\end{teo}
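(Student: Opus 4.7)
The plan is to deduce this formula from Theorem \ref{BVZncharp} by transferring along an isometry of Frobenius algebras, rather than recomputing with the cochain formula for $\tilde{\Delta}$ (which would require inverting the non-antidiagonal Gram matrix of the new Frobenius form). The key observation is that $\phi:\mathbb{F}_p[\sigma]/(\sigma^p-1)\to A=\mathbb{F}_p[x]/(x^p)$, $\sigma\mapsto 1+x$, is an $\mathbb{F}_p$-algebra isomorphism because $(1+x)^p=1+x^p=1$ in $A$. First I would verify that $\phi$ is an isometry between the canonical group-ring Frobenius form of Example \ref{frob} and the form $\varepsilon(\sum\alpha_i x^i)=\sum(-1)^i\alpha_i$. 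Since $(1+x)^k=\sum_{i=0}^{k}\binom{k}{i}x^i$, one has $\varepsilon((1+x)^k)=\sum_i\binom{k}{i}(-1)^i=(1-1)^k=0$ for $1\le k\le p-1$, while $\varepsilon((1+x)^0)=\varepsilon((1+x)^p)=1$, so $\varepsilon(\phi(\sigma^i)\phi(\sigma^j))=1$ iff $p\mid i+j$, matching the canonical form. Naturality of the BV-structure of Theorem \ref{HHBV} under isomorphisms of symmetric algebras then gives an isomorphism of BV-algebras $HH^*(\mathbb{F}_p[\sigma]/(\sigma^p-1))\cong HH^*(A;A)$.

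Next I would translate Theorem \ref{BVZncharp} applied with $n=p$, $m=1$, $p$ odd. The algebra presentation $\mathbb{F}_p[\sigma,y,z]/(\sigma^p-1,y^2)$ pulls back to $\mathbb{F}_p[x,v,t]/(x^p,v^2)$, where $v=\phi_*(y)$, $t=\phi_*(z)$, and the cohomology-ring part of the theorem is immediate. The BV formulas $\Delta(z^k\sigma^l)=0$ and $\Delta(z^k y\sigma^l)=(l-1)z^k\sigma^{l-1}$ become
\begin{align}
\tilde{\Delta}(t^k(1+x)^l) &= 0, \notag \\
\tilde{\Delta}(t^k v(1+x)^l) &= (l-1)\,t^k(1+x)^{l-1}, \notag
\end{align}
where $(1+x)^{-1}$ is interpreted as $(1+x)^{p-1}=\sum_{i=0}^{p-1}(-1)^ix^i$ in $A$ (using $(1+x)^p=1$).

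The remaining step is to re-express these formulas in the basis $\{x^j\}$. For the first, the change of basis matrix $\binom{l}{j}$ from $\{(1+x)^l\}$ to $\{x^j\}$ is the Pascal matrix, which is invertible, so linearity forces $\tilde{\Delta}(t^k x^j)=0$. For the second, I substitute $x^j=\sum_{l=0}^{j}\binom{j}{l}(-1)^{j-l}(1+x)^l$, apply $\tilde{\Delta}$ termwise, and split the factor $(l-1)=l-1$. The $l$-piece collapses via $l\binom{j}{l}=j\binom{j-1}{l-1}$ to $jx^{j-1}$, using the identity $\sum_{u=0}^{j-1}\binom{j-1}{u}(-1)^{j-1-u}(1+x)^u=((1+x)-1)^{j-1}=x^{j-1}$. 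The $-1$-piece equals $-(1+x)^{-1}\sum_{l}\binom{j}{l}(-1)^{j-l}(1+x)^{l}=-x^j(1+x)^{-1}=-\sum_{i=j}^{p-1}(-1)^{i-j}x^i$. Combining and splitting by the parity of $j=2l$ versus $j=2l+1$ yields exactly the two displayed formulas.

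The main technical obstacle is the binomial manipulation isolating $jx^{j-1}$ from the first piece; once that derivative-style collapse is in hand, the rest is a direct multiplication by the closed form of $(1+x)^{p-1}$. The vanishing cases $\tilde{\Delta}(t^k x^j)=0$ fall out of the first transferred formula by linearity, and extension of $\tilde{\Delta}$ to arbitrary monomials and products follows from the BV-derivation identity (\ref{Dabc}) together with a simple induction on the $t$-exponent.
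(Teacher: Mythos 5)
Your argument is correct, but it reaches the result by a genuinely different route than the paper. The paper states this theorem without proof, deferring the computation to \cite{AADD} (where $\tilde\Delta$ is evaluated directly on cochains with respect to the transferred form $\varepsilon$ and its dual basis), and only afterwards proves the corollary that $\phi$ is a BV-isomorphism by checking $\phi\tilde\Delta=\Delta\phi$ on generators against Theorem \ref{BVZncharp}. You invert this logic: you first verify that $\sigma\mapsto 1+x$ is an isometry of symmetric algebras --- your computation $\varepsilon((1+x)^k)=(1-1)^k=0$ for $1\le k\le p-1$ together with $\varepsilon((1+x)^0)=\varepsilon((1+x)^p)=1$ is exactly what is needed to match the canonical form of Example \ref{frob} --- deduce the BV-isomorphism from naturality of the construction in Theorem \ref{HHBV}, and then transport the formulas of Theorem \ref{BVZncharp} (with $n=p$, $m=1$) through the change of basis $x^j=((1+x)-1)^j$. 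I checked both binomial collapses: $l\binom{j}{l}=j\binom{j-1}{l-1}$ does give $\sum_{l}l\binom{j}{l}(-1)^{j-l}(1+x)^{l-1}=jx^{j-1}$, and $\binom{p-1}{i}\equiv(-1)^i\pmod p$ gives $-x^j(1+x)^{-1}=-\sum_{i=j}^{p-1}(-1)^{i-j}x^i$; splitting on the parity of $j$ reproduces the two displayed formulas exactly, and $\tilde\Delta(t^lx^k)=0$ follows by linearity since the powers $(1+x)^l$, $0\le l\le p-1$, form a basis of $A$. Your approach buys a derivation that is self-contained within this paper (no appeal to \cite{AADD}), avoids the cochain-level computation against the non-antidiagonal Gram matrix of $\varepsilon$, and renders the subsequent corollary automatic rather than a separate verification. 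The one step you should spell out is the naturality claim itself: an algebra isomorphism intertwining the symmetric forms commutes with the Connes operator on normalized chains and with the duality identification of $Hom(\bar A^*,A^{\vee})$ with $Hom(\bar A^*,A)$, hence with the two BV-operators; the paper never records this functoriality explicitly, so it deserves a sentence in your write-up.
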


\begin{cor}
There is an isomorphism of BV-algebras
$$
	\phi: HH^{*}(\Fp [x] /(x^{p}); \Fp [x] /(x^{p}))\overset{\cong}{\longrightarrow} HH^{*}(\Fp\left[\Z /p \Z \right];\Fp\left[\Z /p \Z \right])
$$
\end{cor}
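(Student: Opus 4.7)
The plan is to exploit the naturality of the BV-construction under isomorphisms of symmetric algebras. First, I would define the algebra isomorphism $\Phi: \Fp[x]/(x^{p}) \xrightarrow{\cong} \Fp[\sigma]/(\sigma^{p}-1)$ by $x \mapsto \sigma - 1$, which is well-defined in characteristic $p$ because $(\sigma-1)^{p} = \sigma^{p} - 1 = 0$, and is clearly bijective. Functoriality of Hochschild cohomology then produces an isomorphism $\phi : HH^{*}(\Fp[x]/(x^{p})) \to HH^{*}(\Fp[\Z/p\Z])$ preserving the cup product and Gerstenhaber bracket.

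The key step is to observe that the BV-operator from Theorem \ref{HHBV} is constructed canonically from the pair (symmetric algebra, Frobenius form): one dualizes the Connes $B$-operator and transports it across the identification $A \cong A^{\vee}$ induced by the Frobenius form. By construction in \cite{AADD}, the Frobenius form on $\Fp[x]/(x^{p})$ is precisely the pullback of the canonical group-ring form via $\Phi$. Since the Connes operator is natural under algebra isomorphisms, and $\Phi$ carries an $\Fp$-basis of $\Fp[x]/(x^{p})$ to a basis of $\Fp[\Z/p\Z]$ whose dual basis (for the pushed-forward Frobenius form) coincides with the image of the original dual basis, the dual-basis formula defining $\Delta$ transports term by term. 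Hence $\tilde{\Delta}$ and $\Delta$ correspond under $\phi$.

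For a concrete sanity check I would also verify the formulas on generators. Under $\phi$ the correspondence reads $x_{\text{gp}} \leftrightarrow 1 + x_{\text{tp}}$, $y \leftrightarrow v$, $z \leftrightarrow t$. The identity $\tilde{\Delta}(t^{l}x^{k}) = 0$ from the preceding theorem immediately gives $\tilde{\Delta}(t^{k}(1+x)^{l}) = 0$ by $\Fp$-linearity, matching $\Delta(z^{k}x_{\text{gp}}^{l}) = 0$ of Theorem \ref{BVZncharp}. For the odd part, expanding $(1+x)^{l}$ and applying the classical identity $\sum_{i=0}^{s}\binom{l}{i}(-1)^{i} = (-1)^{s}\binom{l-1}{s}$ should collapse the double sum arising in $\tilde{\Delta}(t^{k}v(1+x)^{l})$ to $l\,t^{k}(1+x)^{l-1} - t^{k}(1+x)^{l-1} = (l-1)\,t^{k}(1+x)^{l-1}$, matching $\Delta(z^{k}yx_{\text{gp}}^{l}) = (l-1)z^{k}x_{\text{gp}}^{l-1}$.

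The hard part will be the naturality claim in the second paragraph. Morally it is clear, but a careful argument requires tracking the dual basis through $\Phi$ at the cochain level and checking that the explicit formula for $\Delta$ commutes with the push-forward. This is bookkeeping once the two Frobenius forms are identified via pullback, yet it is the load-bearing step. The direct binomial computation sketched in the last paragraph provides an independent verification and, if one prefers a hands-on approach, can serve as the actual proof without appeal to abstract naturality.
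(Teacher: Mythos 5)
Your proposal is correct, but it proves the corollary by a genuinely different route than the paper. The paper's proof is entirely computational: it defines $\phi$ on generators by $\phi(x)=x-1$, $\phi(v)=y$, $\phi(t)=z$, verifies the intertwining relation $\phi\tilde{\Delta}=\Delta\phi$ by hand on the generators and on the pairwise products $t^{2}$, $tx$, $tv$, $vx$ using the explicit formulas for both operators together with the congruence $\sum_{i=k}^{p-1}\binom{i}{k}=\binom{p}{k+1}\equiv 0 \pmod{p}$, and then bootstraps to all monomials by induction through the seven-term identity (\ref{Dabc}), which is legitimate since both sides are BV-algebras and $\phi$ is a ring isomorphism. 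Your primary argument instead invokes naturality: an algebra isomorphism that intertwines the Frobenius forms transports the dual of the Connes operator, hence the whole BV-structure; the load-bearing input --- that the form $\varepsilon(\sum\alpha_i x^i)=\sum(-1)^i\alpha_i$ on $\Fp[x]/(x^p)$ is exactly the pullback of the canonical group-ring form along $x\mapsto\sigma-1$ --- is precisely how the paper introduces that form (citing \cite{AADD}), and is a one-line check since $\varepsilon_{\mathrm{gp}}((\sigma-1)^i)=(-1)^i$. What your approach buys is conceptual economy and reusability: the same transport lemma disposes of the $p=2$ case (which the paper proves by a separate computation) and of any future change of presentation, with no binomial manipulations needed. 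What the paper's approach buys is self-containedness at the level of the already-computed formulas, at the cost of the (\ref{Dabc}) bootstrap. Your fallback computation is also sound: the identity $\sum_{i=0}^{s}\binom{l}{i}(-1)^{i}=(-1)^{s}\binom{l-1}{s}$ does collapse $\tilde{\Delta}(t^{k}v(1+x)^{l})$ to $(l-1)t^{k}(1+x)^{l-1}$, and unlike the paper's check it covers all monomials directly without appeal to the BV-identity; either of your two arguments would stand alone as a complete proof once the routine verification that the chain-level maps commute with the duality isomorphisms is written out.
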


\begin{proof}
The isomorphism $\phi$ is defined as follows
$$
	\phi(x)=x-1, \qquad \phi(v)=y \qquad \text{and} \qquad \phi(t)=z
$$
It is clear that $\phi$ is a ring isomorphism. To verify that it is an isomorphism of BV-algebras, we need to check that $\phi \Tilde{\Delta}=\Delta \phi$.
\begin{itemize}
\item $\phi \Tilde{\Delta}(x)=\phi(0)=0=\Delta (x-1)=\Delta \phi(x)$.
\item $\Tilde{\Delta} (v) = \sum_{i=0} ^{p-1} (-1)^{i+1}x^{i}$, then
\begin{align}
   \phi\Tilde{\Delta} (v) &= \sum_{i=0} ^{p-1} (-1)^{i+1}(x-1)^{i} = \sum_{i=0} ^{p-1} \sum_{k=0}^{i}(-1)^{k+1} \binom {i}{k}x^{k} \notag \\
   &= \sum_{k=0} ^{p-1} (-1)^{k+1}\sum_{i=k}^{p-1} \binom {i}{k}x^{k} \notag \\
   &\equiv -x^{p-1} \tag{mod $p$} \\
   &=\Delta \phi(v) \notag
\end{align} 
the equivalence module $p$ is due to 
$$
	\sum_{i=k}^{p-1} \binom {i}{k} = \binom {p}{k+1} \equiv 0 \qquad \text{(mod $p$)}
$$
for $k+1\neq 0$ or $k+1\neq p$.
\item $\phi \Tilde{\Delta}(t)=\phi(0)=0=\Delta (z)=\Delta \phi(t)$.
\item $\phi \Tilde{\Delta}(t^{2})=\phi(0)=0=\Delta (z^{2})=\Delta \phi(t^{2})$.
\item $\phi \Tilde{\Delta}(tx)=\phi(0)=0=\Delta (zx)-\Delta(z)=\Delta(z(x-1))=\Delta \phi(tx)$.
\item $\Tilde{\Delta} (tv) = \sum_{i=0} ^{p-1} (-1)^{i+1}tx^{i}$, then
\begin{align}
   \phi\Tilde{\Delta} (tv) &= \sum_{i=0} ^{p-1} (-1)^{i+1}z(x-1)^{i} = z\sum_{i=0} ^{p-1} \sum_{k=0}^{i}(-1)^{k+1} \binom {i}{k}x^{k} \notag \\
   &= z\sum_{k=0} ^{p-1} (-1)^{k+1}\sum_{i=k}^{p-1} \binom {i}{k}x^{k} \notag \\
   &\equiv -zx^{p-1} \tag{mod $p$} \\
   &= \Delta \phi(tv) \notag
\end{align} 
\item $\Tilde{\Delta} (vx) = \sum_{i=0} ^{p-1} (-1)^{i}x^{i}$, then
\begin{align}
   \phi\Tilde{\Delta} (vx) &= \sum_{i=0} ^{p-1} (-1)^{i}(x-1)^{i} = \sum_{i=0} ^{p-1} \sum_{k=0}^{i}(-1)^{k} \binom {i}{k}x^{k} \notag \\
   &= \sum_{k=0} ^{p-1} (-1)^{k}\sum_{i=k}^{p-1} \binom {i}{k}x^{k} \notag \\
   &\equiv x^{p-1} \tag{mod $p$} \\
   &= \Delta(yx)-\Delta(y)=\Delta(y(x-1))=\Delta \phi(vx) \notag
\end{align} 
\end{itemize}

Since both are BV-algebras, formula (\ref{Dabc}) holds and $\phi \Tilde{\Delta}=\Delta \phi$.
\end{proof}

And for $p=2$, 

\begin{teo}
Let $A=\FF [x] /(x^{2})$. Then the canonical Frobenius form of the group ring induces a BV-algebra structure on $HH^{*}(A;A)$ given by
$$
	HH^{*}(A;A)=\FF [x,v,t]/(x^{2},v^{2}-t)\cong \Lambda(x)\otimes \FF [v] 
$$
$$
	\Tilde{\Delta}(v^{k}x^{l})=k(1+x)v^{k-1}
$$	
where $x\in HH^{0}(A;A)$, $v\in HH^{1}(A;A)$ with $|x|=0$ and $|v|=1$.
\end{teo}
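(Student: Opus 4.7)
The plan is to reduce to the group-ring case via the algebra isomorphism $\FF[\sigma]/(\sigma^2-1) \xrightarrow{\sim} \FF[x]/(x^2)$ given by $\sigma \mapsto x+1$ (a valid ring map since $(x+1)^2 = x^2+1 \equiv 1$ in characteristic $2$), then transport the BV-structure on Hochschild cohomology of the group ring already computed in Theorem~\ref{BVZncharp}.

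For the ring structure, I apply Theorem~\ref{algebraHHZp} with $n=2$, $p=2$, $m=1$ (odd). Since $x^{n-2} = x^0 = 1$, the presentation is $HH^*(\FF[\Z/2\Z]) = \FF[x',y,z]/((x')^2-1,\, y^2 - z)$ with $|x'|=0$, $|y|=1$, $|z|=2$. Setting $\phi(x) = x'+1$, $\phi(v) = y$, $\phi(t) = z$, the relation $(x'+1)^2 = 1$ pulls back to $x^2 = 0$ and $y^2 = z$ pulls back to $v^2 = t$, yielding $HH^*(\FF[x]/(x^2)) = \FF[x,v,t]/(x^2,\, v^2-t)$; eliminating $t = v^2$ gives $\Lambda(x)\otimes\FF[v]$.

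For the BV-operator, the Frobenius form on $\FF[x]/(x^2)$ is by construction (see \cite{AADD}) the pull-back of the canonical Frobenius form on $\FF[\Z/2\Z]$ along $\sigma\mapsto x+1$, so $\phi$ is an isomorphism of BV-algebras and $\tilde\Delta = \phi^{-1}\Delta\phi$. Theorem~\ref{BVZncharp} at $n=2$ gives $\Delta(z^k(x')^l) = 0$ and $\Delta(z^k y (x')^l) = (l-1) z^k (x')^{l-1}$, where $(x')^{-1} = x'$ thanks to $(x')^2 = 1$. Because $x^2=0$ in the truncated ring, only $l\in\{0,1\}$ is relevant. I would split on the parity of $k$: for $k=2m$ even, $\phi(v^k x^l) = z^m(x'+1)^l$ is a linear combination of classes of the form $z^m(x')^i$, so $\Delta = 0$, matching $k(1+x)v^{k-1} = 0$; for $k=2m+1$ odd, $\phi(v^k x^l) = y z^m(x'+1)^l$ and a short computation in both cases $l=0,1$ gives $\Delta = x' z^m$, whose $\phi$-preimage is $(x+1)v^{2m} = (1+x)v^{k-1}$, matching $k(1+x)v^{k-1}$ for $k$ odd.

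The main subtlety is the nontriviality of $v^2$: because $m=1$ is odd, the BV-operator on $HH^*(\FF[\Z/2\Z])$ has $\Delta(y) = x'$ and $\Delta(yz^m) = x'z^m$ (unlike the odd-prime case where $v^2 = 0$), which translates through $\phi$ into the nonzero factor $(1+x)$ in $\tilde\Delta$. Consistency with $v^2 = t$ must then be checked (for instance $\tilde\Delta(v^2) = 2(1+x)v = 0 = \tilde\Delta(t)$, which falls out because $2 \equiv 0 \pmod 2$), after which extension from the generators $x, v, t$ to arbitrary monomials via the BV identity (\ref{Dabc}) is routine.
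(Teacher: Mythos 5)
Your proof is correct, but it takes a genuinely different route from the paper. The paper states this theorem without an explicit proof: the displayed operator $\Tilde{\Delta}$ is meant to be obtained, exactly as in the odd-prime theorem just above it and in \cite{AADD}, by a direct cochain-level computation of the dual Connes operator on $\FF[x]/(x^2)$ using the transferred Frobenius form $\varepsilon(\alpha_0+\alpha_1 x)=\alpha_0+\alpha_1$, followed by the extension to all monomials via equation (\ref{Dabc}); the corollary that follows then verifies \emph{a posteriori} that $\phi(x)=x-1$, $\phi(v)=y$ intertwines $\Tilde{\Delta}$ with the group-ring operator. You run the logic in the opposite direction: you take the group-ring computation of Theorems \ref{algebraHHZp} and \ref{BVZncharp} (with $n=2$, $p=2$, $m=1$ odd, so $y^{2}=z$, $\Delta(z^{m}y)=x'z^{m}$ and $\Delta(z^{m}yx')=0$) as the input and transport it through $\sigma\mapsto x+1$, using that the form on the truncated ring is by construction the pullback of the canonical form on $\FF[\Z/2\Z]$. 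Your parity analysis in $k$ and the cases $l\in\{0,1\}$ reproduce $k(1+x)v^{k-1}$ exactly, and the check $\Tilde{\Delta}(v^{2})=0=\Tilde{\Delta}(t)$ is the right consistency test. What your route buys is economy: no new computation on $\FF[x]/(x^2)$ is needed, and the paper's subsequent corollary becomes an immediate consequence rather than a separate hand verification. What it costs is an appeal to the naturality of the BV-structure of Theorem \ref{HHBV} under isomorphisms of symmetric algebras that match the Frobenius forms; this is true and routine (the Connes operator and the duality isomorphism are both transported by such an isomorphism), but the paper never states it explicitly --- indeed its corollaries check the compatibility generator by generator --- so you should record this naturality as a short lemma to make your argument self-contained.
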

 
\begin{cor}
There is an isomorphism of BV-algebras
$$
	\phi: HH^{*}(\FF [x] /(x^{2}); \FF [x] /(x^{2}))\overset{\cong}{\longrightarrow} HH^{*}(\FF\left[\Z /2 \Z \right];\FF\left[\Z /2 \Z \right])
$$
\end{cor}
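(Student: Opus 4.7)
The plan is to follow the blueprint used for the odd prime case: define a candidate isomorphism $\phi$ on generators, check that it respects the defining relations to get a ring isomorphism, and then verify the compatibility $\phi\tilde{\Delta} = \Delta\phi$ directly on a generating set of monomials. Concretely, I would set $\phi(x) = x+1$ (which equals $x-1$ in characteristic $2$) and $\phi(v) = y$. The source relation $v^2 = t$ forces $\phi(t) = y^2$, consistent with the target because for $n=2$ the relation $y^2 = x^{n-2}z = z$ of Theorem~\ref{algebraHHZp} identifies $y^2$ with $z$. The other source relation $x^2 = 0$ is preserved since $\phi(x)^2 = (x+1)^2 = x^2 + 1 = 1 + 1 = 0$ in $\FF[x]/(x^2-1)$. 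The obvious inverse $y\mapsto v$, $x\mapsto x+1$ shows $\phi$ is a ring isomorphism.

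Next I verify $\phi\tilde{\Delta} = \Delta\phi$ on each monomial $v^k x^l$; since $x^2 = 0$ only $l\in\{0,1\}$ occur, and the theorem provides the $l$-independent formula $\tilde{\Delta}(v^k x^l) = k(1+x)v^{k-1}$. Applying $\phi$ yields $\phi\tilde{\Delta}(v^k x^l) = k\,x\,y^{k-1}$, using $\phi(1+x) = 1+(x+1)=x$. To compare with $\Delta\phi(v^k x^l) = \Delta(y^k(x+1)^l)$, I write $k=2m+r$ with $r\in\{0,1\}$ so that $y^k = z^m y^r$, and invoke Theorem~\ref{BVZncharp}: the operator $\Delta$ kills $z^m x^l$ and sends $z^m y x^l$ to $(l-1)z^m x^{l-1}$, interpreting $x^{-1} = x^{n-1} = x$ when $l=0$. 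A short parity case-split in $k$ and the two values of $l$ matches both sides cleanly: when $k$ is even both sides vanish by the factor of $k$ or of $2$, and when $k$ is odd one obtains $z^m x$ on both sides.

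Because $\phi$ is a ring map and both $\tilde{\Delta}$ and $\Delta$ satisfy the BV three-term identity (\ref{Dabc}), the compatibility on the generating monomials suffices to extend $\phi\tilde{\Delta} = \Delta\phi$ to all of $HH^*$. The main obstacle is the bookkeeping at the boundary $l=0$: one must correctly read the formula $(l-1)z^m x^{l-1}$ via $x^{-1} = x^{n-1}$, and note that in characteristic $2$ the sign in $\Delta(y) = -x^{n-1}$ collapses to give $\Delta(z^m y) = z^m x$, exactly matching $\phi\tilde{\Delta}(v^{2m+1})$.
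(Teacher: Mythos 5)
Your proposal is correct and follows essentially the same route as the paper: the same map $\phi(x)=x-1=x+1$, $\phi(v)=y$, the same ring-isomorphism check, and the same verification of $\phi\tilde{\Delta}=\Delta\phi$ on the monomial basis $v^{k}x^{l}$. The only cosmetic difference is that the paper first restates the target operator as $\Delta(y^{k})=ky^{k-1}x$, $\Delta(y^{k}x)=0$ using $y^{2}=z$, whereas you substitute $y^{2m+r}=z^{m}y^{r}$ into Theorem \ref{BVZncharp} directly; your closing appeal to identity (\ref{Dabc}) is harmless but unnecessary since the monomials already span.
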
 

\begin{proof}
For $p=2$ and $n=2$, we have
$$
HH^{*}(\FF\left[\Z /2 \Z \right];\FF\left[\Z /2 \Z \right])=R [x,y,z]/(x^{2}-1,y^{2}-z)\cong \FF [y]\otimes \FF [x]/(x^{2}-1)
$$
and the BV-operator, $\Delta$, is given by
\begin{align}
\Delta (y^{k}x) &=0  \notag \\
\Delta (y^{k}) &= ky^{k-1}x \notag 
\end{align}
The isomorphism $\phi$ is defined as follows
$$
	\phi(x)=x-1 \qquad \text{and} \qquad \phi(v)=y 
$$
It is clear that $\phi$ is a ring isomorphism. Now,
$$
\phi \Tilde{\Delta}(v^{k}x^{l})=\phi(k(1+x)v^{k-1})=kxy^{k-1}=\Delta(y^{k}(x-1)^{l})=\Delta \phi(v^{k}x^{l})
$$
Therefore, $\phi$ is an isomorphism of BV-algebras.
\end{proof}


\section{BV-Algebra Structure for Finite Abelian Groups} 

Let $G$ be a finite abelian group. Then $G$ can be decomposed as follows 
$$
	G \cong \Z /p_1^{\alpha_1} \Z \oplus \Z /p_2^{\alpha_2} \Z \oplus \cdots \oplus \Z /p_k^{\alpha_k} \Z
$$
with the property that $\alpha_i \leq \alpha_{i+1}$, if $p_i = p_{i+1}$. Therefore,
$$
	R[G] \cong R[\Z /p_1^{\alpha_1} \Z] \otimes R[\Z /p_2^{\alpha_2} \Z] \otimes \cdots \otimes R[\Z /p_k^{\alpha_k} \Z]
$$
and by corollary \ref{tensorBViso}, we have 

\begin{teo}\label{BVfiniteabelian}
Let $R$ be a field and $G$ a finite abelian group. Then as BV-algebras 
$$
	HH^*(R[G];R[G]) \cong HH^*(R[\Z /p_1^{\alpha_1} \Z];R[\Z /p_1^{\alpha_1} \Z]) \otimes \cdots \otimes HH^*(R[\Z /p_k^{\alpha_k} \Z]; R[\Z /p_k^{\alpha_k} \Z])
$$
where the BV-structure for each factor is given by theorem \ref{BVZn} or \ref{BVZncharp}.
\end{teo}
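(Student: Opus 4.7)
The plan is to reduce the theorem to an iterated application of Corollary \ref{tensorBViso}. First I would recall the decomposition
\[
R[G] \;\cong\; R[\Z/p_1^{\alpha_1}\Z] \otimes R[\Z/p_2^{\alpha_2}\Z] \otimes \cdots \otimes R[\Z/p_k^{\alpha_k}\Z]
\]
as $R$-algebras, which follows from the fundamental theorem for finite abelian groups together with the standard isomorphism $R[H_1 \oplus H_2] \cong R[H_1] \otimes R[H_2]$.

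Next I would verify the hypotheses of Corollary \ref{tensorBViso} for each pair of factors. Since $R$ is a field it is certainly a commutative hereditary ring, and moreover every $R$-module is projective, so the projectivity hypothesis on $HH^*$ is automatic. By Example \ref{frob} every group ring $R[\Z/p_i^{\alpha_i}\Z]$ is a symmetric $R$-algebra with respect to the canonical Frobenius form coming from the augmentation, and it is finite-dimensional because the group is finite. Hence each pair of factors satisfies the hypotheses of Corollary \ref{tensorBViso}.

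I would then run an induction on $k$. For the inductive step I need to know that the partial tensor product $A_j := R[\Z/p_1^{\alpha_1}\Z] \otimes \cdots \otimes R[\Z/p_j^{\alpha_j}\Z]$ is again finite-dimensional and symmetric, so that Corollary \ref{tensorBViso} can be applied to the pair $(A_j, R[\Z/p_{j+1}^{\alpha_{j+1}}\Z])$. Finite-dimensionality is clear; for symmetry one uses the standard fact that the tensor product of two Frobenius forms is a Frobenius form for the tensor product algebra, and that symmetry of each factor form gives symmetry of the product form. Iterating the corollary $k-1$ times produces an isomorphism of BV-algebras
\[
HH^*(R[G];R[G]) \;\cong\; \bigotimes_{i=1}^{k} HH^*(R[\Z/p_i^{\alpha_i}\Z];R[\Z/p_i^{\alpha_i}\Z]).
\]

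Finally, each tensor factor is described by Theorem \ref{BVZn} when $\mathrm{char}(R) \nmid p_i^{\alpha_i}$ and by Theorem \ref{BVZncharp} when $\mathrm{char}(R) \mid p_i^{\alpha_i}$, which completes the proof. The only mild subtlety — and really the sole non-formal point — is ensuring that the symmetric structure used at each inductive step is compatible with the one used in Corollary \ref{tensorBViso}; this is handled by taking the Frobenius form of $A_j$ to be the tensor product of the canonical Frobenius forms of its factors, which matches the form under which Theorem \ref{HHBV} is applied.
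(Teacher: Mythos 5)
Your proposal follows exactly the paper's route: decompose $R[G]$ as the tensor product of the cyclic group rings $R[\Z/p_i^{\alpha_i}\Z]$ and apply Corollary \ref{tensorBViso} (iteratively), the paper simply stating this in one line where you spell out the induction and the verification that the partial tensor products remain finite-dimensional symmetric algebras. Your added care about the hypotheses --- hereditariness of a field, $R$-projectivity of the cohomologies, and the tensor product of Frobenius forms --- is correct and only makes explicit what the paper leaves implicit.
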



\section{BV-Algebra Structure on \texorpdfstring{\boldmath{$HH^{*}(\Z[\Z/n\Z]\otimes\Z[\Z/m\Z])$}}{HH*(R[Z/nZxZ/mZ])}}

By theorem \ref{injection}, we have an injection of BV-algebras 
$$
	HH^{*}(\Z[\Z/n\Z])\otimes HH^{*}(\Z[\Z/m\Z])\hookrightarrow HH^{*}(\Z[\Z/n\Z]\otimes\Z[\Z/m\Z])
$$
where the BV-operator on the left hand side is trivial. Nevertheless, the BV-operator on the right hand side is highly non-trivial as follows 

\begin{teo}
Let $A=\Z[\Z/n\Z]$ and $B=\Z[\Z/m\Z]$ with $n=km$. Then as a BV-algebra,
\begin{align}
	HH^{*}(A\otimes B;A\otimes B)&\cong \frac{\Z[x,t,a,b,c]}{(x^n-1, t^m-1, na, mb, mc, c^2)}  \notag \\
\Delta(x^{i}t^{j}a^{l}b^{r}c^{s}) &= sx^{i-1}t^{j}a^{l}b^{r}((i-1)b-jka)  \notag 
\end{align}
in all cases except when $m$ is even and $k$ is odd, in which case we get
\begin{align}
	HH^{*}(A\otimes B;A\otimes B)&\cong \frac{\Z[x,t,a,b,c]}{(x^n-1, t^m-1, na, mb, mc, c^2-\frac{m}{2}x^{n-2}ab(b+ka))}  \notag \\
\Delta(x^{i}t^{j}a^{l}b^{r}c^{s}) &= sx^{i-1}t^{j}a^{l}b^{r}((i-1)b-jka)  \notag 
\end{align}
where $x,t\in HH^{0}(A\otimes B;A\otimes B)$, $a,b\in HH^{2}(A\otimes B;A\otimes B)$ and $c\in HH^{3}(A\otimes B;A\otimes B)$.
\end{teo}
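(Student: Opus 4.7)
The plan proceeds in four stages, reusing the machinery of Section 3.

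\textbf{(1) Additive structure via K\"unneth.} By Corollary \ref{integraldomain}, $HH^{*}(A) = \Z[x,a]/(x^{n}-1, na)$ and $HH^{*}(B) = \Z[t,b]/(t^{m}-1, mb)$, with $|x|=|t|=0$ and $|a|=|b|=2$. Since $\Z$ is hereditary the K\"unneth short exact sequence
\[
0 \to \bigoplus_{r+s=p} HH^{r}(A)\otimes HH^{s}(B) \to HH^{p}(A\otimes B) \to \bigoplus_{r+s=p+1} \mathrm{Tor}_{1}^{\Z}(HH^{r}(A), HH^{s}(B)) \to 0
\]
holds. The Tor term vanishes unless both $r,s\geq 2$ are even, in which case it equals $\mathrm{Tor}_{1}^{\Z}(A/nA, B/mB) \cong (A\otimes B)/m(A\otimes B)$, using $\gcd(n,m)=m$. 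This introduces a single new generator $c \in HH^{3}(A\otimes B)$ together with its higher-degree products $a^{l}b^{r}c$, and forces the relation $mc = 0$.

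\textbf{(2) Ring structure and $c^{2}$.} By Theorem \ref{injection} there is an injection $HH^{*}(A)\otimes HH^{*}(B)\hookrightarrow HH^{*}(A\otimes B)$ of graded algebras, supplying the relations $x^{n}=1, t^{m}=1, na=0, mb=0$. The key remaining computation is $c^{2}$. Using the tensor product $\PP(A)\otimes\PP(B)$ of 2-periodic resolutions, with bigraded cochain complex $C^{r,s}=A\otimes B$, the degree-three kernel condition forces $c$ to be represented by the pair $(x_{1,2}, x_{2,1}) = (1,\, -k\sigma^{n-1}\tau)\in C^{1,2}\oplus C^{2,1}$ satisfying $n\sigma^{n-1}x_{1,2}+m\tau^{m-1}x_{2,1}=0$ (writing $\sigma,\tau$ for the generators of $A$ and $B$). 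I would then apply the diagonal approximation of Proposition \ref{TenDiag} to compute $c\smile c$ and reduce modulo coboundaries. The outcome splits by parity: generically $c^{2}$ is a coboundary, hence $c^{2}=0$; when $m$ is even and $k$ is odd a residual $\tfrac{m}{2}$-torsion term survives, yielding $c^{2}=\tfrac{m}{2}x^{n-2}ab(b+ka)$. This is the tensor manifestation of the $y^{2}=x^{n-2}z$ phenomenon of Theorem \ref{algebraHHZp}.

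\textbf{(3) BV-operator.} Since $A\otimes B = \Z[\Z/n\Z \times \Z/m\Z]$ is a symmetric algebra (group ring of a finite abelian group), Theorem \ref{HHBV} endows $HH^{*}(A\otimes B)$ with a BV-structure dual to the Connes operator with respect to the canonical Frobenius form. Dualizing Proposition \ref{TensorConnes}, the restriction of $\Delta^{A\otimes B}$ to the image of the K\"unneth injection equals $\Delta^{A}\otimes\mathrm{id}+\mathrm{id}\otimes\Delta^{B}$, which vanishes by Theorem \ref{BVZn}; consequently $\Delta(x^{i}t^{j}a^{l}b^{r})=0$. The only new datum is $\Delta(c)$: I would pull $c$ back to the bar complex via $\psi_{*}\otimes\psi_{*}$, apply the dual Connes operator against the basis $\{\sigma^{i}\tau^{j}\}$ with dual basis $\{\sigma^{n-i}\tau^{m-j}\}$, and return via $\varphi_{*}\otimes\varphi_{*}$, obtaining $\Delta(c)=-x^{n-1}b$. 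For general monomials I would iterate formula (\ref{Dabc}), using $\Delta=0$ on the even subalgebra and computing the brackets $\{x^{i}t^{j}a^{l}b^{r},c\}$ to recover the factor $(i-1)b-jka$ (with $x^{-1}$ interpreted as $x^{n-1}$); the values for $s\geq 2$ either vanish (generic case, $c^{2}=0$) or reduce to the even subalgebra where $\Delta$ acts by the Leibniz rule on $c^{2}\in S$.

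\textbf{Main obstacle.} The delicate step is the explicit cup-product computation $c\smile c$: the diagonal approximation on the mixed-bidegree cocycle $(1,-k\sigma^{n-1}\tau)$ produces several bidegree-$6$ contributions whose $\tfrac{n(n-1)}{2}$-type coefficients, analogous to those appearing in Lemma \ref{cupodd}, detect precisely the parity obstruction that separates the generic case from the case $m$ even, $k$ odd. Careful sign bookkeeping through the shuffle of Proposition \ref{TenDiag}, together with identifying which of the resulting cocycles are coboundaries and which survive as the $\tfrac{m}{2}x^{n-2}ab(b+ka)$ correction, is where the argument is most easily derailed.
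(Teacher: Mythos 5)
Your proposal is correct and follows essentially the same route as the paper: the Künneth sequence over the hereditary ring $\Z$ for the additive structure, the cocycle representative $1\otimes 1 - k x^{n-1}\otimes t$ for $c$, the cup-product computation of $c^2$ whose $\tfrac{n(n-1)}{2}$-type coefficients produce the parity dichotomy, the tensor decomposition of the dual Connes operator giving $\Delta = \Delta^A\otimes\mathrm{id} + \mathrm{id}\otimes\Delta^B$ on the Künneth image, and induction via the seven-term BV identity. The only cosmetic difference is that the paper computes $\Delta(xc)$, $\Delta(tc)$, $\Delta(ac)$, $\Delta(bc)$ directly at the cochain level rather than passing through the brackets $\{x^it^ja^lb^r,c\}$, which amounts to the same work.
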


\begin{proof}
By K\"{u}nneth Theorem, there is an isomorphim of modules 
\begin{align}
HH^{i}(A\otimes B; A\otimes B)\cong &\bigoplus_{r+s=i} HH^{r}(A; A)\otimes HH^{s}(B;B) \notag \\
&\bigoplus \bigoplus_{r+s=i+1} Tor_1 ^{\Z} (HH^{r}(A; A), HH^{s}(B;B)) \notag
\end{align}
Since,
$$
	HH^{*}(A;A)=\Z [x,a]/(x^{n}-1,na) \qquad \text{and} \qquad	HH^{*}(B;B)=\Z [t,b]/(t^{m}-1,mb)
$$
where $|x|=|t|=0$ and $|a|=|b|=2$. All Tor groups vanish except when $r$ and $s$ are both even. In order to calculate $Tor_1 ^{\Z} (A/nx^{n-1}A, B/mt^{m-1}B)$, we use the following $\Z$-projective resolution 
$$
  0 \rightarrow A \xrightarrow{nx^{n-1}} A \xrightarrow{} A/nx^{n-1}A \rightarrow 0 
$$
Applying $-\otimes B/mt^{m-1}B$, we get 
$$
  0 \rightarrow A\otimes B/mt^{m-1}B \xrightarrow{nx^{n-1}\otimes id} A \otimes B/mt^{m-1}B \rightarrow 0 
$$ 
Thus, 
\begin{align}
Tor_1 ^{\Z} (A/nx^{n-1}A, B/mt^{m-1}B)&=Ker(nx^{n-1}\otimes id)=Ker(kx^{n-1}\otimes m\cdot \,) \notag \\
&= A\otimes B/mt^{m-1}B \notag
\end{align}
Therefore,
$$
	HH^i(A\otimes B) = \begin{cases}
					A\otimes B &\text{if $i=0$} \notag\\
					0 &\text{if $i=1$} \notag \\
					A/nx^{n-1}A\otimes B \oplus A\otimes B/mt^{m-1}B &\text{if $i=2j$} \notag \\
					\quad \oplus \displaystyle\bigoplus_{l=1}^{j-1} A/nx^{n-1}A\otimes B/mt^{m-1}B \notag \\
					\displaystyle\bigoplus_{l=1}^{j} A\otimes B/mt^{m-1}B &\text{if $i=2j+1$} \notag \\
			  \end{cases} \notag
$$
Since $$HH^{*}(A; A)\otimes HH^{*}(B;B) \hookrightarrow HH^{*}(A\otimes B; A\otimes B)$$ is an injection of BV-algebras, we only need to find a generator for the odd dimensions. Let $c\in HH^{3}(A\otimes B; A\otimes B)$ to be the coset $[1\otimes 1]\in A\otimes B/mt^{m-1}B$. Consider  
$$
	c = 1\otimes 1 - kx^{n-1}\otimes t
$$
to be the representative of $[1\otimes 1]$ in the total complex which calculate the Tor group. In the tensor product of the bar resolutions, $c$ is represented by
$$
 c=\overline{\varphi_{_A}}_{1}^{*}(1)\otimes \overline{\varphi_{_B}}_{2}^{*}(1)-k \overline{\varphi_{_A}}_{2}^{*}(x^{n-1})\otimes \overline{\varphi_{_B}}_{1}^{*}(t)
$$ 
Since $HH^{2*}(A\otimes B;A\otimes B)\cong HH^{2*}(A;A)\otimes HH^{2*}(B;B)$. Let $y\in HH^{2i}(A;A)$ and $z\in HH^{2j}(B;B)$. Then 
\begin{align}
 y\otimes z \smile c \Rightarrow & \; (\overline{\varphi_{_A}}_{2i}^{*}(y)\otimes \overline{\varphi_{_B}}_{2j}^{*}(z))\smile (\overline{\varphi_{_A}}_{1}^{*}(1)\otimes \overline{\varphi_{_B}}_{2}^{*}(1)-k \overline{\varphi_{_A}}_{2}^{*}(x^{n-1})\otimes \overline{\varphi_{_B}}_{1}^{*}(t)) \notag \\
 =& \; (\overline{\varphi_{_A}}_{2i}^{*}(y)\smile \overline{\varphi_{_A}}_{1}^{*}(1))\otimes (\overline{\varphi_{_B}}_{2j}^{*}(z))\smile \overline{\varphi_{_B}}_{2}^{*}(1)) \notag \\
 &-k (\overline{\varphi_{_A}}_{2i}^{*}(y)\smile \overline{\varphi_{_A}}_{2}^{*}(x^{n-1}))\otimes (\overline{\varphi_{_B}}_{2j}^{*}(z))\smile \overline{\varphi_{_B}}_{1}^{*}(t)) \notag \\ 
 =& \; (\overline{\varphi_{_A}}_{2i+1}^{*}(y)\otimes \overline{\varphi_{_B}}_{2(j+1)}^{*}(z)) - k (\overline{\varphi_{_A}}_{2(i+1)}^{*}(yx^{n-1})\otimes \overline{\varphi_{_B}}_{2j+1}^{*}(zt)) \notag 
\end{align} 
Applying $\bar{\psi_{_A}^{*}}\otimes \bar{\psi_{_B}^{*}}$, we have 
$$
	(y\otimes z)(1\otimes 1 - k x^{n-1}\otimes t)
$$
Therefore, $HH^{2k+3}(A\otimes B;A\otimes B)$ is generated by $HH^{2k}(A\otimes B;A\otimes B)$ and $c\in HH^{3}(A\otimes B;A\otimes B)$. Now, consider $x$ to be the coset $[x\otimes 1]\in HH^{0}(A;A)\otimes HH^{0}(B;B)$, $t$ to be the coset $[1\otimes t] \in HH^{0}(A;A)\otimes HH^{0}(B;B)$, $a$ to be the coset $[1\otimes 1]\in HH^{2}(A;A)\otimes HH^{0}(B;B)$ and $b$ to be the coset $[1\otimes 1] \in HH^{0}(A;A)\otimes HH^{2}(B;B)$. Notice that $x,t,a$ and $b$ generate  $HH^{2*}(A\otimes B;A\otimes B)$, and satisfy the relations $x^n-1=0$, $t^m-1=0$, $nx^{n-1}a=0$ and  $mt^{m-1}b=0$. Now,
\begin{equation}
\resizebox{\textwidth}{!}{$
\begin{aligned}
 c^2 =& \; (\overline{\varphi_{_A}}_{1}^{*}(1)\otimes \overline{\varphi_{_B}}_{2}^{*}(1)-k \overline{\varphi_{_A}}_{2}^{*}(x^{n-1})\otimes \overline{\varphi_{_B}}_{1}^{*}(t))\smile (\overline{\varphi_{_A}}_{1}^{*}(1)\otimes \overline{\varphi_{_B}}_{2}^{*}(1)-k \overline{\varphi_{_A}}_{2}^{*}(x^{n-1})\otimes \overline{\varphi_{_B}}_{1}^{*}(t)) \notag \\
 c^2 =& \; \overline{\varphi_{_A}}_{1}^{*}(1)\smile \overline{\varphi_{_A}}_{1}^{*}(1)\otimes \overline{\varphi_{_B}}_{2}^{*}(1)\smile \overline{\varphi_{_B}}_{2}^{*}(1) + k \overline{\varphi_{_A}}_{2}^{*}(x^{n-1})\smile \overline{\varphi_{_A}}_{1}^{*}(1)\otimes \overline{\varphi_{_B}}_{1}^{*}(t)\smile \overline{\varphi_{_B}}_{2}^{*}(1) \notag \\
 &- k \overline{\varphi_{_A}}_{1}^{*}(1)\smile \overline{\varphi_{_A}}_{2}^{*}(x^{n-1})\otimes \overline{\varphi_{_B}}_{2}^{*}(1)\smile \overline{\varphi_{_B}}_{1}^{*}(t) + k^2 \overline{\varphi_{_A}}_{2}^{*}(x^{n-1})\smile \overline{\varphi_{_A}}_{2}^{*}(x^{n-1}) \otimes \overline{\varphi_{_B}}_{1}^{*}(t)\smile \overline{\varphi_{_B}}_{1}^{*}(t) \notag \\
 c^2 =& -\frac{(n-1)n}{2} x^{n-2}\otimes 1 - \frac{(m-1)mk^2}{2} x^{n-2}\otimes 1 \notag  \notag \\
 c^2=& -\frac{(n-1)n}{2} x^{n-2}ab^2 - \frac{(m-1)mk^2}{2} x^{n-2}a^2b \notag 
\end{aligned}$}
\end{equation}
Thus,
\begin{itemize}
\item If $n$ and $m$ are odd then $c^2=0$.
\item If $n$ is even and $m$ is odd then $k$ is even and $c^2=0$.
\item If $m$ is even then $n$ is even and 
\begin{align}
c^2&= -\frac{km}{2}((km-1)x^{n-2}ab^2+(m-1)kx^{n-2}a^2b) \notag \\
c^2&=\frac{km}{2}x^{n-2}ab(b+ka) \notag 
\end{align}
\item If $k$ is even then $c^2=0$.
\item If $k$ is odd then $c^2=\frac{m}{2}x^{n-2}ab(b+ka)$.
\end{itemize}
Also, notice that $mc=0$. To sum up, as algebras 
$$
	HH^{*}(A\otimes B;A\otimes B)\cong \frac{\Z[x,t,a,b,c]}{(x^n-1, t^m-1, na, mb, mc, c^2)}  
$$	
in all cases except when $m$ is even and $k$ is odd, in which case we get
$$
	HH^{*}(A\otimes B;A\otimes B)\cong \frac{\Z[x,t,a,b,c]}{(x^n-1, t^m-1, na, mb, mc, c^2-\frac{m}{2}x^{n-2}ab(b+ka))}  
$$	

It only remains to calculate the BV-operator. Using theorem \ref{BVinj}, the BV-operator $\Delta^{A\otimes B}$ can be calculated using $\Delta^A$ and $\Delta^B$. Using the equations calculated before for $\Delta^A$ and $\Delta^B$ on the cochain level (\ref{BVZncharp}), we have 
\begin{align*}
\Delta(c)=&\Delta ( \overline{\varphi_{_A}}_{1}^{*}(1)\otimes \overline{\varphi_{_B}}_{2}^{*}(1)- k \overline{\varphi_{_A}}_{2}^{*}(x^{n-1})\otimes \overline{\varphi_{_B}}_{1}^{*}(t) ) \notag \\
=& \Delta^A (\overline{\varphi_{_A}}_{1}^{*}(1))\otimes\overline{\varphi_{_B}}_{2}^{*}(1) - \overline{\varphi_{_A}}_{1}^{*}(1)\otimes \Delta^B(\overline{\varphi_{_B}}_{2}^{*}(1)) \notag \\ 
&- k \Delta^A(\overline{\varphi_{_A}}_{2}^{*}(x^{n-1}))\otimes \overline{\varphi_{_B}}_{1}^{*}(t) - k \overline{\varphi_{_A}}_{2}^{*}(x^{n-1})\otimes \Delta^B(\overline{\varphi_{_B}}_{1}^{*}(t)) \notag \\
=& -\overline{\varphi_{_A}}_{0}^{*}(x^{n-1})\otimes \overline{\varphi_{_B}}_{2}^{*}(1) \notag \\
\Aboxed{&\Delta(c)=-x^{n-1}b} \notag 
\end{align*}
\begin{align*}
\Delta(xc)=&\Delta ( \overline{\varphi_{_A}}_{0}^{*}(x)\smile \overline{\varphi_{_A}}_{1}^{*}(1)\otimes \overline{\varphi_{_B}}_{0}^{*}(1)\smile \overline{\varphi_{_B}}_{2}^{*}(1) \notag \\
&- k \overline{\varphi_{_A}}_{0}^{*}(x)\smile \overline{\varphi_{_A}}_{2}^{*}(x^{n-1})\otimes \overline{\varphi_{_B}}_{0}^{*}(1)\smile \overline{\varphi_{_B}}_{1}^{*}(t) ) \notag \\
=& \Delta^A (\overline{\varphi_{_A}}_{1}^{*}(x))\otimes\overline{\varphi_{_B}}_{2}^{*}(1) - \overline{\varphi_{_A}}_{1}^{*}(x)\otimes \Delta^B(\overline{\varphi_{_B}}_{2}^{*}(1)) \notag \\ 
&- k \Delta^A(\overline{\varphi_{_A}}_{2}^{*}(1))\otimes \overline{\varphi_{_B}}_{1}^{*}(t) - k \overline{\varphi_{_A}}_{2}^{*}(1)\otimes \Delta^B(\overline{\varphi_{_B}}_{1}^{*}(t)) \notag \\
=& 0 \notag \\
\Aboxed{&\Delta(xc)=0} \notag 
\end{align*}
\begin{align*}
\Delta(tc)=&\Delta ( \overline{\varphi_{_A}}_{0}^{*}(1)\smile \overline{\varphi_{_A}}_{1}^{*}(1)\otimes \overline{\varphi_{_B}}_{0}^{*}(t)\smile \overline{\varphi_{_B}}_{2}^{*}(1) \notag \\
&- k \overline{\varphi_{_A}}_{0}^{*}(1)\smile \overline{\varphi_{_A}}_{2}^{*}(x^{n-1})\otimes \overline{\varphi_{_B}}_{0}^{*}(t)\smile \overline{\varphi_{_B}}_{1}^{*}(t) ) \notag \\
=& \Delta^A (\overline{\varphi_{_A}}_{1}^{*}(1))\otimes\overline{\varphi_{_B}}_{2}^{*}(t) - \overline{\varphi_{_A}}_{1}^{*}(1)\otimes \Delta^B(\overline{\varphi_{_B}}_{2}^{*}(t)) \notag \\ 
&- k \Delta^A(\overline{\varphi_{_A}}_{2}^{*}(x^{n-1}))\otimes \overline{\varphi_{_B}}_{1}^{*}(t^2) - k \overline{\varphi_{_A}}_{2}^{*}(x^{n-1})\otimes \Delta^B(\overline{\varphi_{_B}}_{1}^{*}(t^2)) \notag \\
=& - \overline{\varphi_{_A}}_{0}^{*}(x^{n-1})\otimes \overline{\varphi_{_B}}_{2}^{*}(t) - k \overline{\varphi_{_A}}_{2}^{*}(x^{n-1})\otimes \overline{\varphi_{_B}}_{0}^{*}(t) \notag \\
\Aboxed{&\Delta(tc)=-x^{n-1}t(b+ka)} \notag 
\end{align*}
\begin{align*}
\Delta(ac)=&\Delta ( \overline{\varphi_{_A}}_{2}^{*}(1)\smile \overline{\varphi_{_A}}_{1}^{*}(1)\otimes \overline{\varphi_{_B}}_{0}^{*}(1)\smile \overline{\varphi_{_B}}_{2}^{*}(1) \notag \\
&- k \overline{\varphi_{_A}}_{2}^{*}(1)\smile \overline{\varphi_{_A}}_{2}^{*}(x^{n-1})\otimes \overline{\varphi_{_B}}_{0}^{*}(1)\smile \overline{\varphi_{_B}}_{1}^{*}(t) ) \notag \\
=& \Delta^A (\overline{\varphi_{_A}}_{3}^{*}(1))\otimes\overline{\varphi_{_B}}_{2}^{*}(1) - \overline{\varphi_{_A}}_{3}^{*}(1)\otimes \Delta^B(\overline{\varphi_{_B}}_{2}^{*}(1)) \notag \\ 
&- k \Delta^A(\overline{\varphi_{_A}}_{4}^{*}(x^{n-1}))\otimes \overline{\varphi_{_B}}_{1}^{*}(t) - k \overline{\varphi_{_A}}_{4}^{*}(x^{n-1})\otimes \Delta^B(\overline{\varphi_{_B}}_{1}^{*}(t)) \notag \\
=& -(2n+1)\overline{\varphi_{_A}}_{2}^{*}(x^{n-1}) \otimes \overline{\varphi_{_B}}_{2}^{*}(1) \notag \\
\Aboxed{&\Delta(ac)=-x^{n-1}ab} \notag 
\end{align*}
\begin{align*}
\Delta(bc)=&\Delta ( \overline{\varphi_{_A}}_{0}^{*}(1)\smile \overline{\varphi_{_A}}_{1}^{*}(1)\otimes \overline{\varphi_{_B}}_{2}^{*}(1)\smile \overline{\varphi_{_B}}_{2}^{*}(1) \notag \\
&- k \overline{\varphi_{_A}}_{0}^{*}(1)\smile \overline{\varphi_{_A}}_{2}^{*}(x^{n-1})\otimes \overline{\varphi_{_B}}_{2}^{*}(1)\smile \overline{\varphi_{_B}}_{1}^{*}(t) ) \notag \\
=& \Delta^A (\overline{\varphi_{_A}}_{1}^{*}(1))\otimes\overline{\varphi_{_B}}_{4}^{*}(1) - \overline{\varphi_{_A}}_{1}^{*}(1)\otimes \Delta^B(\overline{\varphi_{_B}}_{4}^{*}(1)) \notag \\ 
&- k \Delta^A(\overline{\varphi_{_A}}_{2}^{*}(x^{n-1}))\otimes \overline{\varphi_{_B}}_{3}^{*}(t) - k \overline{\varphi_{_A}}_{2}^{*}(x^{n-1})\otimes \Delta^B(\overline{\varphi_{_B}}_{3}^{*}(t)) \notag \\
=& -\overline{\varphi_{_A}}_{0}^{*}(x^{n-1}) \otimes \overline{\varphi_{_B}}_{4}^{*}(1) + 2km \overline{\varphi_{_A}}_{2}^{*}(x^{n-1}) \otimes \overline{\varphi_{_B}}_{2}^{*}(1) \notag \\
\Aboxed{&\Delta(bc)=-x^{n-1}b^2} \notag 
\end{align*}
Using equation \ref{Dabc} and induction on powers of $x,t,a,b$ and $c$, we have
$$
\Delta(x^{i}t^{j}a^{l}b^{r}c^{s}) = sx^{i-1}t^{j}a^{l}b^{r}((i-1)b-jka)
$$
\end{proof}


\section{BV-Algebra Structure on \texorpdfstring{\boldmath{$HH^{*}(R[\Z^{k}])$}}{HH*(R[Zk])}}

From now on, we assume that $A$ is $R[\Z]\cong R [t,t^{-1}]$ with $R$ a commutative ring. 

\begin{pro}
The following is a $A^{e}$-projective resolution of $A$
\begin{equation}\label{Pres}
	\PP(A): 0 \rightarrow A\otimes A \xrightarrow{d_1} A\otimes A \xrightarrow{\mu} A \rightarrow 0
\end{equation}
with $\mu (a\otimes b)=ab$ and $d_{1}(a\otimes b)= (a\otimes b)(1\otimes t - t \otimes 1)$.
\end{pro}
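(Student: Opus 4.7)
The plan is to recognize this as the Koszul resolution of the commutative algebra $A = R[t, t^{-1}]$ as a bimodule over itself, associated to the generator $t$, and verify the three properties required of a projective resolution: projectivity of the terms, $A^e$-linearity of the maps, and exactness.

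First I would note that, since $A$ is commutative, the natural identification $A \otimes A \cong A^{e}$ makes each term of $\PP(A)$ a free $A^e$-module of rank one, hence $A^e$-projective. The map $\mu$ is the algebra multiplication of $A$, which is $A^e$-linear because $A$ is commutative; and $d_1$ is right multiplication in $A^e$ by the element $1\otimes t - t\otimes 1$, hence $A^e$-linear. The composition vanishes because
$$
\mu \circ d_1(a\otimes b)= \mu(a\otimes bt - at\otimes b)= abt - atb =0,
$$
using commutativity of $A$.

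For exactness, the cleanest approach is to identify $A\otimes A$ with the Laurent polynomial ring $R[t_1^{\pm 1}, t_2^{\pm 1}]$, where $t_1 := t\otimes 1$ and $t_2 := 1\otimes t$. Under this identification, $\mu$ becomes the quotient map by the ideal generated by $t_2 - t_1$ (its kernel is exactly this ideal, since $R[t_1^{\pm 1}, t_2^{\pm 1}]/(t_2 - t_1) \cong R[t^{\pm 1}] = A$), and $d_1$ becomes multiplication by $t_2 - t_1$. Surjectivity of $\mu$ is immediate from $\mu(t^n\otimes 1) = t^n$. Injectivity of $d_1$ reduces to showing $t_2 - t_1$ is a non-zero-divisor in $R[t_1^{\pm 1}, t_2^{\pm 1}]$, and exactness at the middle is then the equality $\operatorname{im}(d_1) = (t_2 - t_1) = \ker(\mu)$.

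The main (modest) obstacle is proving that $t_2 - t_1$ is a non-zero-divisor over an arbitrary commutative coefficient ring $R$. I would do this by viewing $R[t_1^{\pm 1}, t_2^{\pm 1}]$ as the Laurent polynomial ring $R[t_1^{\pm 1}][t_2, t_2^{-1}]$ in the variable $t_2$; then $t_2 - t_1$ is a degree-one polynomial with unit leading coefficient, so multiplication by it is injective (alternatively, one can exhibit an explicit contracting homotopy $s_0(t^j)=1\otimes t^j$, together with a telescoping formula for $s_1(t^i\otimes t^j)$ with separate cases for $i>0$, $i=0$, $i<0$, and verify $d_1 s_1 + s_0\mu = \mathrm{id}$ by a bookkeeping computation). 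Either route establishes exactness and completes the proof.
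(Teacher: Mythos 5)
Your proof is correct, but it takes a genuinely different route from the paper. The paper proves acyclicity by exhibiting an explicit contracting homotopy of right $A$-modules, namely $\tilde s_0(a)=1\otimes a$ and the piecewise telescoping formula for $\tilde s_1(t^i\otimes 1)$ (with separate cases $i\ge 1$, $i=0$, $i\le -1$), and then checking $\mu \tilde s_0=\mathrm{id}$ and $d_1\tilde s_1+\tilde s_0\mu=\mathrm{id}$ --- exactly the ``alternative'' you mention only in passing. Your main argument instead identifies $A\otimes A$ with $R[t_1^{\pm 1},t_2^{\pm 1}]$ and runs a Koszul-type argument: $d_1$ is multiplication by the regular element $t_2-t_1$ and $\ker\mu$ is the ideal $(t_2-t_1)$. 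This is conceptually cleaner and makes the generalization to $R[\Z^n]$ transparent (the full Koszul complex on $t_i\otimes 1-1\otimes t_i$), but it asks you to justify two small commutative-algebra facts that you state parenthetically: that $t_2-t_1$ is a non-zero-divisor (your unit-leading-coefficient argument over $R[t_1^{\pm1}][t_2]$, passed to the localization, does this) and that $\ker\mu=(t_2-t_1)$ over an arbitrary $R$ (division by $t_2-t_1$ with remainder in $R[t_1^{\pm1}]$ does this; it deserves a line rather than a parenthesis). What the paper's homotopy buys, and your main route does not, is the explicit formula for $\tilde s_1$, which is recycled essentially verbatim later as the comparison map $\varphi_1:\BB(A)\to\PP(A)$ used in all the subsequent BV computations; so the explicit homotopy is not just a verification device but working data for the rest of the section.
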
 

\begin{proof}
From the definition, it follows that $\mu d_{1}=0$. Now, we are going to define the following $A$-right maps  
\[
	 0 \leftarrow A\otimes A \xleftarrow{s_1} A\otimes A \xleftarrow{s_0} A \leftarrow 0
\]
\begin{align}
	s_0 (a)&=1\otimes a \notag \\
	s_1 (t^{i}\otimes 1)&= \begin{cases}
									-\displaystyle\sum_{j=0} ^{i-1} t^j \otimes t^{i-j-1} &\text{if $i\geq 1$} \notag\\
									0 &\text{if $i=0$} \notag \\
									\displaystyle\sum_{j=0} ^{-i-1} t^{-j-1} \otimes t^{i+j} &\text{if $i\leq -1$} \notag
								\end{cases} \notag 
\end{align}
By direct calculations, it follows that $\mu s_0= id$ and $d_{k+1}s_{k+1} + s_{k}d_{k} = id$ for all $k\geq 0$. Therefore, the complex is acyclic. 
\end{proof}

Tensoring this resolution by $A$ as $A^{e}$-modules, we obtain the complex
\begin{equation}\label{hoZ}
0 \rightarrow A \xrightarrow{0} A
\end{equation}
Taking $Hom_{A^{e}}(-, A)$ of $\PP(A)$, we obtain the complex
\begin{equation}\label{cohZ}
A \xrightarrow{0} A \rightarrow 0
\end{equation}

Then
$$
	HH_i(A;A) = HH^i(A;A) = \begin{cases}
					A &\text{if $i=0,1$} \notag\\
					0 &\text{otherwise} \notag 
			  \end{cases} \notag
$$

To calculate the cup product, we define $\Delta_{\PP(A)}: \PP(A) \longrightarrow \PP(A)\underset{A}\otimes \PP(A)$ as follows
\begin{align}\label{diagA}
	\Delta_{\PP(A)_{0}}: A^{2} &\longrightarrow A^{2}\underset{A}\otimes A^{2} \notag \\
	a\otimes b &\longmapsto a\otimes 1\underset{A}\otimes 1\otimes b \\
	\Delta_{\PP(A)_{1}}: A^{2} &\longrightarrow A^{2}\otimes_{A} A^{2} \oplus A^{2}\underset{A}\otimes A^{2} \notag \\
	a\otimes b &\longmapsto (a\otimes 1\underset{A}\otimes 1\otimes b, a\otimes 1\underset{A}\otimes 1\otimes b) \notag
\end{align}
By direct computations, it follows that $\Delta_{\PP(A)}$ is a diagonal approximation map.

\begin{pro}\label{isoalgZ}
As algebras,
$$
	HH^{*}(R[\Z];R[\Z])\cong R[x,x^{-1}]\otimes \Lambda(y)
$$
where $x,x^{-1}\in HH^{0}(A;A)$ and $y\in HH^{1}(A;A)$.
\end{pro}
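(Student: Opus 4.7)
The plan is to compute the algebra structure explicitly using the short 2-term resolution $\PP(A)$ from \eqref{Pres} and the diagonal approximation $\Delta_{\PP(A)}$ from \eqref{diagA}. First I would identify the generators. Using the identification $Hom_{A^e}(A^2,A)\cong A$ sending $f\mapsto f(1\otimes 1)$, the cohomology computation in \eqref{cohZ} already shows that $HH^0(A;A)\cong A$ and $HH^1(A;A)\cong A$, with all higher groups vanishing. I take $x,x^{-1}$ to be the classes corresponding to $t,t^{-1}\in A$ in degree $0$, and $y$ to be the class corresponding to $1\in A$ in degree $1$.

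Next I would compute the cup products using the diagonal approximation $\Delta_{\PP(A)}$. For two cochains $f,g\in Hom_{A^e}(A^2,A)$ of degree $0$, the formula
\[
(f\smile g)(1\otimes 1) = (f\underset{A}\otimes g)\Delta_{\PP(A)_0}(1\otimes 1) = (f\underset{A}\otimes g)(1\otimes 1\underset{A}\otimes 1\otimes 1) = f(1\otimes 1)\,g(1\otimes 1)
\]
shows that the cup product on $HH^0$ is induced by multiplication in $A$, giving the Laurent polynomial ring $R[x,x^{-1}]$. For $f$ of degree $0$ and $g$ of degree $1$, the same computation using the relevant component of $\Delta_{\PP(A)_1}$ yields the identity $(f\smile g)(1\otimes 1)=f(1\otimes 1)\,g(1\otimes 1)$, so $HH^{0}\otimes HH^{1}\to HH^{1}$ is again multiplication in $A$. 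Hence $HH^{1}(A;A)$ is a free $HH^{0}(A;A)$-module of rank $1$ generated by $y$.

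Finally, the cup product $y\smile y\in HH^{2}(A;A)=0$ vanishes trivially by dimension, so $y^2=0$, and the algebra structure is that of the exterior algebra on $y$ tensored over $R[x,x^{\pm 1}]$. Combining these facts yields the graded algebra isomorphism
\[
HH^{*}(R[\Z];R[\Z]) \cong R[x,x^{-1}]\otimes \Lambda(y),
\]
as desired.

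There is no serious obstacle here; the entire argument reduces to (a) verifying that $\Delta_{\PP(A)}$ is a diagonal approximation (a direct check on the two-term resolution already indicated after \eqref{diagA}), and (b) reading off the cup product formulas under the identifications. The only point deserving a bit of care is making sure that the signs $(-1)^{|\alpha||\beta|}$ appearing in the general definition of the cup product via $\Delta$ do not interfere, but for $|\alpha|,|\beta|\in\{0,1\}$ with $|\alpha|+|\beta|\leq 1$ every relevant sign is $+1$, and the degree-$2$ product vanishes for dimensional reasons.
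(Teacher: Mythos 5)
Your proof is correct and follows essentially the same route as the paper: the paper's proof simply states that, using the diagonal approximation $\Delta_{\PP(A)}$ from (\ref{diagA}), the cup product is multiplication in degrees $0$ and $1$ and vanishes above degree $1$, which is exactly the computation you carry out explicitly. The only difference is that you spell out the details (including the vanishing of the sign $(-1)^{|\alpha||\beta|}$ and the degree argument for $y^{2}=0$) that the paper leaves to the reader.
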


\begin{proof}
Using the diagonal approximation map (\ref{diagA}), it can be checked that the cup product is given by multiplication in degrees $0$ and $1$, and $0$ in degrees greater than $2$. Therefore, taking $x,x^{-1}\in HH^{0}(A;A)$ to be $t,t^{-1}\in A$ and $y\in HH^{1}(A;A)$ to be $1\in A$, we get the desire isomorphism of algebras.
\end{proof}

From the definition of the action \ref{actionHH} and the diagonal map \ref{diagA} follows that

\begin{lem}
The action of $HH^*(A;A)$ on $HH_1(A;A)$ is given by 
\begin{align}
\rho: HH_{1}(A;A)\otimes HH^*(A;A)&\longrightarrow HH_{1-*}(A;A) \notag \\
		a\otimes b&\longmapsto (-1)^{|b|} ab \notag 
\end{align}
\end{lem}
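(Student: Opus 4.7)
The plan is to apply Proposition \ref{actionHH} directly using the short projective resolution $\PP(A)$ from (\ref{Pres}) together with the explicit diagonal approximation map $\Delta_{\PP(A)}$ displayed in (\ref{diagA}). Because $HH^{i}(A;A)=0$ for $i\geq 2$, I only need to verify the formula in two cases: the action of a class $b\in HH^{0}(A;A)$ and of a class $b\in HH^{1}(A;A)$. In each case the sign in $(-1)^{nm}$ from Proposition \ref{actionHH} collapses to $(-1)^{|b|}$ since $n=1$.

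First I would fix the identifications. Using $A^{2}\underset{A^{e}}\otimes A\cong A$ via $(u\otimes v)\otimes_{A^{e}}c\mapsto vcu$, a homology class $a\in HH_{1}(A;A)\cong A$ is represented by $(1\otimes 1)\otimes_{A^{e}}a\in P_{1}\underset{A^{e}}\otimes A$; and using $\mathrm{Hom}_{A^{e}}(A^{2},A)\cong A$ via $f\mapsto f(1\otimes 1)$, a cocycle $b\in HH^{m}(A;A)\cong A$ corresponds to the $A^{e}$-linear map $\bar{b}\colon P_{m}\to A$, $u\otimes v\mapsto ubv$.

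Next I would push the cycle through the diagonal. From (\ref{diagA}),
\[
\Delta_{\PP(A)_{1}}(1\otimes 1)=\bigl((1\otimes 1)\underset{A}\otimes(1\otimes 1),\,(1\otimes 1)\underset{A}\otimes(1\otimes 1)\bigr)\in (P_{0}\underset{A}\otimes P_{1})\oplus(P_{1}\underset{A}\otimes P_{0}).
\]
For $b\in HH^{0}$, the operator $\bar{b}\underset{A}\otimes\mathrm{id}$ acts on the first summand and, after collapsing $A\underset{A}\otimes P_{1}\cong P_{1}$ via the left $A$-action, yields $b\otimes 1\in P_{1}$; tensoring over $A^{e}$ with $a$ gives $ba\in HH_{0}(A;A)\cong A$, matching $(-1)^{0}ab$. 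For $b\in HH^{1}$, the operator $\bar{b}\underset{A}\otimes\mathrm{id}$ acts on the second summand, giving $1\otimes 1\cdot b=b\otimes 1\in P_{0}$ (after the analogous collapse), which under $\otimes_{A^{e}}a$ returns $ba\in HH_{0}(A;A)\cong A$; together with the sign $(-1)^{1\cdot 1}=-1$, this gives $-ab=(-1)^{|b|}ab$.

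The step that requires the most care is not the calculation itself but the bookkeeping of the tensor identifications $P_{i}\underset{A}\otimes P_{j}\cong A^{\otimes(i+j+1)}$ and the compatibility between the left/right $A$-module structures used on either side of the internal $\underset{A}\otimes$; once those are pinned down, the two cases reduce to transparent manipulations of $1\otimes 1$ in $A\otimes A$, and commutativity of $A=R[t,t^{\pm 1}]$ guarantees that the result does not depend on the order of multiplication so the formula $\rho(a\otimes b)=(-1)^{|b|}ab$ holds on representatives, hence in (co)homology.
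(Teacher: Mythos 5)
Your proposal is correct and is exactly the argument the paper intends: the paper offers no written proof of this lemma, merely asserting that it ``follows from the definition of the action (Proposition \ref{actionHH}) and the diagonal map (\ref{diagA})'', and your two-case computation with the sign $(-1)^{nm}=(-1)^{|b|}$ for $n=1$ is the straightforward filling-in of that assertion.
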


Let $\psi: \PP(A) \rightarrow \BB(A)$ and $\varphi: \BB(A) \rightarrow \PP(A)$ be the chain maps defined as follows
\[
\xymatrix{ \cdots \ar[r] & 0 \ar[r]^{0} \ar@<-0.5ex>[d]_{0} & A^{2} \ar[r]^{d_1} \ar@<-0.5ex>[d]_{\psi_1} & A^{2} \ar[r]^(0.56){\mu} \ar@<-0.5ex>[d]_{\psi_0} & A \ar[r] \ar@{=}[d] & 0\\
 \cdots \ar[r] & A\otimes \bar{A}^{2}\otimes A \ar[r]^(0.53){\partial_{2}} \ar@<-0.5ex>[u]_{0} & A\otimes \bar{A}\otimes A \ar[r]^(0.64){\partial_{1}} \ar@<-0.5ex>[u]_{\varphi_1} &  A^{2} \ar[r]^(0.56){\partial_{0}} \ar@<-0.5ex>[u]_{\varphi_0} & A \ar[r]& 0 }
\]

\begin{align}
	\psi_0 &\equiv id & \varphi_0 &\equiv id \notag \\
	\psi_{1}(1\otimes 1) &= -1\otimes t\otimes 1 & \varphi_{1}(1\otimes t^{k} \otimes 1)&=\begin{cases}
									-\displaystyle\sum_{j=0} ^{k-1} t^j \otimes t^{k-j-1} &\text{if $k\geq 1$} \notag\\
									 0 &\text{if $k=0$} \notag \\
									\displaystyle\sum_{j=0} ^{-k-1} t^{-j-1} \otimes t^{k+j} &\text{if $k\leq -1$} \notag
								\end{cases} \notag \notag
\end{align} 


\begin{pro}\label{iden}
Using the identifications $A^{n+2} \underset{A^e}\otimes A\cong A^{n}\otimes A$ and $A^{2}\underset{A^e}\otimes A\cong A$ \\
the induced maps for $\psi_*$ and $\varphi_*$ are
\begin{align}
	\bar{\psi}_0 &\equiv id & \bar{\varphi}_0 &\equiv id \notag \\
	\bar{\psi}_1:A &\rightarrow A\otimes A & \bar{\varphi}_1:A\otimes A &\rightarrow A \notag \\
	a &\mapsto -a\otimes t & a\otimes t^k &\mapsto -kat^{k-1} \notag \
\end{align} 
Using the identifications $Hom_{A^{e}} (A^{n+2}, A)\cong  Hom (A^{n}, A)$ and $Hom_{A^{e}}(A^{2} , A)\cong A$ \\
the induced maps for $\psi_*$ and $\varphi_*$ are
\begin{align}
	\bar{\psi}^*_0 &\equiv id & \bar{\varphi}^*_0 &\equiv id \notag \\
	\bar{\psi}^*_1:Hom(A,A) &\rightarrow A & \bar{\varphi}^*_1:A &\rightarrow Hom(A,A) \notag \\
	f &\mapsto -f(t) & a &\mapsto f_a: A\rightarrow A \notag \\
	& & & \qquad \quad \; t^k\mapsto -kat^{k-1} \notag 
\end{align}
\end{pro}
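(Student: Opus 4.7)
The plan is to compute each of the four induced maps by direct substitution, since $\PP(A)$ has only two non-zero modules so only the degree-$0$ and degree-$1$ pieces appear; the degree-$0$ pieces are identities by construction, and all the work is in degree $1$.

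For the homological maps I will apply $\psi_1\underset{A^e}\otimes \mathrm{id}_A$ and $\varphi_1\underset{A^e}\otimes \mathrm{id}_A$ to $A^e$-generators and collapse via the canonical identifications
$$A^2\underset{A^e}\otimes A\cong A,\qquad A\otimes A\otimes A\underset{A^e}\otimes A\cong A\otimes A,$$
where the second sends $(a_0\otimes x\otimes a_1)\underset{A^e}\otimes a\mapsto a_0 a a_1\otimes x$ (coefficient factor placed first, bar factor second). Using $\psi_1(1\otimes 1)=-1\otimes t\otimes 1$ this immediately yields $\bar{\psi}_1(a)=-a\otimes t$. For $\bar{\varphi}_1(a\otimes t^k)$ I will substitute the defining formula for $\varphi_1(1\otimes t^k\otimes 1)$ and observe that, for $k\geq 1$, the sum $-\sum_{j=0}^{k-1}t^j\otimes t^{k-j-1}$ tensored with $a$ collapses summand-by-summand to $-t^j a t^{k-j-1}=-at^{k-1}$ by commutativity of $A$, giving the telescoped value $-kat^{k-1}$; the $k\leq -1$ branch is entirely analogous (the sign of $\varphi_1$ flips but so does the length of the sum), and $k=0$ is trivial.

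The cohomological maps are handled by the same idea, dualized. A cochain $f\in \mathrm{Hom}(A,A)$ lifts to the $A^e$-equivariant $\tilde{f}(a_0\otimes x\otimes a_1)=a_0 f(x)a_1$, and evaluating $\psi_1^*(\tilde{f})$ on the $A^e$-generator $1\otimes 1\in A^2$ gives $\tilde{f}(-1\otimes t\otimes 1)=-f(t)$. Analogously, a cochain represented by $a\in A$ corresponds to the $A^e$-map $\tilde{a}: A^2\to A$ with $\tilde{a}(1\otimes 1)=a$, and precomposition with $\varphi_1$ evaluated on $1\otimes t^k\otimes 1$ yields $-\sum_{j=0}^{k-1}t^j a t^{k-j-1}=-kat^{k-1}$, i.e.\ $\bar{\varphi}_1^*(a)$ sends $t^k\mapsto -kat^{k-1}$.

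There is no substantive obstacle here: every claim reduces to a one-line substitution once the identifications are fixed. The only point requiring any care is the ordering convention in $A\otimes A$ (coefficient first, bar second), which accounts for the visible placement of $a$ relative to $t$ in the displayed formulas; with the opposite convention one would obtain $-t\otimes a$ instead, so consistency with the dual computation $\bar{\psi}_1^*(f)=-f(t)$ fixes the choice.
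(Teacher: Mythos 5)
Your computation is correct and is exactly the direct verification the paper leaves implicit (Proposition \ref{iden} is stated without proof as an immediate consequence of the definitions of $\psi_*$, $\varphi_*$ and the standard identifications). In particular your telescoping of $-\sum_{j=0}^{k-1} t^{j}\, a\, t^{k-j-1}$ to $-kat^{k-1}$, the sign bookkeeping in the $k\leq -1$ branch, and the remark that commutativity of $A$ makes the ordering convention in $A\otimes A$ harmless all check out against the paper's conventions.
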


The BV-structure on Hochschild cohomology of the group ring of the integers is given by

\begin{teo}\label{BVKZ}
Let $a=ut^k$ with $u\in R^{\times}$ and $k\in\Z$. As a BV-algebra,
\begin{align}
	HH^{*}(R[\Z];R[\Z])&\cong R[x,x^{-1}]\otimes \Lambda(y) \notag \\
\Delta_a (x^{i}) &= 0  \notag \\ 
	\Delta_a (yx^{i}) &= (i+k)x^{i-1}  \notag
\end{align}
where $x,x^{-1}\in HH^{0}(A;A)$ and $y\in HH^{1}(A;A)$.
\end{teo}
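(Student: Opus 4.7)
The plan is to apply Theorem~\ref{HHBVact} with $a = ut^k \in HH_1(A;A)$ under the identification $HH_1(A;A)\cong A$ arising from complex~(\ref{hoZ}). Two hypotheses must be verified: that $\rho_a$ is an isomorphism and that $B(a)=0$. The second is automatic, since $B(a) \in HH_2(A;A) = 0$ by the Hochschild-homology computation already performed in this section. For the first, the preceding lemma describes $\rho_a$ as (up to a sign depending on degree) multiplication by $a$ between copies of $A$; since $a = ut^k$ is a unit in $A = R[t,t^{-1}]$, this is an automorphism, so $\rho_a$ is an isomorphism both as a map $HH^0(A;A) \to HH_1(A;A)$ and as a map $HH^1(A;A) \to HH_0(A;A)$.

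Theorem~\ref{HHBVact} then equips $HH^*(A;A)$ with the BV-structure $\Delta_a = \rho_a^{-1}\circ B\circ \rho_a$, and it remains to evaluate $\Delta_a$ on the generators. For $x^i \in HH^0(A;A)$, the operator $\Delta_a$ lands in $HH^{-1}(A;A) = 0$, giving the first line of the formula trivially. For $yx^i \in HH^1(A;A)$, one unwinds the three-step definition: first $\rho_a(yx^i) = -u\,t^{k+i} \in HH_0(A;A) = A$, the minus sign coming from $|yx^i|=1$ in the action formula; next one applies $B\colon HH_0(A;A) \to HH_1(A;A)$; and finally $\rho_a^{-1}$ sends $u\,t^{k+i-1} \in HH_1(A;A)$ to $x^{i-1} \in HH^0(A;A)$.

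The crux is the computation of $B$ on $HH_0$. One can either pass through the bar resolution using the explicit chain-equivalence maps $\bar{\psi}_*,\bar{\varphi}_*$ from Proposition~\ref{iden} together with the normalized Connes operator of Definition~\ref{BConnesHH}, or invoke the Hochschild--Kostant--Rosenberg identification $HH_*(A;A) \cong \Omega^*_{A/R}$ under which $B$ becomes the de Rham differential. Either way one obtains $B(t^m) = m\,t^{m-1}$. Composing the three steps gives
\[
\Delta_a(yx^i) = \rho_a^{-1}\bigl(B(-u\,t^{k+i})\bigr) = \rho_a^{-1}\bigl(-u(k+i)\,t^{k+i-1}\bigr) = (i+k)\,x^{i-1},
\]
as claimed. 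Since $\{x^i,\,yx^i : i\in\Z\}$ is an $R$-basis of $R[x,x^{-1}]\otimes\Lambda(y)$, these values determine $\Delta_a$ completely.

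The main obstacle is pure sign-bookkeeping: the normalized Connes operator, the transfer maps $\bar{\psi}_*,\bar{\varphi}_*$, and the factor $(-1)^{|b|}$ in the action $\rho$ each introduce signs that must combine to yield $(i+k)$ rather than $-(i+k)$. A useful sanity check is that the final formula is independent of $u$, consistent with the observation that rescaling $a$ by a unit in $R$ rescales $\rho_a$ and $\rho_a^{-1}$ reciprocally, so $\Delta_a = \rho_a^{-1}B\rho_a$ depends only on $k$.
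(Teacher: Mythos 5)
Your overall strategy is exactly the paper's: invoke Theorem~\ref{HHBVact} with $a=ut^k$, observe that $\rho_a$ is multiplication by a unit (hence an isomorphism) and that $B(a)\in HH_2(A;A)=0$ automatically, kill $\Delta_a(x^i)$ by degree, and compute $\Delta_a(yx^i)$ by unwinding $\rho_a^{-1}B\rho_a$ through the comparison maps $\bar{\psi}_*,\bar{\varphi}_*$. The only place where real work is required is the sign of $B$ on $HH_0$ under the chosen identification $HH_*(A;A)\cong A$, and this is precisely where your argument fails as written. Under the paper's identification, $B$ on $HH_0$ is the composite $\bar{\varphi}_1\circ B_0\circ\bar{\psi}_0$, and since $\bar{\varphi}_1(a\otimes t^m)=-mat^{m-1}$ carries a minus sign (Proposition~\ref{iden}), the transferred operator is $t^m\mapsto -mt^{m-1}$, not $+mt^{m-1}$. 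The Hochschild--Kostant--Rosenberg heuristic cannot settle this, because the sign depends on which generator of $\Omega^1_{A/R}$ the element $1\in A\cong HH_1(A;A)$ corresponds to under $\bar{\psi}_1$; here $\bar{\psi}_1(1)=-1\otimes t$, i.e.\ $-dt$, so the de Rham differential acquires a sign when read back through the small complex.

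Concretely, your displayed computation is internally inconsistent: in the preceding paragraph you state that $\rho_a^{-1}$ sends $u\,t^{k+i-1}$ to $x^{i-1}$ (so $\rho_a^{-1}\colon HH_1\to HH^0$ is multiplication by $u^{-1}t^{-k}$ with no sign), but the display then asserts $\rho_a^{-1}\bigl(-u(k+i)t^{k+i-1}\bigr)=(i+k)x^{i-1}$, whereas linearity forces the value $-(k+i)x^{i-1}$. With the correct value $B(t^m)=-mt^{m-1}$ everything does combine to give $+(i+k)x^{i-1}$, matching the paper's chain $t^i\mapsto -ut^{i+k}\mapsto -u\otimes t^{i+k}\mapsto u(i+k)t^{i+k-1}\mapsto (i+k)t^{i-1}$. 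So the theorem and your outline are fine, but the one step you yourself identify as ``the crux'' is asserted rather than verified, and the version you wrote down is off by a sign; you need to actually track the minus signs in $\bar{\psi}_1$ and $\bar{\varphi}_1$ rather than appeal to $B(t^m)=mt^{m-1}$.
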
  

\begin{proof}
Let $a\in HH_1(A;A)\cong R[\Z]$ and $\rho_a$ be the map defined as follows 
\begin{align}
\rho_a:HH^*(A;A)&\longrightarrow HH_{1-*}(A;A) \notag \\
		b&\longmapsto \rho(a\otimes b) \notag 
\end{align}
Since the action is given by multiplication, $\rho_a$ is an isomorphism for any unit $a\in R[\Z]$. Even more, any unit in $R[\Z]$ is of the form $a=ut^k$ with $u\in R^{\times}$ and $k\in\Z$. By theorem \ref{HHBVact}, $HH^*(A;A)$ is a BV-algebra and the BV-operator $\Delta_a$ is given by
\[
\xymatrix{
\Delta_a: HH^*(A;A) \ar[d]_{\rho_a} \ar[r] & HH^{*-1}(A;A) \\
HH_{1-*}(A;A) \ar[r]^-{B} & HH_{1-(*-1)}(A;A) \ar[u]_{\rho^{-1}_a}
}
\] 
By degree reasons $\Delta_a(x^i)=0$ and $\Delta_a(yx^i)$ is given by
$$
 t^i \xmapsto{\rho_a} -ut^{i+k} \xmapsto{\bar{\psi}^*_{0}} -ut^{i+k} \xmapsto{B} -u\otimes t^{i+k} \xmapsto{\bar{\varphi}_{1}} u(i+k)t^{i+k-1} \xmapsto{\rho^{-1}_a} (i+k)t^{i-1} 
$$ 

\end{proof}

In \cite{M3}, Menichi calculates the BV-algebra structure of the homology of the free loop space of $\Ss$

\begin{teo}[\cite{M3} Theorem 10]
As a BV-algebra,
\begin{align}
	\HH_{*}(L\Ss;R)&\cong R[x,x^{-1}]\otimes \Lambda(z) \notag \\
\Delta(x^{i}) &= 0  \notag \\ 
	\Delta(zx^{i}) &= ix^{i}  \notag
\end{align}
where $|x|=0$ and $|z|=-1$.
\end{teo}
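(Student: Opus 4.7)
The plan is to work geometrically, exploiting the simple structure of $L\Ss$. Since $\Ss$ is a $K(\Z,1)$, the free loop space decomposes as $L\Ss = \bigsqcup_{n\in\Z} L_n\Ss$ where $L_n\Ss$ is the component of loops of winding number $n$, and each component is homotopy equivalent to $\Ss$ via evaluation at a basepoint. This gives $H_*(L\Ss;R)\cong R[\Z]\otimes H_*(\Ss;R)$, and after the string-topology degree shift by $\dim \Ss = 1$ one obtains $\HH_*(L\Ss;R)\cong R[x,x^{-1}]\otimes\Lambda(z)$ with $x^n$ representing the fundamental class of $L_n\Ss$ and $zx^n$ a point class in $L_n\Ss$, so that $|x|=0$ and $|z|=-1$.

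For the algebra structure, I would compute the Chas-Sullivan loop product, defined by transverse intersection of basepoint evaluation maps followed by loop concatenation. Since concatenation sends loops of winding numbers $m$ and $n$ to a loop of winding number $m+n$, one obtains $x^m\cdot x^n=x^{m+n}$; the relation $z^2=0$ follows for degree reasons, recovering $R[x,x^{-1}]\otimes\Lambda(z)$ as a graded commutative algebra.

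For the BV-operator, I would use the defining formula $\Delta(\alpha) = \sigma_*([\Ss]\times\alpha)$, where $\sigma:\Ss\times L\Ss\to L\Ss$ is the rotation action. Taking the representative $\gamma_n(t)=e^{2\pi int}$ of the point class $zx^n$, one computes $(\theta\cdot\gamma_n)(t) = \gamma_n(t+\theta) = e^{2\pi in\theta}\gamma_n(t)$, so the orbit of $\gamma_n$ is the loop $\theta\mapsto e^{2\pi in\theta}\gamma_n$ in $L_n\Ss$. Under the basepoint-evaluation equivalence $L_n\Ss\simeq\Ss$ this orbit maps to the degree-$n$ loop $\theta\mapsto e^{2\pi in\theta}$ in $\Ss$, and therefore represents $n$ times the fundamental class of $L_n\Ss$. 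Hence $\Delta(zx^n) = nx^n$, and $\Delta(x^n)=0$ for degree reasons since $\HH_1(L\Ss)=H_2(L\Ss)=0$.

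The main obstacle would be reconciling the grading and sign conventions between the string-topology framework (where $\Delta$ has degree $+1$ and the loop product is shifted by $-\dim \Ss$) and the Hochschild picture used throughout the paper (where $\Delta$ has degree $-1$). An alternative route, bypassing string topology altogether, is via Burghelea's theorem $H_*(LB\Z;R)\cong HH_*(R[\Z];R[\Z])$, under which the $S^1$-rotation BV-operator corresponds to Connes' $B$-operator. This would reduce the theorem to a direct computation of $B$ on the 2-periodic resolution of $R[\Z]$ introduced for Theorem \ref{BVKZ}, yielding a purely algebraic proof that dovetails with the rest of the paper.
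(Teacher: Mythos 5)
Your argument is essentially correct, but be aware that the paper itself offers no proof of this statement: it is imported verbatim from Menichi [M3, Theorem 10] and used as a black box, so there is no internal argument to compare against. Your geometric computation is the standard (and, in substance, Menichi's own) proof: since $\Ss=K(\Z,1)$, the free loop space splits as $L\Ss=\bigsqcup_{n}L_n\Ss$ with each evaluation $\mathrm{ev}_0\colon L_n\Ss\to\Ss$ a homotopy equivalence (its fibre, a component of $\Omega\Ss$, is contractible); the Chas--Sullivan product adds winding numbers because the figure-eight space $L_m\Ss\times_{\Ss}L_n\Ss\simeq\Ss$ maps by a degree-one map to $L_{m+n}\Ss$ under concatenation and under $\mathrm{ev}_0$; and the orbit map $\theta\mapsto\gamma_n(\cdot+\theta)$ composed with $\mathrm{ev}_0$ has degree $n$, giving $\Delta(zx^n)=nx^n$, with all other cases vanishing for degree reasons as you say. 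Two caveats. The sign and grading reconciliation you flag is indeed the only delicate point, and in the paper it is handled not here but in the corollary that follows, where the explicit isomorphism $\phi(z)=yx$ absorbs the degree shift. More importantly, your proposed ``alternative route'' is weaker than you suggest: Burghelea's theorem together with the identification of the rotation operator with Connes' $B$ only recovers the underlying module with its $\Delta$, not the ring structure --- matching the Chas--Sullivan product with the Hochschild cup product requires a Cohen--Jones/Poincar\'e-duality comparison, which is exactly what the paper extracts \emph{from} this theorem in the subsequent corollary, so deriving the theorem that way would be circular in this context (and the resolution of $R[\Z]$ used in the paper is the two-term Koszul resolution, not a $2$-periodic one).
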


This BV-algebra and the BV-algebra of the Hochschild cohomology of the group ring of the integers are related by 

\begin{cor}
There is an isomorphism of BV-algebras
$$
	\phi: \HH_{*}(L\Ss;R)\xrightarrow{\cong} HH^{*}(R[\Z];R[\Z])
$$
\end{cor}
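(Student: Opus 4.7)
The plan is to exhibit an explicit algebra isomorphism $\phi$ between the two graded algebras and then pin down the fundamental class $a\in HH_{1}(R[\Z];R[\Z])$ whose induced BV-operator $\Delta_{a}$ is intertwined with the string topology operator $\Delta$ on $\HH_{*}(L\Ss;R)$.

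As a first step I would observe that both algebras are free graded-commutative of the shape $R[x^{\pm 1}]\otimes \Lambda(\text{one odd generator})$. Any $R$-linear map given by $\phi(x^{\pm 1})=x^{\pm 1}$ and $\phi(z)=yx^{\epsilon}$ for some $\epsilon\in\Z$ is therefore an isomorphism of graded algebras, and the only remaining freedom is the integer $\epsilon$ together with the unit $a=ut^{k}$ parametrizing the BV-structure on the cohomological side.

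Next I would match the two BV-operators on the single generator $z$ using theorem \ref{BVKZ}. A direct computation gives $\phi(\Delta(zx^{i}))=ix^{i}$, while $\Delta_{a}(\phi(zx^{i}))=\Delta_{a}(yx^{i+\epsilon})=(i+\epsilon+k)x^{i+\epsilon-1}$; requiring these to coincide for every $i\in\Z$ forces $\epsilon=1$ and $k=-1$. This singles out $\phi(x)=x$, $\phi(z)=yx$, and the fundamental class $a=t^{-1}$, under which $\Delta_{a}(yx^{j})=(j-1)x^{j-1}$.

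Extending the intertwining $\phi\Delta=\Delta_{a}\phi$ to the whole algebra is then essentially bookkeeping: both sides vanish on $x^{\pm 1}$ by degree, they have been arranged to agree on every class $zx^{i}$, and the universal three-term BV-identity used throughout section 4 determines $\Delta$ on all products once it is known on single generators and on pairs of generators. An induction on the number of $x$-factors, using $\phi(z)^{2}=y^{2}x^{2}=0$, then closes the argument. The only step that carries real content is the selection of $\epsilon$ and $a$: the naive guess $z\mapsto y$ with $a=1$ fails by a shift in the exponent of $x$, and the correct choice encodes, algebraically, how the Calabi-Yau fundamental class of $R[\Z]$ is paired against the natural $S^{1}$-action on $L\Ss$.
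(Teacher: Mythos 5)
Your proposal is correct and follows essentially the same route as the paper: it arrives at the same map $\phi(x)=x$, $\phi(z)=yx$ and the same choice of fundamental class $a=t^{-1}$, and verifies the intertwining $\phi\Delta=\Delta_{a}\phi$ by the same computation on the classes $zx^{i}$. The only difference is presentational --- you derive $\epsilon=1$ and $k=-1$ by solving the matching equations rather than positing the choice outright, which is a nice touch but not a different argument.
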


\begin{proof}
By theorem \ref{BVKZ}, $HH^{*}(R[\Z];R[\Z])$ can be endowed with many BV-algebra structures as units in $R[\Z]$. For the existence of this isomorphism, we are considering the BV-operator given by the unit $a=t^{-1}$. Then as a BV-algebra 
\begin{align}
	HH^{*}(R[\Z];R[\Z])&\cong R[x,x^{-1}]\otimes \Lambda(y) \notag \\
	\tilde{\Delta} (y^r x^{i}) &= r(i-1)x^{i-1}  \notag
\end{align} 
The isomorphism $\phi$ is defined as follows 
$$
	\phi(x)=x \qquad \text{and} \qquad \phi(z)=yx 
$$
It is clear that $\phi$ is an isomorphism of graded algebras, and
$$
\phi\Delta(z^r x^i) = \phi(rix^i) = rix^{i} =r(i+r-1)x^{i+r-1}= \tilde{\Delta}(y^rx^{i+r}) = \tilde{\Delta}\phi(z^r x^i) 
$$
\end{proof}


Since $HH^{*}(R[\Z];R[\Z])$ is $R$-projective and the resolution $\PP(A)$ (\ref{Pres}) satisfies the conditions of theorem \ref{injection}. By theorem \ref{BVisoCY}, we get 

\begin{teo}\label{BVfiniterank}
As BV-algebras,
\begin{align}
	HH^{*}(R[\Z^{n}]&;R[\Z^{n}])= R[x_1,x_1 ^{-1},\dots,x_n,x_n ^{-1}]\otimes \Lambda(y_1,\dots ,y_n) \notag \\
	\Delta(x_1 ^{i_1}\cdots x_n ^{i_n}y_1 ^{r_1}\cdots y_n ^{r_n}) &= \displaystyle \sum_{k=1} ^{n} (-1)^{^{r_1+\cdots +r_{k-1}}} r_k( i_k-1) x_1 ^{i_1}\cdots x_i^{i_k-1}\cdots x_n ^{i_n}y_1 ^{r_1}\cdots \widehat{y_k ^{r_k}}\cdots y_n ^{r_n}\notag
\end{align}
where $|x_i|=|x_i ^{-1}|=0$ and $|y_i|=1$ for $1\leq i\leq n$.
\end{teo}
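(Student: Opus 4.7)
The plan is to apply Theorem \ref{BVisoCY} inductively to the algebra decomposition $R[\Z^n] \cong R[\Z]^{\otimes n}$, so that the BV-structure is assembled from $n$ copies of the one-variable computation in Theorem \ref{BVKZ}. In the $k$-th tensor factor I would choose the unit class $a_k = t_k^{-1} \in HH_1(R[\Z];R[\Z])$, because Theorem \ref{BVKZ} then specialises to $\Delta_{a_k}(y x^i) = (i-1) x^{i-1}$, which is precisely the local form of $\Delta$ in the statement. The underlying graded algebra description then follows by iterating Corollary \ref{isoalgebras} applied to Proposition \ref{isoalgZ}.

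At each step of the induction, with $A = R[\Z]^{\otimes(k-1)}$ and $B = R[\Z]$ and unit classes $a_1 \otimes \cdots \otimes a_{k-1}$ and $a_k$, I would verify the three hypotheses of Theorem \ref{BVisoCY}. The resolution (\ref{Pres}) has length one with each term a copy of $R[\Z]^e$, so its tensor power resolves $R[\Z]^{\otimes(k-1)}$ over its enveloping algebra and the Hom--tensor identification (\ref{isoResAB}) is immediate on such elementary free modules. The cohomology $HH^*(R[\Z]) \cong R[x,x^{-1}]\otimes \Lambda(y)$ is $R$-free, so $R$-projectivity persists. The isomorphism statement for $\rho_{a_k}$ on a single factor is already in Theorem \ref{BVKZ}, and by Proposition \ref{tensoraction} the tensor class satisfies $\rho_{a_1 \otimes \cdots \otimes a_k} = \rho_{a_1} \otimes \cdots \otimes \rho_{a_k}$, so it remains a duality class at every stage. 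Finally $B(a_k) = 0$ is automatic because $HH_2(R[\Z]) = 0$ by (\ref{hoZ}), and Proposition \ref{TensorConnes} then propagates this via $B(a_1 \otimes \cdots \otimes a_{k-1}) = \sum_j \pm a_1 \otimes \cdots \otimes B(a_j) \otimes \cdots = 0$ to all tensor powers.

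Once the hypotheses are in place, iterating Theorem \ref{BVisoCY} yields an isomorphism of BV-algebras whose operator is
\[
\Delta \;=\; \sum_{k=1}^n \mathrm{id}^{\otimes(k-1)} \otimes \Delta_{a_k} \otimes \mathrm{id}^{\otimes(n-k)}.
\]
Writing each factor as $\alpha_k = x_k^{i_k} y_k^{r_k}$ of degree $r_k \in \{0,1\}$, the Koszul sign for a degree $-1$ operator on a graded tensor product produces the prefactor $(-1)^{r_1+\cdots+r_{k-1}}$ as $\Delta_{a_k}$ moves past the first $k-1$ entries, and Theorem \ref{BVKZ} gives $\Delta_{a_k}(x^{i_k} y^{r_k}) = r_k(i_k-1)\, x^{i_k-1} y^{r_k-1}$. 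Interpreting $\widehat{y_k^{r_k}}$ as $y_k^{r_k-1}$ when $r_k = 1$ (and noting that the term vanishes identically when $r_k = 0$) recovers the displayed formula.

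The main technical point to watch is the sign bookkeeping across the induction: the formula $\Delta^{A \otimes B} = \Delta^A \otimes \mathrm{id} + \mathrm{id} \otimes \Delta^B$ of Theorem \ref{BVisoCY} carries implicit Koszul signs coming from the graded tensor product, and these have to be handled consistently for the prefactor $(-1)^{r_1+\cdots+r_{k-1}}$ to emerge after $n-1$ iterations; the analogous signs also appear in the compatibility statement of Proposition \ref{TensorConnes} when checking $B(a_1\otimes\cdots\otimes a_{k-1}) = 0$ inductively. All other homological inputs --- finiteness of the resolution, vanishing of $HH_2(R[\Z])$, and the tensor-product formula for the action --- are already in place from the earlier sections.
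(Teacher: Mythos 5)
Your proposal is correct and follows essentially the same route as the paper, which likewise obtains this theorem by feeding the length-one resolution (\ref{Pres}), the $R$-projectivity of $HH^{*}(R[\Z];R[\Z])$, and the duality class of Theorem \ref{BVKZ} into Theorem \ref{BVisoCY}; your choice of unit $a_k=t_k^{-1}$ matches the paper's implicit normalization (the one used in its comparison with $\HH_{*}(L\Ss;R)$). The paper states this in a single sentence, so your write-up simply supplies the details it leaves implicit.
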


As a corollary, we have 

\begin{cor}
As  Gerstenhaber algebras,
$$
	HH^{*}(R[\Z^{n}];R[\Z^{n}])= R[x_1,x_1 ^{-1},\dots,x_n,x_n ^{-1}]\otimes \Lambda(y_1,\dots ,y_n)
$$
where $|x_i|=|x_i ^{-1}|=0$ and $|y_i|=1$ for $1\leq i\leq n$. The bracket is generated by
$$
	\lbrace x^{r}_i,x^{s}_j\rbrace =0, \qquad \lbrace y_i,y_j\rbrace=0, \qquad \text{and} \qquad \lbrace x^{r}_i,y_j\rbrace=-r\delta_{ij}x_i^{r-1}
$$ 
\end{cor}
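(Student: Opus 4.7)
The plan is to read off the Gerstenhaber bracket from the BV-structure established in Theorem \ref{BVfiniterank} using the defining identity
\[
\{a,b\}=-(-1)^{|a|}\bigl(\Delta(ab)-\Delta(a)b-(-1)^{|a|}a\Delta(b)\bigr)
\]
of Definition \ref{DefBValg}. Since the bracket is a graded biderivation of degree $-1$ by the Poisson identity, it is determined by its values on the generators $x_i^{\pm 1}$ and $y_j$, so it suffices to verify the three displayed formulas and then invoke the Poisson identity.

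The first case, $\{x_i^{r},x_j^{s}\}=0$, is immediate: both $\Delta(x_i^{r})$ and $\Delta(x_j^{s})$ vanish by the formula in Theorem \ref{BVfiniterank} (every $r_k$ is zero), and $\Delta(x_i^{r}x_j^{s})$ lies in $HH^{-1}(R[\Z^n];R[\Z^n])=0$ for degree reasons. For $\{y_i,y_j\}$, I would evaluate the $\Delta$-formula on the individual factors and on the product: substitution gives $\Delta(y_k)=-x_k^{-1}$ for every $k$, and for $i<j$ the two surviving summands (indexed by $k=i$ and $k=j$) contribute signs $(-1)^{0}$ and $(-1)^{1}$ respectively, yielding $\Delta(y_iy_j)=-x_i^{-1}y_j+x_j^{-1}y_i$. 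Plugging these into the defining identity and using that the even $x_k^{-1}$ graded-commute with $y_\ell$ shows that the three terms cancel; the case $i=j$ is either included via $y_i^{2}=0$ in the exterior algebra, or recovered from graded antisymmetry.

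For the mixed bracket $\{x_i^{r},y_j\}$, the identity simplifies to $-(\Delta(x_i^{r}y_j)-x_i^{r}\Delta(y_j))$ since $|x_i^{r}|=0$ and $\Delta(x_i^{r})=0$. The $\Delta$-formula splits into two subcases. When $i=j$, the unique nonzero summand gives $\Delta(x_i^{r}y_i)=(r-1)x_i^{r-1}$, while $x_i^{r}\Delta(y_i)=-x_i^{r-1}$, so the bracket equals $-\bigl((r-1)+1\bigr)x_i^{r-1}=-rx_i^{r-1}$. When $i\neq j$, the unique surviving summand gives $\Delta(x_i^{r}y_j)=-x_i^{r}x_j^{-1}$, which coincides with $x_i^{r}\Delta(y_j)$, so the bracket vanishes. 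Combining the two subcases yields $\{x_i^{r},y_j\}=-r\delta_{ij}x_i^{r-1}$.

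The main obstacle is purely bookkeeping: careful tracking of the sign $(-1)^{r_1+\cdots+r_{k-1}}$ in the formula of Theorem \ref{BVfiniterank}, and of the signs arising from graded commutation when an even $x_k^{\pm 1}$ is pulled past an odd $y_\ell$. Once the bracket has been verified on the generators, repeated application of the Poisson identity extends it to every monomial in $R[x_1^{\pm 1},\dots,x_n^{\pm 1}]\otimes\Lambda(y_1,\dots,y_n)$, and no cohomological input beyond Theorem \ref{BVfiniterank} is required.
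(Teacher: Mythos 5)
Your computation is correct and is exactly the derivation the paper intends: the corollary is stated without proof immediately after Theorem \ref{BVfiniterank}, the bracket being read off from $\Delta$ via the defining BV identity. Your evaluations $\Delta(y_k)=-x_k^{-1}$, $\Delta(y_iy_j)=-x_i^{-1}y_j+x_j^{-1}y_i$, and the two subcases of $\Delta(x_i^{r}y_j)$ all match the formula of the theorem, and the extension to arbitrary monomials by the Poisson identity and graded antisymmetry is the standard (and correct) closing step.
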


Let $G$ be a finitely generated abelian group. Then $G$ can be decomposed as $G \cong \Z^{n}\oplus H
$ with $H$ a finite abelian group. Therefore,
$$
	R[G] \cong R[\Z^n]\otimes R[H]
$$
By theorem \ref{AxB}, there is an isomorphism of Gerstenhaber algebras 
$$
	HH^*(R[G];R[G]) \cong HH^*(R[\Z^{n}];R[\Z^{n}]) \otimes HH^*(R[H]; R[H])
$$

\begin{cor}
Let $G$ be a finitely generated abelian group. Then as a BV-algebra 
$$
	HH^*(R[G];R[G]) \cong HH^*(R[\Z^{n}];R[\Z^{n}]) \otimes HH^*(R[H]; R[H])
$$
with BV-operator given by 
$$
	\Delta= \Delta^{\Z^n}\otimes id \pm id \otimes \Delta^{H}
$$
where $\Delta^{\Z^n}$ is given by theorem \ref{BVfiniterank} and $\Delta^H$ is the BV-operator for the finite group $H$.
\end{cor}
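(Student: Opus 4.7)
The plan is to apply Theorem \ref{BVisoCY} to the factors $A=R[\Z^n]$ and $B=R[H]$, using the decomposition $R[G]\cong R[\Z^n]\otimes R[H]$. Once the three hypotheses of that theorem are checked, both the BV-algebra isomorphism and the formula $\Delta=\Delta^{\Z^n}\otimes id \pm id\otimes \Delta^{H}$ drop out directly (the sign coming from the Koszul convention already visible in Proposition \ref{TensorConnes}).

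To verify the hypotheses: first, I would note that the resolution $\PP(R[\Z^n])$ obtained by iterating the construction from Section 6 (tensoring $n$ copies of $\PP(R[\Z])$ using the proposition on tensor products of resolutions) has finitely generated projective $(R[\Z^n])^e$-modules in every degree, and the Hom--K\"unneth identification (\ref{isoResAB}) holds because these modules are finitely generated. For $R[H]$, since $H$ is finite, $R[H]$ is a finitely generated projective $R$-algebra, so a suitable projective resolution of finite type is available. Second, by Theorem \ref{BVfiniterank}, $HH^*(R[\Z^n];R[\Z^n])$ is a free $R$-module, hence $R$-projective, so the middle hypothesis of Theorem \ref{BVisoCY} is satisfied.

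For the third hypothesis I would produce the required dualizing classes. For $A=R[\Z^n]$, take $a\in HH_n(A;A)$ to be the class whose cap product action $\rho_a$ realizes the Calabi-Yau duality $HH^*(A;A)\cong HH_{n-*}(A;A)$ underlying Theorem \ref{BVfiniterank}; by construction $B(a)=0$. For $B=R[H]$, exploit that $R[H]$ is symmetric (Example \ref{frob}), so the Frobenius form yields an $A^e$-isomorphism $R[H]\cong R[H]^\vee$, and hence a class $b\in HH_0(R[H];R[H])$ whose action is the duality isomorphism; since $b$ lives in degree zero, $B(b)=0$ automatically on the normalized Hochschild chain complex.

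The main obstacle will be to identify the BV-structure that Theorem \ref{BVisoCY} produces on the $R[H]$ factor with the Tradler BV-structure $\Delta^{H}$ used in Theorem \ref{BVfiniteabelian}; that is, I must know that the Ginzburg--Menichi BV-operator $\rho_b^{-1}B\rho_b$ coincides with the dual of the Connes operator under the Frobenius identification. This is precisely Menichi's comparison result \cite{M2} applied to the symmetric algebra $R[H]$, and once it is invoked the two BV-structures on $HH^*(R[H];R[H])$ agree. With that identification in hand, Theorem \ref{BVisoCY} delivers the isomorphism $HH^*(R[G];R[G])\cong HH^*(R[\Z^n];R[\Z^n])\otimes HH^*(R[H];R[H])$ of BV-algebras together with the claimed formula $\Delta=\Delta^{\Z^n}\otimes id \pm id \otimes \Delta^{H}$, completing the proof.
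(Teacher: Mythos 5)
There is a genuine gap in your verification of the third hypothesis of Theorem \ref{BVisoCY} for the finite factor $B=R[H]$. You propose a class $b\in HH_0(R[H];R[H])$ ``whose action is the duality isomorphism,'' but for $b$ in homological degree $0$ the map $\rho_b$ sends $HH^k(B;B)$ to $HH_{-k}(B;B)$, which vanishes for $k>0$; since $HH^*(R[H];R[H])$ is nonzero in arbitrarily high degrees (it is $2$-periodic already for $H=\Z/n\Z$), no class $b$ in any degree $m$ can make $\rho_b:HH^*(B;B)\to HH_{m-*}(B;B)$ an isomorphism. The duality enjoyed by the symmetric algebra $R[H]$ is the coefficient-level isomorphism $HH^k(B;B)\cong HH_k(B;B)$ coming from $B\cong B^{\vee}$ as $B^e$-modules, not a degree-reversing cap product with a fundamental class, so the Ginzburg--Menichi framework of Theorem \ref{HHBVact} --- and hence Theorem \ref{BVisoCY}, whose third hypothesis is exactly that framework applied to both factors --- simply does not apply to the pair $(R[\Z^n],R[H])$. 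Your ``main obstacle'' of comparing $\rho_b^{-1}B\rho_b$ with the Tradler operator is therefore not an obstacle to be overcome but a sign that the wrong theorem is being invoked: the object $\rho_b^{-1}B\rho_b$ does not exist here.

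The intended argument is a hybrid. The isomorphism of graded (indeed Gerstenhaber) algebras comes from Theorem \ref{AxB} (or Corollary \ref{isoalgebras} together with the resolutions of finite type you correctly describe for $R[\Z^n]$, and the bar resolution of the finite-dimensional algebra $R[H]$). The BV-operators on the two factors are defined by different mechanisms --- the dual of the Connes operator via the Frobenius form on the symmetric factor $R[H]$ (Theorem \ref{HHBV}), and the transferred Connes operator via the fundamental class $a\in HH_n(R[\Z^n];R[\Z^n])$ on the Calabi--Yau factor (Theorem \ref{HHBVact}) --- and they are glued using Proposition \ref{TensorConnes}, which shows the Connes $B$-operator of the tensor product is $B^{A}\otimes id+id\otimes B^{B}$ under the Eilenberg--Zilber equivalence, together with Proposition \ref{tensoraction}, which shows the action on the tensor product is the tensor product of the actions. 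That combination yields $\Delta=\Delta^{\Z^n}\otimes id\pm id\otimes\Delta^{H}$ and the fact that this operator generates the tensor-product Gerstenhaber bracket. Your checks of the first two hypotheses (finite-type resolutions, $R$-projectivity of $HH^*(R[\Z^n];R[\Z^n])$, and the class $a$ with $B(a)=0$) are fine and are exactly what is needed on the $\Z^n$ side.
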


\bibliography{refs}      
\bibliographystyle{IEEEtranS}  

\end{document}